\documentclass[10pt]{article}
\usepackage{lmodern}
\usepackage{amsmath}
\usepackage[T1]{fontenc}
\usepackage[utf8]{inputenc}
\usepackage{authblk}
\usepackage{amsfonts}
\usepackage{graphicx}
\usepackage{rotating}
\usepackage{amssymb}
\usepackage[english]{babel}
\usepackage{color}
\usepackage{amsthm}
\usepackage{graphicx}
\usepackage{mathrsfs}
\usepackage{makecell}
\usepackage{microtype}
\usepackage{enumerate}
\usepackage{mathscinet}
\usepackage{array}
\usepackage{multirow}
\usepackage{booktabs}
\usepackage{enumerate}
\usepackage[cal=boondoxo,bb=ams]{mathalfa}
\usepackage{hyperref}
\hypersetup{hidelinks}
\pagestyle{myheadings}

\newtheorem{theorem}{Theorem}[section]
\newtheorem{prop}{Proposition}[section]
\newtheorem{lemma}{Lemma}[section]
\newtheorem{coro}{Corollary}[section]
\newtheorem{remark}{Remark}[section]
\newtheorem{exam}{Example}[section]

\newcommand{\ml}{\mathcal}
\newcommand{\mb}{\mathbb}

\newcommand{\R}{\mathbb{R}}

\newcommand{\ity}{\infty}

\DeclareMathOperator{\non}{non}
\DeclareMathOperator{\lin}{lin}
\DeclareMathOperator{\diag}{diag}

\DeclareMathOperator{\intt}{int}
\DeclareMathOperator{\extt}{ext}
\DeclareMathOperator{\midd}{mid}

\title{On the Cauchy problem for semilinear regularity-loss-type $\sigma$-evolution models with memory term}
\author[1]{Wenhui Chen\thanks{Corresponding author: Wenhui Chen (wenhui.chen.math@gmail.com)}}
\author[1,2]{Tuan Anh Dao}
\affil[1]{Institute of Applied Analysis, Faculty of Mathematics and Computer Science, Technical University Bergakademie Freiberg, Pr\"uferstra{\ss}e 9, 09596 Freiberg, Germany}
\affil[2]{School of Applied Mathematics and Informatics, Hanoi University of Science and Technology, No.1 Dai Co Viet road, Hanoi, Vietnam}

\setlength{\topmargin}{-10mm}
\setlength{\textwidth}{7in}
\setlength{\oddsidemargin}{-8mm}
\setlength{\textheight}{9in}
\setlength{\footskip}{1in}
\date{}
\begin{document}

\maketitle
\begin{abstract}
In this paper, we consider the Cauchy problem for semilinear $\sigma$-evolution models with an exponential decay memory term. Concerning the corresponding linear Cauchy problem, we derive some regularity-loss-type estimates of solutions and generalized diffusion phenomena. Particularly, the obtained estimations for solutions are sharper than those in the previous paper \cite{Liu-Ueda-2020}. Then, we determine the critical exponents for the semilinear Cauchy problem with power nonlinearity in some spatial dimensions by proving global (in time) existence of Sobolev solutions with low regularity of fractional orders and blow-up result for the Sobolev solutions even for any fractional value of $\sigma\geqslant 1$.\\
	
	\noindent\textbf{Keywords:} $\sigma$-evolution equation, memory effect, regularity-loss-type, critical exponent, global existence of small data solution, blow-up\\
	
	\noindent\textbf{AMS Classification (2010)} Primary: 35R11; Secondary: 35B33, 35B40, 35L30, 35L76.
\end{abstract}
\fontsize{12}{15}
\selectfont

\section{Introduction}
This paper is to devote to the following Cauchy problem for the semilinear $\sigma$-evolution models with memory term:
\begin{align}\label{Semi_Generalized_Plate_Memory}
\begin{cases}
u_{tt}+(-\Delta)^{\sigma}u+u-g\ast u=|u|^p,&x\in\mb{R}^n,\ t>0,\\
u(0,x)=0,\ u_t(0,x)=u_1(x),&x\in\mb{R}^n,
\end{cases}
\end{align}
where $\sigma\geqslant 1$, $n\geqslant 1$ and $p>1$, the time-dependent function $g=g(t)$ stands for the exponential decay memory kernel $g(t):=\mathrm{e}^{-t}$. Here, the convolution term is denoted by
\begin{align*}
(g\ast u)(t,x):=\int_0^tg(t-\tau)u(\tau,x)\mathrm{d}\tau.
\end{align*}

In the last two decades, the Cauchy problem for the evolution equations with memory terms have caught a lot of attention from many mathematicians due to their wide applications in physics, mechanics and so on. For example, concerning $\sigma=1$ in \eqref{Semi_Generalized_Plate_Memory}, it corresponds to a model of ionized atmosphere \cite{Munoz-Naso-Vuk}. 

Let us begin with introducing the linear plate equation with memory
\begin{align}\label{Liu-Ueda-Plate}
\begin{cases}
u_{tt}+\Delta^2u+u-g\ast u=0,&x\in\mb{R}^n,\ t>0,\\
u(0,x)=u_0(x),\ u_t(0,x)=u_1(x),&x\in\mb{R}^n,
\end{cases}
\end{align} 
where the positive memory kernel $g\in\ml{C}^2([0,\infty))$ fulfilling $-C_1g(t)\leqslant g'(t)\leqslant -C_2g(t)$, $|g''(t)|\leqslant C_3g(t)$ and $1-\int_0^tg(\tau)\mathrm{d}\tau\geqslant0$ for any $t\geqslant0$ and some positive constants $C_1,C_2,C_3$. One may find that the above assumptions lead to 
\begin{align}\label{Liu-Ueda-Memory-kernel}
0<g(0)\,\mathrm{e}^{-C_1t}\leqslant g(t)\leqslant g(0)\,\mathrm{e}^{-C_2t}.
\end{align}
The authors of \cite{Liu-Ueda-2020} derived some estimates for solutions by using energy methods in the Fourier space. They found the decay structure of the regularity-loss-type, which plays an important role in some physical models including Timoshenko system, Euler-Maxwell system, thermoelastic plate equations and Moore-Gibson-Thompson equations (see, for example,  \cite{Ueda-Duan-Kawashima,Ueda-Racke2016,Pellicer-Said2019}). Roughly speaking, the term $u-g\ast u$ generates a weak dissipative effect. Furthermore, by applying spectral representation of solutions and asymptotic expansions of eigenvalues and their corresponding eigenprojections, the authors \cite{Liu-Ueda-2020} proved the sharpness of estimations for a special unknown variable (see (3.2) in \cite{Liu-Ueda-2020}), and found approximate equations to derive asymptotic profiles of solutions in a framework of weighted $L^1$ space. For the sake of the assumption \eqref{Liu-Ueda-Memory-kernel} and motivated by the model \eqref{Liu-Ueda-Plate}, it seems reasonable to consider an exponential decay kernel in generalized plate (or $\sigma$-evolution) models \eqref{Semi_Generalized_Plate_Memory} for any $\sigma\geqslant 1$. However, we cannot simply generalize the approach of \cite{Liu-Ueda-2020}) to this paper, which will be shown later. Let us now consider the semilinear plate equation with memory term, where the nonlinear term on the right-hand side depends on $u$ only, namely,
\begin{align}\label{Liu-Kawashima-Semilinear-Plate}
\begin{cases}
u_{tt}+\Delta^2u+u+g\ast\Delta u=f(u),&x\in\mb{R}^n,\ t>0,\\
u(0,x)=u_0(x),\ u_t(0,x)=u_1(x),&x\in\mb{R}^n.
\end{cases}
\end{align} 
In the recent paper \cite{Liu-Kawashima-2011}, the authors proved global (in time) existence of small data Sobolev solutions with high regularity such that
\begin{align*}
u\in\ml{C}\left([0,\infty),H^{s+1}(\mb{R}^n)\right)
\end{align*}
by using embedding theory and contraction mapping theorem, where the regularity-loss effect appears due to the corresponding linear model of \eqref{Liu-Kawashima-Semilinear-Plate}. Here, $s$ is an integer number satisfying $s\geqslant2$ for $n=1$, and $s\geqslant\lceil n/2\rceil$ for $n\geqslant2$. It seems interesting to develop global (in time) existence results with lower regularity of fractional orders. Concerning other studies on the Cauchy problem for plate equations with memory terms, we refer to \cite{Sugitani-Kawashima-2010,Liu-Kawashima-2011-2,Liu-2012,Mao-Liu-2014,Mao-Liu-2017} and the references therein.

Let us turn to the $\sigma$-evolution equations with friction $u_t$ as follows:
\begin{align}\label{Duong-Rei-friction}
\begin{cases}
u_{tt}+(-\Delta)^{\sigma}u+u_t=|u|^p,&x\in\mb{R}^n,\ t>0,\\
u(0,x)=u_0(x),\ u_t(0,x)=u_1(x),&x\in\mb{R}^n.
\end{cases}
\end{align}
When $\sigma$ is an integer number, \eqref{Duong-Rei-friction} is the so-called polyharmonic damped wave equation which was explored in \cite{Takeda}. The author has found a critical condition to catch the global (in time) existence of small data solutions to \eqref{Duong-Rei-friction} in the space dimension $n=1$. One recognizes that the restriction of the one-dimensional case causes by some technical difficulties. Involving the general case of any fractional number $\sigma \geqslant 1$, the authors in the recent paper \cite{Duong-Reissig-2017} applied some of sharp decay estimates for solutions, which were well-studied in \cite{RaduTodorovaYordanov}, to get the possible range of the admissible exponents of power nonlinearity for \eqref{Duong-Rei-friction}. The point worth noticing is that by proving a diffusion phenomenon in the abstract setting, the authors in \cite{RaduTodorovaYordanov} have achieved these sharp decay estimates thanks to the application of the Nash inequality and the Markov property for the parabolic semigroup. However, we should remark that it is challenging to follow the previous studies \cite{Duong-Reissig-2017,RaduTodorovaYordanov,Takeda} in the way of directly computing the eigenvalues and representations of solutions since the memory term in \eqref{Semi_Generalized_Plate_Memory} brings difficulties. Moreover, we will observe later in Remark \ref{Compare-Duong-Reissig} that friction $u_t$ has a stronger damping effect than that generated by the memory term $u-g\ast u$, so how does this memory-type damping influence on the critical exponent for \eqref{Semi_Generalized_Plate_Memory} is still unknown. We will give a positive answer of this question in the present paper. For other studies on $\sigma$-evolution equations with damping term, we refer the interested readers to \cite{D'Ab-Ebert-2017,DaoReissig1,DaoReissig2,D'Ab-Gir-Lia,D'Ab-Gir-2020}.

Our main purpose of this paper is to investigate some qualitative properties of solutions to the corresponding linear and the semilinear $\sigma$-evolution models \eqref{Semi_Generalized_Plate_Memory} with an exponential decay memory term. Particularly, we are interested in the influence of a weak dissipation generated by the term containing $u-g\ast u$ on the $\sigma$-evolution models for any $\sigma\geqslant1$.

Concerning the linearized Cauchy problem for \eqref{Semi_Generalized_Plate_Memory}, we prove some sharper estimates for solutions based on the $L^2$ norm, which are of the regularity-loss-type. In additional, we find two different approximate equations to the linear problem (see \eqref{Ref_Eq_Small} and \eqref{Ref_Eq_Large} later). Nevertheless, the studies of these properties, especially estimates for solutions, are not a simple generalization of those in previous studies. For one thing, due to the non-local operator $(-\Delta)^{\sigma}$ with $\sigma\geqslant1$ in \eqref{Linear_Generalized_Plate_Memory} for all dimensions, it seems that the approach, i.e. spectral theory associated with asymptotic expansions, in \cite{Liu-Ueda-2020} does not work well in our model, and WKB analysis associated with diagonalization procedures is effective to deal with the fractional power operator. For another, by some calculations, we derive the estimates for solutions and their time-derivative in the Fourier space independently, which provides us an effective way to get sharper estimates for solutions than those shown in \cite{Liu-Ueda-2020}.

Regarding the semilinear Cauchy problem \eqref{Semi_Generalized_Plate_Memory}, the critical exponents
\begin{align*}
p=p_{\mathrm{crit}}(n,m,\sigma)
\end{align*}
by assuming $L^m$ regular data with $m\in[1,2)$ for some values of $n,\sigma$ will be found. On the one hand, global (in time) small data Sobolev solutions with lower regularity in the supercritical case $p>p_{\mathrm{crit}}(n,m,\sigma)$ for some values of $n,m,\sigma$ will be shown by constructing time-weighted Sobolev space and employing the fractional Gagliardo-Nirenberg inequality combined with the fractional Leibniz rule and the fractional chain rule. Up to now, it seems that we do not have any previous research manuscripts to study lower regular global (in time) Sobolev solutions in the framework of regularity-loss. One the other hand, in the subcritical case $1<p<p_{\mathrm{crit}}(n,m,\sigma)$ for all $n\geqslant 1$, $m\in[1,2)$ and $\sigma\geqslant1$, we will prove that every local (in time) Sobolev solution blows up by a modified test function method to deal with the fractional power operator $(-\Delta)^{\sigma}$, which is motivated by the quite recent paper \cite{Dao-Reissig-2020}. To deal with the memory effect, we construct two different functionals related to the solution itself. Especially, when $m=1$ and $\sigma$ is assumed to be an integer number, every weak solution will blows up in the critical case $p=p_{\mathrm{crit}}(n,1,\sigma)$. 

The remaining part of this paper is organized as follows. We prepare some sharp regularity-loss-type estimates for solutions to the corresponding linear model of \eqref{Semi_Generalized_Plate_Memory} by using WKB analysis and multi-step diagonalization procedures in Section \ref{Section_Linear_Problem}. Additionally, the generalized diffusion phenomena also will be shown in the end of Section \ref{Section_Linear_Problem}. Then, by using the derived estimates and some tools in Harmonic Analysis, in Section \ref{Section_GESDS} we prove global (in time) existence of small data Sobolev solutions with lower regularity. To conclude the optimility, we employ a modified test function method and construct a contradiction argument to prove blow-up result for the Sobolev solutions in Section \ref{Section_Blowup}. Eventually, final remarks in Section \ref{Section_Finalremark} are to complete this paper.

\medskip

\noindent\textbf{Notations:} In this paper, $f\lesssim g$ means that there exists a positive constant $C$ such that $f\leqslant Cg$. Also, we denote $\lceil r\rceil:=\min\{C\in\mb{Z}: r\leqslant C\}$ and $[r]$ as the ceiling function and the integer part of $r \in \mb{R}$, respectively. We define the so-called Japanese bracket $\langle x\rangle:=(1+|x|^2)^{1/2}$ for all $x\in\mb{R}^n$.  Moreover, $\dot{H}^s_q(\mb{R}^n)$ with $s\geqslant0$ and $1\leqslant q<\infty$, denote Riesz potential spaces based on $L^q(\mb{R}^n)$. Furthermore, $|D|^s$ with $s\geqslant0$ stand for the pseudo-differential operators with symbol $|\xi|^s$. We denote the identity matrix of dimensions $k\times k$ by $I_{k}$. Finally, let us define the following zones for the phase space:
\begin{align*}
Z_{\intt}(\varepsilon)&:=\left\{\xi\in\mb{R}^n:|\xi|<\varepsilon\ll1\right\},\\
Z_{\midd}(\varepsilon,N)&:=\left\{\xi\in\mb{R}^n:\varepsilon\leqslant |\xi|\leqslant N\right\},\\
Z_{\extt}(N)&:=\left\{\xi\in\mb{R}^n:|\xi|> N\gg1\right\}.
\end{align*}
The cut-off functions $\chi_{\intt}(\xi),\chi_{\midd}(\xi),\chi_{\extt}(\xi)\in \mathcal{C}^{\infty}(\mb{R}^n)$ having their supports in the zone $Z_{\intt}(\varepsilon)$, $Z_{\midd}(\varepsilon/2,2N)$ and $Z_{\extt}(N)$, respectively, so that $\chi_{\intt}(\xi)+\chi_{\midd}(\xi)+\chi_{\extt}(\xi)=1$ for all $\xi \in \mb{R}^n$.

\section{Qualitative properties of solutions to the linear Cauchy problem}\label{Section_Linear_Problem}
\setcounter{equation}{0}
Our starting point in this paper is to study the corresponding linear Cauchy problem for \eqref{Semi_Generalized_Plate_Memory}, namely,
\begin{align}\label{Linear_Generalized_Plate_Memory}
\begin{cases}
u_{tt}+(-\Delta)^{\sigma}u+u-g\ast u=0,&x\in\mb{R}^n,\ t>0,\\
u(0,x)=0,\ u_t(0,x)=u_1(x),&x\in\mb{R}^n,
\end{cases}
\end{align}
where $\sigma\geqslant 1$, $n\geqslant 1$ and $g(t)=\mathrm{e}^{-t}$.

In this section, we will derive some qualitative properties of solutions to \eqref{Linear_Generalized_Plate_Memory}, which will be used later. Precisely, some estimates for solutions in the $\dot{H}^s$  norm with $s\geqslant0$ and generalized diffusion phenomena are of our interest. Again, the approach, i.e. spectral theory associated with asymptotic expansions of eigenvalues and their corresponding eigenprojections, does not work well in our model \eqref{Linear_Generalized_Plate_Memory} due to the non-local operator $(-\Delta)^{\sigma}$ with any $\sigma\geqslant 1$. Therefore, to overcome the difficulty for the fractional case, motivated by some recent papers studying non-local operators in coupled systems (see, for instance, \cite{Reissig-2016,Chen-Reissig-2019,Chen-CMS,Chen-JMAA}), we may employ WKB analysis and multi-step diagonalization procedures (see, for example, \cite{Yagdjian-1997,Reissig-Wang-2005,Jachmann-2008}).

Before applying these techniques, we will reduce \eqref{Linear_Generalized_Plate_Memory} to a suitable evolution system, which is the first-order system with respect to time variable. Observing from the exponential memory kernel having the property
\begin{align}\label{Eq_derivative_g*u}
\frac{\partial}{\partial t}(g\ast u)(t,x)=u(t,x)-(g\ast u)(t,x),
\end{align}
we are motivated to define a suitable ansatz $U=U(t,x)$ to \eqref{Linear_Generalized_Plate_Memory} such that
\begin{align}\label{Formu_U}
U(t,x):=\left(u_t(t,x)+i(-\Delta)^{\sigma/2}u(t,x),u_t(t,x)-i(-\Delta)^{\sigma/2}u(t,x),(g\ast u)(t,x)-u(t,x)\right)^{\mathrm{T}}
\end{align}
carrying its corresponding initial data
\begin{align}\label{Formu_U0}
U_0(x):=U(0,x)=(u_1(x),u_1(x),0)^{\mathrm{T}}.
\end{align}
By using the differential property \eqref{Eq_derivative_g*u}, one may reduce \eqref{Linear_Generalized_Plate_Memory} to
\begin{align}\label{Eq_First_Order_x}
\begin{cases}
U_t-A_0(-\Delta)^{\sigma/2}U-A_1U=0,&x\in\mb{R}^n,\ t>0,\\
U(0,x)=U_0(x),&x\in\mb{R}^n,
\end{cases}
\end{align}
where the coefficient matrices are given by
\begin{align*}
A_0=\left(
{\begin{array}{*{20}c}
	i & 0  &  0 \\
	0  & -i &  0\\
	0 & 0  & 0 \\
	\end{array}}
\right)\ \ \mbox{and}\ \ A_1=\left(
{\begin{array}{*{20}c}
	0 & 0  & 1 \\
	0  & 0 &  1 \\
	-\frac{1}{2} & -\frac{1}{2}  & -1 \\
	\end{array}}
\right).
\end{align*}
Let us apply the partial Fourier transform $\hat{U}(t,\xi)=\ml{F}_{x\to \xi}(U(t,x))$ in \eqref{Eq_First_Order_x}. Then, the Cauchy problem for the first-order $|\xi|$-dependent system reads as follows:
\begin{align}\label{Eq_First_Order_xi}
\begin{cases}
\hat{U}_t-A_0|\xi|^{\sigma}\hat{U}-A_1\hat{U}=0,&\xi\in\mb{R}^n,\ t>0,\\
\hat{U}(0,\xi)=\hat{U}_0(\xi),&\xi\in\mb{R}^n.
\end{cases}
\end{align}
With the aim of understanding the influence of the parameter $|\xi|$, we will discuss the asymptotic behavior of solutions into three parts containing small frequencies $\xi\in Z_{\intt}(\varepsilon)$, large frequencies $\xi\in Z_{\extt}(N)$ and middle frequencies $\xi\in Z_{\midd}(\varepsilon,N)$, individually. 

\subsection{Representation of solutions} \label{Rep.Sol}
In this part, we will study representations of solutions to \eqref{Eq_First_Order_xi}. Particularly, we can apply diagonalization procedure to derive representation of solutions for $\xi\in Z_{\intt}(\varepsilon)\cup Z_{\extt}(N)$, and establish a contradiction argument to get an exponential stability for $\xi\in Z_{\midd}(\varepsilon,N)$. Actually, we found that the Cauchy problem \eqref{Eq_First_Order_x} is a special case of the so-called $\alpha-\beta$ system, which can be shown by
\begin{align*}
\begin{cases}
u_{tt}+\ml{A}u-\gamma_1\ml{A}^{\alpha}v=0,&x\in\mb{R}^n,\ t>0,\\
v_t+\gamma_2\ml{A}^{\beta}v+\gamma_1\ml{A}^{\alpha}u_t=0,&x\in\mb{R}^n,\ t>0,\\
u(0,x)=u_0(x),\ u_t(0,x)=u_1(x),\ v(0,x)=v_0(x),&x\in\mb{R}^n,
\end{cases}
\end{align*}
if we use a suitable change of variable (see (2.1) in \cite{Liu-Chen-2020}) and take $\alpha=\beta=0$, $\gamma_1=\gamma_2=1$, $\ml{A}=(-\Delta)^{\sigma}$ with $\sigma\geqslant 1$. Therefore, we may directly apply the results studied in \cite{Liu-Chen-2020}. More precisely, the following three propositions are derived in Propositions 2.1, 2.2, 2.4 in \cite{Liu-Chen-2020} with $\alpha=0$, respectively.
\begin{prop}\label{Prop_Repre_Small}
	The eigenvalues $\lambda_j=\lambda_j(|\xi|)$ for $j=1,2,3$, of the coefficient matrix $A_0|\xi|^{\sigma}+A_1$ from \eqref{Eq_First_Order_xi} behave for $|\xi|<\varepsilon\ll 1$ as follows:
	\begin{align*}
	\lambda_1(|\xi|)&=-|\xi|^{2\sigma}+\ml{O}(|\xi|^{3\sigma}),\\
	\lambda_2(|\xi|)&=-\left(\tfrac{1}{2}+i\tfrac{\sqrt{3}}{2}\right)+\left(\tfrac{1}{2}-i\tfrac{\sqrt{3}}{6}\right)|\xi|^{2\sigma}+\ml{O}(|\xi|^{3\sigma}),\\
	\lambda_3(|\xi|)&=-\left(\tfrac{1}{2}-i\tfrac{\sqrt{3}}{2}\right)+\left(\tfrac{1}{2}+i\tfrac{\sqrt{3}}{6}\right)|\xi|^{2\sigma}+\ml{O}(|\xi|^{3\sigma}).
	\end{align*}
	Furthermore, the solution to \eqref{Eq_First_Order_xi} has for $\xi\in Z_{\intt}(\varepsilon)$ the representation
	\begin{align*}
	\hat{U}(t,\xi)=T_{\intt}\diag\left(\mathrm{e}^{\lambda_1(|\xi|)t},\mathrm{e}^{\lambda_2(|\xi|)t},\mathrm{e}^{\lambda_3(|\xi|)t}\right)T_{\intt}^{-1}\hat{U}_0(\xi),
	\end{align*}
	where $T_{\intt}:=N_1(I_{3}+N_2(|\xi|))(I_3+N_3(|\xi|))$ and
	\begin{align*}
	N_1:=\left(
	{\begin{array}{*{20}c}
		-1 & \frac{-1+i\sqrt{3}}{2} & -\frac{1+i\sqrt{3}}{2}\\
		1 & \frac{-1+i\sqrt{3}}{2} & -\frac{1+i\sqrt{3}}{2}\\
		0 & 1 & 1
		\end{array}}
	\right)&,\ \ N_2(|\xi|):=|\xi|^{\sigma}\left(
	{\begin{array}{*{20}c}
		0 & -\frac{\sqrt{3}+i}{1+i\sqrt{3}} & \frac{-\sqrt{3}+i}{-1+i\sqrt{3}}\\
		-\frac{2\sqrt{3}}{3(1+i\sqrt{3})} & 0 & 0\\
		\frac{2\sqrt{3}}{3(1-i\sqrt{3})} & 0 & 0
		\end{array}}
	\right),\\
	N_3(|\xi|):&=|\xi|^{2\sigma}\left(
	{\begin{array}{*{20}c}
		0 & 0 & 0\\
		0 & 0 & \frac{-1+i\sqrt{3}}{6}\\
		0 & -\frac{1+i\sqrt{3}}{6} & 0
		\end{array}}
	\right).
	\end{align*} 
\end{prop}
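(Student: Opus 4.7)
The plan is to carry out an explicit multi-step diagonalization of the symbol $M(|\xi|) := A_0|\xi|^{\sigma}+A_1$, specialising the general WKB scheme of Liu--Chen [Liu-Chen-2020] to the present $\alpha$-$\beta$ system with $\alpha=\beta=0$, $\gamma_1=\gamma_2=1$ and $\mathcal{A}=(-\Delta)^{\sigma}$. The idea is that in the small-frequency zone the leading-order matrix is $A_1$ (which has simple eigenvalues) and the perturbation $A_0|\xi|^{\sigma}$ is of small amplitude, so standard perturbation theory for simple eigenvalues applies and the diagonalization can be performed one order of $|\xi|^{\sigma}$ at a time.

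First I would treat the zeroth order. A direct expansion of $\det(A_1-\lambda I_3)=-\lambda(\lambda^{2}+\lambda+1)$ gives the roots $\mu_1=0$ and $\mu_{2,3}=-\tfrac12\mp i\tfrac{\sqrt3}{2}$, matching exactly the leading terms of $\lambda_1,\lambda_2,\lambda_3$. The columns of $N_1$ displayed in the statement are the corresponding right eigenvectors, so the similarity transform produces
\begin{equation*}
N_1^{-1}M(|\xi|)N_1 \;=\; D_0 + |\xi|^{\sigma}B, \qquad D_0:=\diag(\mu_1,\mu_2,\mu_3),\; B:=N_1^{-1}A_0N_1 .
\end{equation*}
A $3\times 3$ matrix multiplication confirms that $B$ has vanishing diagonal; this is precisely what forces the $|\xi|^{\sigma}$ correction to every eigenvalue to be zero, explaining why the first non-trivial correction in $\lambda_j(|\xi|)$ already occurs at order $|\xi|^{2\sigma}$.

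Next I would perform two successive Sylvester-type steps. Writing $T_1=I_3+|\xi|^{\sigma}\tilde N_2$ with $\tilde N_2$ strictly off-diagonal and expanding $T_1^{-1}(D_0+|\xi|^{\sigma}B)T_1$, the coefficient of $|\xi|^{\sigma}$ becomes $B+[\tilde N_2,D_0]$. Because $D_0$ has simple spectrum, the off-diagonal equation $[D_0,\tilde N_2]=B$ admits the unique solution $\tilde N_{2,ij}=B_{ij}/(\mu_i-\mu_j)$ for $i\neq j$; plugging in the entries of $B$ reproduces the explicit matrix $N_2(|\xi|)=|\xi|^{\sigma}\tilde N_2$ displayed in the statement. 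After this step one is left with $D_0+|\xi|^{2\sigma}C+\mathcal O(|\xi|^{3\sigma})$ for a new matrix $C$, and a second transformation $T_2=I_3+|\xi|^{2\sigma}\tilde N_3$ with $\tilde N_3$ the solution of the analogous commutator equation $[D_0,\tilde N_3]=C_{\mathrm{off}}$ produces exactly the matrix $N_3(|\xi|)$ in the statement, leaving on the diagonal the correction $\diag(-1,\tfrac12-i\tfrac{\sqrt3}{6},\tfrac12+i\tfrac{\sqrt3}{6})$. Reading off the diagonal then yields the stated asymptotic expansions of $\lambda_1,\lambda_2,\lambda_3$.

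Finally, since the fully diagonalized problem has scalar exponential solutions, undoing the three similarity transforms with $T_{\intt}:=N_1(I_3+N_2(|\xi|))(I_3+N_3(|\xi|))$ gives the representation formula for $\hat U(t,\xi)$. The main obstacle is book-keeping: one must verify algebraically that $N_1^{-1}A_0N_1$ has zero diagonal, and that the commutator equations at orders $|\xi|^{\sigma}$ and $|\xi|^{2\sigma}$ produce exactly the entries listed for $N_2$ and $N_3$. These are elementary but error-prone $3\times 3$ computations; in practice I would simply specialize the general formulas from Liu--Chen [Liu-Chen-2020], where this bookkeeping has been worked out for arbitrary $\alpha,\beta$, to the parameters of the present model.
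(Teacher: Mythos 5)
Your route is, in substance, the one this paper intends but does not write out: the paper observes that \eqref{Eq_First_Order_x} is an $\alpha$--$\beta$ system in the sense of \cite{Liu-Chen-2020} with $\alpha=\beta=0$, $\gamma_1=\gamma_2=1$, $\ml{A}=(-\Delta)^{\sigma}$, and simply imports Proposition 2.1 of that reference; your explicit multi-step diagonalization is exactly the procedure announced in Section \ref{Section_Linear_Problem} and carried out there. Your structural checks are correct: $\det(A_1-\lambda I_3)=-\lambda(\lambda^2+\lambda+1)$, the columns of $N_1$ are the associated right eigenvectors, and $B=N_1^{-1}A_0N_1$ does have vanishing diagonal (the left eigenvector for the eigenvalue $0$ is proportional to $(1,-1,0)$ and annihilates $A_0v_1$, while the left eigenvectors for $\mu_{2},\mu_3$ have equal first and second components), which is the reason no $|\xi|^{\sigma}$ term appears in the expansions of the $\lambda_j$.

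Two points need tightening. First, a sign: expanding $T_1^{-1}(D_0+|\xi|^{\sigma}B)T_1$ gives the order-$|\xi|^{\sigma}$ coefficient $B+[D_0,\tilde{N}_2]$, so the equation to solve is $[D_0,\tilde{N}_2]=-B$, i.e. $\tilde{N}_{2,ij}=B_{ij}/(\mu_j-\mu_i)$ rather than $B_{ij}/(\mu_i-\mu_j)$; for instance $B_{12}=\tfrac{\sqrt{3}+i}{2}$ and $B_{12}/(\mu_2-\mu_1)=-\tfrac{\sqrt{3}+i}{1+i\sqrt{3}}$, which is precisely the displayed $(1,2)$ entry of $N_2(|\xi|)/|\xi|^{\sigma}$, so the stated matrices conform to the corrected sign. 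Second, after the two Sylvester steps the conjugated matrix is diagonal only modulo an $\ml{O}(|\xi|^{3\sigma})$ remainder, so the exact representation $\hat{U}(t,\xi)=T_{\intt}\diag(\mathrm{e}^{\lambda_j(|\xi|)t})T_{\intt}^{-1}\hat{U}_0(\xi)$ with the true eigenvalues does not follow from ``the fully diagonalized problem has scalar exponential solutions'' alone; one must add that for $|\xi|<\varepsilon$ the three eigenvalues are pairwise distinct, hence $A_0|\xi|^{\sigma}+A_1$ is exactly diagonalizable and the exact diagonalizer agrees with $T_{\intt}$ up to corrections already covered by the $\ml{O}$-terms (equivalently, continue the scheme to all orders). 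This is the level of bookkeeping that \cite{Liu-Chen-2020}, to which the paper defers, supplies.
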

\begin{prop}\label{Prop_Repre_Large}
	The eigenvalues $\lambda_j=\lambda_j(|\xi|)$ for $j=1,2,3$, of the coefficient matrix $A_0|\xi|^{\sigma}+A_1$ from \eqref{Eq_First_Order_xi} behave for $|\xi|>N\gg 1$ as follows:
	\begin{align*}
	\lambda_1(|\xi|)&=i|\xi|^{\sigma}+\tfrac{i}{2}|\xi|^{-\sigma}-\tfrac{1}{2}|\xi|^{-2\sigma}+\ml{O}(|\xi|^{-3\sigma}),\\
	\lambda_2(|\xi|)&=-i|\xi|^{\sigma}-\tfrac{i}{2}|\xi|^{-\sigma}-\tfrac{1}{2}|\xi|^{-2\sigma}+\ml{O}(|\xi|^{-3\sigma}),\\
	\lambda_3(|\xi|)&=-1+|\xi|^{-2\sigma}+\ml{O}(|\xi|^{-3\sigma}).
	\end{align*}
	Furthermore, the solution to \eqref{Eq_First_Order_xi} has for $\xi\in Z_{\extt}(N)$ the representation
	\begin{align*}
	\hat{U}(t,\xi)=T_{\extt}\diag\left(\mathrm{e}^{\lambda_1(|\xi|)t},\mathrm{e}^{\lambda_2(|\xi|)t},\mathrm{e}^{\lambda_3(|\xi|)t}\right)T_{\extt}^{-1}\hat{U}_0(\xi),
	\end{align*}
	where $T_{\extt}:=(I_{3}+N_4(|\xi|))(I_{3}+N_5(|\xi|))(I_{3}+N_6(|\xi|))$ and
	\begin{align*}
	N_4(|\xi|):=|\xi|^{-\sigma}\left(
	{\begin{array}{*{20}c}
		0 & 0 & i\\
		0 & 0 & -i\\
		\frac{i}{2} & -\frac{i}{2} & 0
		\end{array}}
	\right)&,\ \ N_5(|\xi|):=|\xi|^{-2\sigma}\left(
	{\begin{array}{*{20}c}
		0 & \frac{1}{4} & -1\\
		\frac{1}{4} & 0 & -1\\
		-\frac{1}{2} & -\frac{1}{2} & 0
		\end{array}}
	\right),\\
	N_6(|\xi|):=|\xi|^{-3\sigma}&\left(
	{\begin{array}{*{20}c}
		0 & \frac{i}{4} & -i\\
		-\frac{i}{4} & 0 & i\\
		-\frac{i}{2} & \frac{i}{2} & 0
		\end{array}}
	\right).
	\end{align*}
\end{prop}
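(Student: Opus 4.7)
The plan is to apply the classical multi-step diagonalization procedure from WKB analysis to the symbol $M(|\xi|) := A_0|\xi|^{\sigma} + A_1$ of the system \eqref{Eq_First_Order_xi}. The key observation is that $A_0 = \diag(i,-i,0)$ has three pairwise distinct entries, so for $|\xi| > N \gg 1$ the full diagonal of $M(|\xi|)$, namely $(i|\xi|^{\sigma},\,-i|\xi|^{\sigma},\,-1)$, is uniformly well-separated. Hence the remaining off-diagonal part of $A_1$ can be eliminated order by order as a perturbation in powers of $|\xi|^{-\sigma}$.

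For the first step I look for a transformation $T_1 = I_3 + N_4(|\xi|)$ with $N_4(|\xi|) = |\xi|^{-\sigma} Q_1$ for some constant matrix $Q_1$. Computing $T_1^{-1}M T_1$ by Neumann expansion and collecting the zeroth-order terms, I need $[A_0, Q_1]$ to cancel the off-diagonal part of $A_1$. Because $A_0$ has distinct eigenvalues $d_1,d_2,d_3$, the commutator equation is solvable component-wise with $(Q_1)_{ij} = (A_1)_{ij}/(d_j - d_i)$ for $i\neq j$ and zero on the diagonal, and a direct computation produces precisely the matrix $N_4(|\xi|)$ stated. After conjugation, the transformed symbol is diagonal up to a remainder of order $|\xi|^{-\sigma}$.

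Iterating the procedure, a second conjugation by $T_2 = I_3 + N_5(|\xi|)$ with $N_5$ of order $|\xi|^{-2\sigma}$ kills the next off-diagonal remainder, and a third conjugation by $T_3 = I_3 + N_6(|\xi|)$ of order $|\xi|^{-3\sigma}$ removes the following one. Each $N_k$ is determined by an analogous commutator equation whose right-hand side is the current off-diagonal remainder, which is explicit because the previous steps are known. The composed transformation $T_{\extt} := (I_3 + N_4)(I_3 + N_5)(I_3 + N_6)$ is invertible for $|\xi| > N \gg 1$ by a Neumann series argument (since each $N_k \to 0$ as $|\xi|\to\infty$), and conjugation by $T_{\extt}$ brings $M(|\xi|)$ to diagonal form up to an $\ml{O}(|\xi|^{-4\sigma})$ error that is absorbed into the remainders of the eigenvalue expansions. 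Reading off the diagonal entries then gives the stated asymptotics for $\lambda_1, \lambda_2, \lambda_3$, and solving the decoupled scalar ODEs $\hat{v}_t = \lambda_j \hat{v}$ yields the representation formula.

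The main obstacle is the careful bookkeeping at each diagonalization step: the subleading terms $\tfrac{i}{2}|\xi|^{-\sigma}$ and $-\tfrac{1}{2}|\xi|^{-2\sigma}$ in $\lambda_{1,2}$ and the correction $|\xi|^{-2\sigma}$ in $\lambda_3$ do not come from a single step but are produced by the combined contributions of the diagonal part of $A_1$, the commutators $[A_0,N_5], [A_0,N_6]$, and the cross products $N_4 A_1$, $A_1 N_4$, $N_4^2$, etc., that feed into higher-order diagonal entries. Since $\sigma\geqslant 1$ is an arbitrary fractional parameter, one must treat $|\xi|^{-\sigma}$ as a formal small parameter throughout and cannot exploit any integer-power structure; consistency of the cancellations across the three steps is where the bulk of the computational work lies.
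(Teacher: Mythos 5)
Your approach is exactly the one the paper intends: the paper does not prove Proposition \ref{Prop_Repre_Large} directly but observes that \eqref{Eq_First_Order_x} is an $\alpha$--$\beta$ system with $\alpha=\beta=0$ and imports the statement from Propositions 2.1--2.4 of \cite{Liu-Chen-2020}, whose proofs are precisely the multi-step diagonalization you describe. Your first step checks out: with $d_1=i$, $d_2=-i$, $d_3=0$ the formula $(Q_1)_{ij}=(A_1)_{ij}/(d_j-d_i)$ reproduces the stated $N_4$ entry by entry, and the diagonal of $BQ_1$ (with $B$ the off-diagonal part of $A_1$) gives $(\tfrac{i}{2},-\tfrac{i}{2},0)$, which is where the $\pm\tfrac{i}{2}|\xi|^{-\sigma}$ corrections in $\lambda_{1,2}$ and the absence of such a term in $\lambda_3$ come from, consistent with your bookkeeping remarks.

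There is one step that, as written, does not work: after three conjugations the transformed symbol is diagonal only up to an \emph{off-diagonal} remainder of size $\ml{O}(|\xi|^{-4\sigma})$, and such a remainder cannot be ``absorbed into the remainders of the eigenvalue expansions'' --- the eigenvalue remainders are scalar corrections to the diagonal, while the obstruction to ``solving the decoupled scalar ODEs'' is the residual coupling between components. You need one further argument to close this: since the three diagonal entries $i|\xi|^{\sigma}$, $-i|\xi|^{\sigma}$, $-1$ remain pairwise separated by a distance bounded below (in fact growing) for $|\xi|>N\gg1$, the perturbed matrix ``diagonal $+\,\ml{O}(|\xi|^{-4\sigma})$'' still has three distinct eigenvalues and is exactly diagonalized by a matrix $I_3+\ml{O}(|\xi|^{-4\sigma})$; composing this final exact diagonalizer with your three explicit steps yields an exact representation $\hat{U}=T\diag(\mathrm{e}^{\lambda_j t})T^{-1}\hat{U}_0$ with $T=T_{\extt}(I_3+\ml{O}(|\xi|^{-4\sigma}))$ and with the true eigenvalues $\lambda_j$ satisfying the stated asymptotics. (Alternatively one treats the residual coupling by Duhamel and Gronwall.) This is the standard closing move in the WKB diagonalization scheme and is what the cited reference relies on; without it the claimed exact representation formula and the passage to scalar ODEs are unjustified.
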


\begin{prop}\label{Prop_Middle_Freq}
	The solution $\hat{U}=\hat{U}(t,\xi)$ to \eqref{Eq_First_Order_xi} fulfills the next estimate for any $\xi\in Z_{\midd}(\varepsilon,N)$:
	\begin{align*}
	|\hat{U}(t,\xi)|\lesssim \mathrm{e}^{-ct}|\hat{U}_0(\xi)|
	\end{align*}
	with a positive constant $c$ and for any $t \geqslant 0$.
\end{prop}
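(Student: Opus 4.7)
The plan is to exploit the compactness of the middle zone $Z_{\midd}(\varepsilon,N)$ together with continuous dependence of eigenvalues on parameters. Writing the system \eqref{Eq_First_Order_xi} as $\hat{U}_t = M(\xi)\hat{U}$ with $M(\xi):=A_0|\xi|^\sigma + A_1$, the desired bound will follow once we establish that every eigenvalue of $M(\xi)$ has real part at most $-c_0<0$ uniformly for $\xi \in Z_{\midd}(\varepsilon,N)$, since then $\hat{U}(t,\xi) = \mathrm{e}^{tM(\xi)}\hat{U}_0(\xi)$ inherits an exponential bound (possibly after slightly reducing $c_0$ to absorb polynomial factors arising from Jordan blocks).

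The contradiction will live at the level of the characteristic polynomial. A direct computation of $\det(\lambda I - M(\xi))$ yields
\begin{align*}
P(\lambda;|\xi|) = \lambda^3 + \lambda^2 + (1 + |\xi|^{2\sigma})\lambda + |\xi|^{2\sigma}.
\end{align*}
Suppose toward contradiction that at some $\xi \in Z_{\midd}(\varepsilon,N)$ one has an eigenvalue of the form $\lambda = i\omega$ with $\omega \in \mb{R}$. Substituting into $P(i\omega;|\xi|) = 0$ and separating real and imaginary parts produces
\begin{align*}
-\omega^2 + |\xi|^{2\sigma} = 0, \qquad \omega\bigl(1 + |\xi|^{2\sigma} - \omega^2\bigr) = 0.
\end{align*}
The case $\omega = 0$ forces $|\xi|=0$, contradicting $|\xi|\geqslant\varepsilon>0$; otherwise the two identities demand simultaneously $\omega^2 = |\xi|^{2\sigma}$ and $\omega^2 = 1+|\xi|^{2\sigma}$, which is impossible. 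Hence $M(\xi)$ has no purely imaginary eigenvalue at any point of the middle zone.

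Combining this with the asymptotic expansions in Propositions \ref{Prop_Repre_Small} and \ref{Prop_Repre_Large}, which place all three eigenvalues strictly in the open left half-plane for $|\xi|$ near $\varepsilon$ and near $N$ respectively, and using continuity of the map $\xi \mapsto \lambda_j(|\xi|)$, no eigenvalue can migrate across the imaginary axis inside $Z_{\midd}(\varepsilon,N)$. Compactness then furnishes a uniform constant $c_0>0$ with $\max_{j=1,2,3}\mathrm{Re}\,\lambda_j(|\xi|) \leqslant -c_0$ for every $\xi\in Z_{\midd}(\varepsilon,N)$, and the claimed estimate on $\hat{U}(t,\xi)$ follows.

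The step I expect to be the most delicate is passing from the pointwise spectral bound to a uniform matrix-exponential bound $\|\mathrm{e}^{tM(\xi)}\|\lesssim \mathrm{e}^{-ct}$: at points where two eigenvalues coalesce, $M(\xi)$ may fail to be diagonalizable, and $\|\mathrm{e}^{tM(\xi)}\|$ then carries a polynomial factor in $t$ whose prefactor depends on the conditioning of the Jordan basis. This will be handled by choosing $c\in(0,c_0)$ so that $(1+t)^2\mathrm{e}^{-c_0 t}\lesssim \mathrm{e}^{-ct}$, and by invoking continuity of the resolvent $(\lambda I - M(\xi))^{-1}$ on a fixed contour encircling the spectrum to obtain uniform bounds over the compact zone $Z_{\midd}(\varepsilon,N)$.
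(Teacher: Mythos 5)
Your proposal is correct and follows essentially the same route as the paper, which itself only sketches this step as ``a contradiction argument to get an exponential stability'' and defers the details to Proposition 2.4 of \cite{Liu-Chen-2020}: one checks that the characteristic polynomial $\lambda^3+\lambda^2+(1+|\xi|^{2\sigma})\lambda+|\xi|^{2\sigma}$ admits no purely imaginary root for $|\xi|>0$ and then uses compactness of $Z_{\midd}(\varepsilon,N)$ to obtain a uniform spectral gap and hence the exponential bound. Your computation of the characteristic polynomial is right, and your treatment of possible non-diagonalizability via the resolvent on a fixed contour in $\{\mathrm{Re}\,\lambda\leqslant -c\}$ correctly closes the one genuinely delicate step.
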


\subsection{Regularity-loss-type estimates of solutions}
The pointwise estimates in the Fourier space is to depict the decay properties of solutions. In Section \ref{Rep.Sol}, the asymptotic behavior of eigenvalues and representation of solutions $\hat{U}(t,\xi)$ have been derived in Propositions \ref{Prop_Repre_Small}, \ref{Prop_Repre_Large}, \ref{Prop_Middle_Freq}. We may now summarize them to get the sharp pointwise estimates in the Fourier space.
\begin{prop}\label{Prop_Pointwise}
	The solution $\hat{U}=\hat{U}(t,\xi)$ to \eqref{Eq_First_Order_xi} fulfills the pointwise estimates for any $\xi\in\mb{R}^n$ and $t\geqslant 0$ as follows:
	\begin{align*}
	|\hat{U}(t,\xi)|\lesssim\exp\left(-c\frac{|\xi|^{2\sigma}}{(1+|\xi|^2)^{2\sigma}}t\right)|\hat{U}_0(\xi)|
	\end{align*}
	with a positive constant $c>0$.
\end{prop}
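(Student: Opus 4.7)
The plan is to glue the three zone-by-zone results of Propositions \ref{Prop_Repre_Small}, \ref{Prop_Repre_Large}, \ref{Prop_Middle_Freq} into the single pointwise estimate by means of the cut-off partition $\chi_{\intt}+\chi_{\midd}+\chi_{\extt}=1$. The key observation is that the weight $w(\xi):=|\xi|^{2\sigma}/(1+|\xi|^2)^{2\sigma}$ is tailored to the three regimes: $w(\xi)\sim|\xi|^{2\sigma}$ on $Z_{\intt}(\varepsilon)$, $w(\xi)\sim|\xi|^{-2\sigma}$ on $Z_{\extt}(N)$, and $w(\xi)$ is bounded above and below by positive constants on $Z_{\midd}(\varepsilon,N)$. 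Hence it suffices to prove exponential decay with rate proportional to $|\xi|^{2\sigma}$, $|\xi|^{-2\sigma}$ and a positive constant on the three respective zones, and then to replace every zone-specific rate by the common envelope $w(\xi)$.

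For the small-frequency regime, Proposition \ref{Prop_Repre_Small} gives $\hat{U}(t,\xi)=T_{\intt}\diag(\mathrm{e}^{\lambda_j(|\xi|)t})T_{\intt}^{-1}\hat{U}_0(\xi)$, so I would control $\|T_{\intt}\|$, $\|T_{\intt}^{-1}\|$ and $\max_j|\mathrm{e}^{\lambda_j(|\xi|)t}|$ separately. The matrix $N_1$ is a fixed invertible constant matrix; since $\|N_2(|\xi|)\|=\mathcal{O}(|\xi|^{\sigma})$ and $\|N_3(|\xi|)\|=\mathcal{O}(|\xi|^{2\sigma})$, shrinking $\varepsilon$ makes the Neumann series $(I_3+N_k)^{-1}=\sum_{j\geqslant 0}(-N_k)^j$ converge uniformly, yielding $\|T_{\intt}\|+\|T_{\intt}^{-1}\|\lesssim 1$ on $Z_{\intt}(\varepsilon)$. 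From the eigenvalue asymptotics $\operatorname{Re}\lambda_1=-|\xi|^{2\sigma}+\mathcal{O}(|\xi|^{3\sigma})$ and $\operatorname{Re}\lambda_{2,3}=-\tfrac{1}{2}+\mathcal{O}(|\xi|^{2\sigma})$, after possibly shrinking $\varepsilon$ one obtains $\max_j|\mathrm{e}^{\lambda_j(|\xi|)t}|\lesssim \mathrm{e}^{-c|\xi|^{2\sigma}t}$, which together with $w(\xi)\sim|\xi|^{2\sigma}$ on this zone gives the claim. The large-frequency regime is handled in the same way via Proposition \ref{Prop_Repre_Large}: each factor $I_3+N_k(|\xi|)$ in $T_{\extt}$ is a perturbation of the identity of size $\mathcal{O}(|\xi|^{-k\sigma})$, so for $N$ sufficiently large both $T_{\extt}$ and $T_{\extt}^{-1}$ are uniformly bounded on $Z_{\extt}(N)$, while the worst real part $\operatorname{Re}\lambda_{1,2}=-\tfrac{1}{2}|\xi|^{-2\sigma}(1+o(1))$ produces $\max_j|\mathrm{e}^{\lambda_j(|\xi|)t}|\lesssim \mathrm{e}^{-c|\xi|^{-2\sigma}t}$, matching $w(\xi)\sim|\xi|^{-2\sigma}$. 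On the middle zone, Proposition \ref{Prop_Middle_Freq} already delivers $|\hat{U}(t,\xi)|\lesssim \mathrm{e}^{-ct}|\hat{U}_0(\xi)|$, and since $w(\xi)$ is bounded above there, we may absorb $c$ into $c\,w(\xi)$ at the cost of a harmless constant.

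The main obstacle, beyond bookkeeping of constants, is to verify that $\varepsilon$ can be chosen small enough and $N$ large enough so that the three pieces are compatible: the Neumann series expansions defining $T_{\intt}^{-1}$ and $T_{\extt}^{-1}$ must converge uniformly on the respective zones, and the real parts of $\lambda_j(|\xi|)$ must retain a definite negative sign all the way up to the boundaries of the zones, so that the middle-zone estimate of Proposition \ref{Prop_Middle_Freq} overlaps with the two asymptotic regimes without leaving a gap. Once these compatibility conditions are checked once and for all, the gluing through $\chi_{\intt}+\chi_{\midd}+\chi_{\extt}=1$ and the monotonicity of $\mathrm{e}^{-\cdot\,t}$ reduce the proof to the elementary asymptotic comparisons between the zone-specific rates and $w(\xi)$ described above.
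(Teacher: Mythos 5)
Your proposal is correct and follows essentially the same route as the paper: the paper states Proposition \ref{Prop_Pointwise} precisely as a summary of Propositions \ref{Prop_Repre_Small}, \ref{Prop_Repre_Large} and \ref{Prop_Middle_Freq}, gluing the zone-wise decay rates $|\xi|^{2\sigma}$, $|\xi|^{-2\sigma}$ and a constant into the single envelope $|\xi|^{2\sigma}/(1+|\xi|^2)^{2\sigma}$ exactly as you describe. Your additional remarks on the uniform boundedness of $T_{\intt}^{\pm1}$, $T_{\extt}^{\pm1}$ and the sign of the real parts of the eigenvalues near the zone boundaries correctly fill in the details the paper leaves implicit.
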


Let us recall a useful lemma which was proved in \cite{Liu-Chen-2020}.
\begin{lemma}\label{Lemma.2} \cite[Lemma~3.2]{Liu-Chen-2020}
	Let us consider $f\in H^{s+\ell}(\mb{R}^n)$ with $s\geqslant0$ and $\ell\geqslant0$. Then, the following estimate holds:
	\begin{align*}
	\left\|\chi_{\extt}(D)|D|^{s}\ml{F}_{\xi\rightarrow x}^{-1}\left(\mathrm{e}^{-c|\xi|^{-\theta}t}\right)(t,x)f(x)\right\|_{L^2(\mb{R}^n)}\lesssim
	(1+t)^{-\frac{\ell}{\theta}}\|f\|_{H^{s+\ell}(\mb{R}^n)}
	\end{align*}
	where $\theta>0$ and $c>0$.
\end{lemma}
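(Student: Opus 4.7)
My plan is to reduce the stated inequality to a pointwise Fourier-multiplier bound, then extract the desired time decay by an elementary one-variable optimization. The key point is that in the exterior zone $|\xi|>N\gg 1$ the factor $e^{-c|\xi|^{-\theta}t}$ has \emph{no} frequency decay on its own; the decay in $t$ must be bought by paying with derivatives of $f$.

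First I would apply Plancherel's theorem to rewrite the squared left-hand side as
\begin{equation*}
\int_{|\xi|>N}|\xi|^{2s}\,e^{-2c|\xi|^{-\theta}t}\,|\chi_{\mathrm{ext}}(\xi)|^{2}|\hat f(\xi)|^{2}\,d\xi.
\end{equation*}
Then I would split off a factor $|\xi|^{2\ell}$ from the data and balance it against the exponential, writing
\begin{equation*}
|\xi|^{2s}e^{-2c|\xi|^{-\theta}t}
=\bigl(|\xi|^{-2\ell}e^{-2c|\xi|^{-\theta}t}\bigr)\cdot|\xi|^{2(s+\ell)}.
\end{equation*}
Since $|\xi|>N\geqslant 1$ on the support of $\chi_{\mathrm{ext}}$, we have $|\xi|^{2(s+\ell)}\lesssim(1+|\xi|^{2})^{s+\ell}$, so pulling out the supremum of the first factor gives
\begin{equation*}
\|\,\cdot\,\|_{L^{2}}^{2}\lesssim\Bigl(\sup_{|\xi|>N}|\xi|^{-2\ell}e^{-2c|\xi|^{-\theta}t}\Bigr)\,\|f\|_{H^{s+\ell}(\mathbb{R}^{n})}^{2}.
\end{equation*}

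Next I would bound the remaining supremum by the desired $(1+t)^{-2\ell/\theta}$. Setting $y=|\xi|^{-\theta}\in(0,N^{-\theta})$, the quantity becomes $h(y):=y^{2\ell/\theta}e^{-2cyt}$ on a bounded interval. Elementary calculus gives the interior critical point $y^{\ast}=\ell/(c\theta t)$ with
\begin{equation*}
h(y^{\ast})=\bigl(\tfrac{\ell}{c\theta t}\bigr)^{2\ell/\theta}e^{-2\ell/\theta}\lesssim t^{-2\ell/\theta}.
\end{equation*}
For $t$ large enough that $y^{\ast}<N^{-\theta}$, this is the global maximum on our range; for small $t$, $h$ is trivially bounded by $N^{-2\ell}$, which is a constant. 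Combining the two regimes yields $\sup_{|\xi|>N}|\xi|^{-2\ell}e^{-2c|\xi|^{-\theta}t}\lesssim(1+t)^{-2\ell/\theta}$, and taking square roots finishes the proof. The edge case $\ell=0$ is immediate since the exponential is bounded by $1$.

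The only place requiring any care is the one-variable optimization: one must check that the interior critical point $y^{\ast}$ lies inside the admissible range $(0,N^{-\theta})$ for all sufficiently large $t$, and that the small-$t$ regime is absorbed by the constant implicit in $(1+t)^{-2\ell/\theta}$. This is the main obstacle, but it is a routine calculus estimate; the rest is just Plancherel and a frequency-derivative trade-off.
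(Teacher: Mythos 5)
Your proof is correct, and it is the standard argument for this kind of regularity-loss estimate: Plancherel, the trade-off $|\xi|^{2s}\mathrm{e}^{-2c|\xi|^{-\theta}t}=\bigl(|\xi|^{-2\ell}\mathrm{e}^{-2c|\xi|^{-\theta}t}\bigr)|\xi|^{2(s+\ell)}$, and the elementary supremum bound $\sup_{|\xi|>N}|\xi|^{-2\ell}\mathrm{e}^{-2c|\xi|^{-\theta}t}\lesssim(1+t)^{-2\ell/\theta}$. The paper does not reprove this lemma but imports it from \cite[Lemma~3.2]{Liu-Chen-2020}, whose proof proceeds in essentially the same way, so no further comparison is needed.
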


By using Proposition \ref{Prop_Pointwise} and Lemma \ref{Lemma.2}, we may conclude the next energy estimates with initial data taken from Sobolev spaces with additional $L^m$ regularities for any $m\in[1,2]$.
\begin{theorem}\label{Thm_Energy_U}
	Let us assume $U_0\in (H^{s+\ell}(\mb{R}^n)\cap L^{m}(\mb{R}^n))^3$ with $s\geqslant0$, $\ell\geqslant0$ and $m\in[1,2]$. Then, the solution $U=U(t,x)$ to \eqref{Eq_First_Order_x} with $\sigma\geqslant1$ fulfills the estimates as follows:
	\begin{align*}
	\|U(t,\cdot)\|_{(\dot{H}^{s}(\mb{R}^n))^3}\lesssim
	(1+t)^{-\frac{n}{2\sigma}(\frac{1}{m}-\frac{1}{2})-\frac{s}{2\sigma}}\|U_0\|_{(L^m(\mb{R}^n))^3}+
	(1+t)^{-\frac{\ell}{2\sigma}}\|U_0\|_{(H^{s+\ell}(\mb{R}^n))^3}
	\end{align*}
	for any $t\geqslant0$.
\end{theorem}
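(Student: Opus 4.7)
The plan is to start from Plancherel's theorem, which reduces the $(\dot H^s(\mb{R}^n))^3$-norm of $U(t,\cdot)$ to the weighted $L^2$-norm $\||\xi|^s \hat U(t,\xi)\|_{(L^2)^3}$, to insert the pointwise bound of Proposition~\ref{Prop_Pointwise}, and to split the frequency integral by means of the cut-off functions $\chi_{\intt},\chi_{\midd},\chi_{\extt}$. On the three zones the multiplier $|\xi|^{2\sigma}/(1+|\xi|^2)^{2\sigma}$ in the exponent simplifies respectively to a quantity comparable to $|\xi|^{2\sigma}$ (since $(1+|\xi|^2)^{2\sigma}$ is then bounded above and below by positive constants), to a strictly positive constant, and to a quantity comparable to $|\xi|^{-2\sigma}$, yielding fractional-heat-like decay, exponential decay, and regularity-loss-type decay, respectively.

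For the small-frequency contribution I would combine H\"older's inequality with Hausdorff--Young. Setting $q=2m/(2-m)$ and $m'=m/(m-1)$, these two inequalities applied componentwise give
\[
\bigl\||\xi|^s\chi_{\intt}(\xi)\,\mathrm{e}^{-c|\xi|^{2\sigma}t}\,\hat U_0(\xi)\bigr\|_{(L^2)^3}\lesssim\bigl\||\xi|^s\chi_{\intt}(\xi)\,\mathrm{e}^{-c|\xi|^{2\sigma}t}\bigr\|_{L^q}\,\|U_0\|_{(L^m)^3},
\]
and the rescaling $\eta=t^{1/(2\sigma)}\xi$ in the remaining $L^q$-norm produces the factor $t^{-\frac{n}{2\sigma}(\frac{1}{m}-\frac{1}{2})-\frac{s}{2\sigma}}$ for $t\geqslant 1$; a trivial $L^\infty$-estimate on the multiplier takes care of $t\in[0,1]$ and replaces $t$ by $1+t$. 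The middle-frequency contribution is immediate from Plancherel and the uniform lower bound of the multiplier on $Z_{\midd}(\varepsilon,N)$ (equivalently, from Proposition~\ref{Prop_Middle_Freq}): it yields a factor $\mathrm{e}^{-ct}\|U_0\|_{(L^2)^3}$ which is dominated by either of the two terms in the target estimate. For the large-frequency contribution I would invoke Lemma~\ref{Lemma.2} with $\theta=2\sigma$ componentwise, directly producing $(1+t)^{-\ell/(2\sigma)}\|U_0\|_{(H^{s+\ell})^3}$.

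The main obstacle is not any single estimate but the conceptual necessity of splitting the bound into two independent pieces: the low-frequency block behaves like the fractional diffusion governed by $\partial_t+(-\Delta)^\sigma$ and therefore enjoys the full $L^m$--$L^2$ decay typical of $\sigma$-evolution models, whereas the high-frequency block carries only the weak exponent $|\xi|^{-2\sigma}$, which cannot pay for any positive power of $|\xi|$ unless one borrows additional Sobolev regularity; this is the origin of the regularity-loss term governed by $\ell$. A small technicality to watch is that the matrices $T_{\intt},T_{\extt}$ appearing in Propositions~\ref{Prop_Repre_Small}--\ref{Prop_Repre_Large} must be checked to be uniformly bounded together with their inverses on the respective zones, so that the componentwise pointwise bound of Proposition~\ref{Prop_Pointwise} indeed reduces the estimate to scalar multiplier computations.
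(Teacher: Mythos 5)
Your proposal is correct and follows essentially the same route as the paper: the pointwise bound of Proposition \ref{Prop_Pointwise}, a three-zone frequency splitting, H\"older plus Hausdorff--Young with the exponent $q=2m/(2-m)$ (the paper's $m_1$) and a scaling argument for small frequencies, Lemma \ref{Lemma.2} for large frequencies, and exponential decay in the middle zone. The only cosmetic difference is that the paper evaluates the small-frequency $L^q$-norm as an explicit radial integral rather than by rescaling, and handles $m=2$ as a separate limiting case via the $L^\infty$--$L^2$ H\"older estimate, which your choice of $q$ recovers automatically.
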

\begin{proof}
	Let us begin with the case when $m\in[1,2)$. From the pointwise estimates stated in Proposition \ref{Prop_Pointwise}, we deduce
	\begin{align*}
	\left\|\chi_{\intt}(D)|D|^{s}U(t,\cdot)\right\|_{(L^2(\mb{R}^n))^3}&\lesssim\left\|\chi_{\intt}(\xi)|\xi|^{s}\mathrm{e}^{-c|\xi|^{2\sigma}t}\hat{U}_0(\xi)\right\|_{(L^2(\mb{R}^n))^3}\\
	&\lesssim\left(\int_0^{\varepsilon}r^{sm_1+n-1}\mathrm{e}^{-cm_1r^{2\sigma}t}\mathrm{d}r\right)^{1/m_1}\|U_0\|_{(L^m(\mb{R}^n))^3}\\
	&\lesssim (1+t)^{-\frac{n}{2\sigma}(\frac{1}{m}-\frac{1}{2})-\frac{s}{2\sigma}}\|U_0\|_{(L^m(\mb{R}^n))^3},
	\end{align*}
	where $1/m_1=(2-m)/(2m)$ and we used H\"older's inequality as well as the Hausdorff-Young inequality. For the case of large frequencies, we apply Lemma \ref{Lemma.2} to get
	\begin{align*}
	\left\|\chi_{\extt}(D)|D|^{s}U(t,\cdot)\right\|_{(L^2(\mb{R}^n))^3}&\lesssim\left\|\chi_{\extt}(\xi)|\xi|^{s}\mathrm{e}^{-c|\xi|^{-2\sigma}t}\hat{U}_0(\xi)\right\|_{(L^2(\mb{R}^n))^3}\\
	&\lesssim(1+t)^{-\frac{\ell}{2\sigma}}\|U_0\|_{(H^{s+\ell}(\mb{R}^n))^3}.
	\end{align*}
	Finally, due to the exponential stability when $\xi\in Z_{\midd}(\varepsilon,N)$, we can complete the statement by summarizing the derived estimates in each zone.
	
	Let us consider the limit case when $m=2$. Thus, we may now apply the norm inequality $\|\cdot\|_{L^2(\mb{R}^n)}\lesssim\|\cdot\|_{L^{\infty}(\mb{R}^n)}\|\cdot\|_{L^2(\mb{R}^n)}$ for the estimates in each zone. By similar calculations to the case $m \in [1,2)$, we may complete the proof. 
\end{proof}

To investigate the global (in time) existence of solutions in the forthcoming part, the estimates for solutions and their derivatives play an important role. However, providing that we directly apply the estimates for $\hat{U}(t,\xi)$ to get the pointwise estimate for $\hat{u}(t,\xi)$, $|\xi|^{\sigma}\hat{u}(t,\xi)$ and $\hat{u}_t(t,\xi)$, then some loss of decay rates will appear. To overcome this difficulty, we may explicitly calculate the principal parts of representation of solutions from Propositions \ref{Prop_Repre_Small} and \ref{Prop_Repre_Large}. Combining
\begin{align*}
|\xi|^{\sigma}\hat{u}(t,\xi)=\frac{1}{2i}(1,-1,0)\, \hat{U}(t,\xi)\ \ \mbox{and}\ \ \hat{u}_t(t,\xi)=\frac{1}{2}(1,1,0)\, \hat{U}(t,\xi),
\end{align*}
we can derive the following sharper pointwise estimates.

\begin{theorem}\label{Thm_Est_Solution_Derivative}
	Let us assume $u_1\in H^{s+\ell}(\mb{R}^n)\cap L^m(\mb{R}^n)$ with $s\geqslant0$, $\ell\geqslant0$ and $m\in[1,2]$. Then, the solution $u=u(t,x)$ to \eqref{Linear_Generalized_Plate_Memory} fulfills the estimates as follows:
	\begin{align*}
	\|u(t,\cdot)\|_{L^2(\mb{R}^n)}&\lesssim (1+t)^{-\frac{n}{2\sigma}(\frac{1}{m}-\frac{1}{2})} \|u_1\|_{ L^m(\mb{R}^n)}+(1+t)^{-\frac{1}{2}-\frac{\ell}{2\sigma}}\|u_1\|_{H^{\ell}(\mb{R}^n)},\\
	\|\,|D|^{\sigma}u(t,\cdot)\|_{\dot{H}^{s}(\mb{R}^n)}&\lesssim (1+t)^{-\frac{n}{2\sigma}(\frac{1}{m}-\frac{1}{2})-\frac{1}{2}-\frac{s}{2\sigma}}\|u_1\|_{L^m(\mb{R}^n)}+(1+t)^{-\frac{\ell}{2\sigma}}\|u_1\|_{H^{s+\ell}(\mb{R}^n)},\\
	\|u_t(t,\cdot)\|_{\dot{H}^{s}(\mb{R}^n)}&\lesssim (1+t)^{-\frac{n}{2\sigma}(\frac{1}{m}-\frac{1}{2})-1-\frac{s}{2\sigma}}\|u_1\|_{L^m(\mb{R}^n)}+(1+t)^{-\frac{\ell}{2\sigma}}\|u_1\|_{H^{s+\ell}(\mb{R}^n)},
	\end{align*}
	for any $t\geqslant0$. 
\end{theorem}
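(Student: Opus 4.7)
The plan is to sharpen the componentwise pointwise estimates underlying Proposition \ref{Prop_Pointwise} by exploiting the explicit diagonalizations of Propositions \ref{Prop_Repre_Small} and \ref{Prop_Repre_Large}, together with the two scalar identities
$$|\xi|^\sigma\hat u(t,\xi)=\tfrac{1}{2i}(1,-1,0)\hat U(t,\xi),\qquad \hat u_t(t,\xi)=\tfrac{1}{2}(1,1,0)\hat U(t,\xi),$$
and then feeding the resulting frequency-by-frequency bounds into the same $L^2$ integration scheme as in the proof of Theorem \ref{Thm_Energy_U}.

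In $Z_{\intt}(\varepsilon)$ I would first compute the two constant row vectors $(1,-1,0)N_1=(-2,0,0)$ and $(1,1,0)N_1=(0,-1+i\sqrt{3},-1-i\sqrt{3})$. Since $\mathrm{Re}\,\lambda_{2,3}=-\tfrac12+O(|\xi|^{2\sigma})$, the modes $e^{\lambda_2 t},e^{\lambda_3 t}$ contribute only exponentially decaying remainders, so the polynomially decaying mode $e^{\lambda_1 t}\sim e^{-c|\xi|^{2\sigma}t}$ is the only relevant one. Next, solving $N_1\alpha=(\hat u_1,\hat u_1,0)^{\mathrm T}$ gives $\alpha_1=0$ at leading order, so the first component of $T_{\intt}^{-1}\hat U_0$ is $O(|\xi|^\sigma)\hat u_1$ after the corrections from $N_2,N_3$ are reinserted. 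Combining these two independent cancellations with the structure of the rows $(1,\pm 1,0)T_{\intt}$, I expect to obtain, for $\xi\in Z_{\intt}(\varepsilon)$,
$$|\hat u|\lesssim e^{-c|\xi|^{2\sigma}t}|\hat u_1|,\quad |\xi|^\sigma|\hat u|\lesssim |\xi|^\sigma e^{-c|\xi|^{2\sigma}t}|\hat u_1|,\quad |\hat u_t|\lesssim |\xi|^{2\sigma}e^{-c|\xi|^{2\sigma}t}|\hat u_1|,$$
modulo pointwise $e^{-ct}|\hat u_1|$ remainders.

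In $Z_{\extt}(N)$, expanding $T_{\extt}=I_3+N_4+N_5+N_6+\cdots$ and using $\lambda_{1,2}=\pm i|\xi|^\sigma-\tfrac12|\xi|^{-2\sigma}+\cdots$ produces $(1,-1,0)\hat U\approx (e^{\lambda_1 t}-e^{\lambda_2 t})\hat u_1\sim 2i\sin(|\xi|^\sigma t)e^{-c|\xi|^{-2\sigma}t}\hat u_1$, while $(1,1,0)\hat U\approx (e^{\lambda_1 t}+e^{\lambda_2 t})\hat u_1$ stays bounded in $|\xi|$. Dividing by $2i|\xi|^\sigma$ in the first case supplies an extra $|\xi|^{-\sigma}$, so that for $|\xi|>N$,
$$|\hat u|\lesssim |\xi|^{-\sigma}e^{-c|\xi|^{-2\sigma}t}|\hat u_1|,\qquad |\xi|^\sigma|\hat u|+|\hat u_t|\lesssim e^{-c|\xi|^{-2\sigma}t}|\hat u_1|,$$
again up to exponentially decaying remainders coming from $\lambda_3\sim -1$. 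Middle frequencies are absorbed by Proposition \ref{Prop_Middle_Freq}.

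To finish, I would integrate in $\xi$ as in the proof of Theorem \ref{Thm_Energy_U}. On $Z_{\intt}(\varepsilon)$, H\"older's inequality with exponent $m_1=2m/(2-m)$ and Hausdorff-Young produce the factors $(1+t)^{-\frac{n}{2\sigma}(\frac1m-\frac12)-\frac{k}{2}}$ for $k\in\{0,1,2\}$ from the weighted Gaussian integrals $\int_0^\varepsilon r^{n-1+k\sigma m_1}e^{-cm_1 r^{2\sigma}t}\,dr$; on $Z_{\extt}(N)$ I would combine Lemma \ref{Lemma.2} with the elementary bound
$$\sup_{|\xi|>N}|\xi|^{-\sigma}\langle\xi\rangle^{-\ell}e^{-c|\xi|^{-2\sigma}t}\lesssim (1+t)^{-\frac12-\frac{\ell}{2\sigma}},$$
attained at $|\xi|\sim t^{1/(2\sigma)}$, to extract the additional $(1+t)^{-1/2}$ gain in the high-frequency estimate for $u$. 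The main obstacle is the low-frequency algebra: the $(1+t)^{-1}$ gain for $\hat u_t$ rests on \emph{two} simultaneous $|\xi|^\sigma$ cancellations (namely $(1,1,0)T_{\intt}e_1=O(|\xi|^\sigma)$ and $\alpha_1=O(|\xi|^\sigma)\hat u_1$), and checking that the higher-order corrections from $N_2,N_3$ preserve rather than further cancel the resulting $|\xi|^{2\sigma}$ weight is the delicate part of the argument.
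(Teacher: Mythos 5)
Your proposal follows essentially the same route as the paper's proof: you compute the rows $(1,\pm1,0)T_{\intt}$ and $(1,\pm1,0)T_{\extt}$ and the vector $T^{-1}\hat{U}_0$ explicitly from Propositions \ref{Prop_Repre_Small} and \ref{Prop_Repre_Large} to obtain the sharpened zone-wise pointwise bounds (the $|\xi|^{2\sigma}$ weight for $\hat{u}_t$ and the $|\xi|^{-\sigma}$ gain for $\hat{u}$ at high frequencies), and then integrate exactly as in Theorem \ref{Thm_Energy_U} and Lemma \ref{Lemma.2}. The cancellations you isolate, namely $(1,-1,0)N_1=(-2,0,0)$ and $\alpha_1=\mathcal{O}(|\xi|^{\sigma})\hat{u}_1$, are precisely those the paper uses implicitly, and your explicit bookkeeping of the harmless $\mathrm{e}^{-ct}|\hat{u}_1|$ remainders from the $\lambda_2,\lambda_3$ modes is, if anything, slightly more careful than the paper's write-up.
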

\begin{proof}
	Let us consider the solutions localized to small frequency zone firstly. According to Proposition \ref{Prop_Repre_Small}, we may compute explicit formula of coefficient matrix $T_{\intt}$ and its inverse. By taking into consideration $\xi\in Z_{\intt}(\varepsilon)$, it yields
	\begin{align*}
	\chi_{\intt}(\xi)|\xi|^{\sigma}|\hat{u}(t,\xi)|&\lesssim\chi_{\intt}(\xi)\left(\mathrm{e}^{\lambda_1(|\xi|)t}+\tfrac{\sqrt{3}i}{6}\left|\mathrm{e}^{\lambda_2(|\xi|)t}-\mathrm{e}^{\lambda_3(|\xi|)t}\right|+\tfrac{1}{2}\left|\mathrm{e}^{\lambda_2(|\xi|)t}+\mathrm{e}^{\lambda_3(|\xi|)t}\right|\right)|\xi|^{\sigma}|\hat{u}_1(\xi)|,\\
	\chi_{\intt}(\xi)|\hat{u}_t(t,\xi)|&\lesssim\chi_{\intt}(\xi)\left(\mathrm{e}^{\lambda_1(|\xi|)t}+\tfrac{\sqrt{3}i}{9}\left|\mathrm{e}^{\lambda_2(|\xi|)t}-\mathrm{e}^{\lambda_3(|\xi|)t}\right|+\tfrac{1}{3}\left|\mathrm{e}^{\lambda_2(|\xi|)t}+\mathrm{e}^{\lambda_3(|\xi|)t}\right|\right)|\xi|^{2\sigma}|\hat{u}_1(\xi)|.
	\end{align*}
	It is clear from Euler's formula that
	\begin{align*}
	\chi_{\intt}(\xi)\left|\mathrm{e}^{\lambda_2(|\xi|)t}-\mathrm{e}^{\lambda_3(|\xi|)t}\right|&\lesssim\chi_{\intt}(\xi)\, \mathrm{e}^{-\frac{t}{2}+|\xi|^{\sigma}t}\left|i\sin\left(\left(\tfrac{\sqrt{3}}{2}+\tfrac{\sqrt{3}}{6}|\xi|^{2\sigma}\right)t\right)\right|,\\
	\chi_{\intt}(\xi)\left|\mathrm{e}^{\lambda_2(|\xi|)t}+\mathrm{e}^{\lambda_3(|\xi|)t}\right|&\lesssim\chi_{\intt}(\xi)\, \mathrm{e}^{-\frac{t}{2}+|\xi|^{\sigma}t}\left|\cos\left(\left(\tfrac{\sqrt{3}}{2}+\tfrac{\sqrt{3}}{6}|\xi|^{2\sigma}\right)t\right)\right|.
	\end{align*}
	In other words, by using bounded estimates for the sine and cosine functions, one may have the following estimates for any $s\geqslant0$:
	\begin{align*}
	\chi_{\intt}(\xi)|\hat{u}(t,\xi)|&\lesssim \chi_{\intt}(\xi)\,\mathrm{e}^{-c_1|\xi|^{2\sigma}t}|\hat{u}_1(\xi)|,\\
	\chi_{\intt}(\xi)|\xi|^{s+\sigma}|\hat{u}(t,\xi)|&\lesssim \chi_{\intt}(\xi)\,|\xi|^{s+\sigma}\mathrm{e}^{-c_1|\xi|^{2\sigma}t}|\hat{u}_1(\xi)|,\\
	\chi_{\intt}(\xi)|\xi|^{s}|\hat{u}_t(t,\xi)|&\lesssim \chi_{\intt}(\xi)\,|\xi|^{s+2\sigma}\mathrm{e}^{-c_1|\xi|^{2\sigma}t}|\hat{u}_1(\xi)|,
	\end{align*}
	where $c_1$ is a suitable positive constant. By following the same approach as in the proof of Theorem \ref{Thm_Energy_U} for small frequencies, we can immediately estimate the solutions and their derivatives for initial data taken from $L^m$ spaces with $m\in[1,2]$.
	
	Next, we consider the solutions localized to large frequency zone. By the similar way to small frequency zone, from Proposition \ref{Prop_Repre_Large}, we may carry out some calculations to get
	\begin{align*}
	\chi_{\extt}(\xi)|\xi|^{\sigma}|\hat{u}(t,\xi)|&\lesssim \chi_{\extt}(\xi)\left|i\left(\mathrm{e}^{\lambda_1(|\xi|)t}-\mathrm{e}^{\lambda_2(|\xi|)t}\right)\right||\hat{u}_1(\xi)|+\chi_{\extt}(\xi)|\xi|^{-3\sigma}\mathrm{e}^{\lambda_3(|\xi|)t}|\hat{u}_1(\xi)|,\\
	\chi_{\extt}(\xi)|\hat{u}_t(t,\xi)|&\lesssim \chi_{\extt}(\xi)\left|\mathrm{e}^{\lambda_1(|\xi|)t}+\mathrm{e}^{\lambda_2(|\xi|)t}\right||\hat{u}_1(\xi)|+\chi_{\extt}(\xi)|\xi|^{-4\sigma}\mathrm{e}^{\lambda_3(|\xi|)t}|\hat{u}_1(\xi)|.
	\end{align*}
	Moreover, we know
	\begin{align*}
	\chi_{\extt}(\xi)\left|\mathrm{e}^{\lambda_1(|\xi|)t}-\mathrm{e}^{\lambda_2(|\xi|)t}\right|&\lesssim\chi_{\extt}(\xi)\, \mathrm{e}^{-\frac{1}{2}|\xi|^{-2\sigma}t}\left|\sin\left(\left(|\xi|^{\sigma}+\tfrac{1}{2}|\xi|^{-\sigma}\right)t\right)\right|,\\
	\chi_{\extt}(\xi)\left|\mathrm{e}^{\lambda_1(|\xi|)t}+\mathrm{e}^{\lambda_2(|\xi|)t}\right|&\lesssim\chi_{\extt}(\xi)\, \mathrm{e}^{-\frac{1}{2}|\xi|^{-2\sigma}t}\left|\cos\left(\left(|\xi|^{\sigma}+\tfrac{1}{2}|\xi|^{-\sigma}\right)t\right)\right|,
	\end{align*}
	and
	\begin{align*}
	\chi_{\extt}(\xi)|\xi|^{-j\sigma}\mathrm{e}^{\lambda_3(|\xi|)t}\lesssim\chi_{\extt}(\xi)\mathrm{e}^{-\tilde{c}t},
	\end{align*}
	with $j=3,4,$ and a positive constant $\tilde{c}$, which lead to the next estimates for any $s\geqslant0$:
	\begin{align*}
	\chi_{\extt}(\xi)|\hat{u}(t,\xi)|&\lesssim \chi_{\extt}(\xi)\,|\xi|^{-\sigma}\mathrm{e}^{-c_2|\xi|^{-2\sigma}t}|\hat{u}_1(\xi)|,\\
	\chi_{\extt}(\xi)|\xi|^{s+\sigma}|\hat{u}(t,\xi)|&\lesssim \chi_{\extt}(\xi)\,|\xi|^{s}\mathrm{e}^{-c_2|\xi|^{-2\sigma}t}|\hat{u}_1(\xi)|,\\
	\chi_{\extt}(\xi)|\xi|^{s}|\hat{u}_t(t,\xi)|&\lesssim \chi_{\extt}(\xi)\,|\xi|^{s}\mathrm{e}^{-c_2|\xi|^{-2\sigma}t}|\hat{u}_1(\xi)|,
	\end{align*}
	where $c_2$ is a suitable positive constant. By repeating the same way as in the proof of Theorem \ref{Thm_Energy_U} for large frequencies, the solutions and their derivatives can be estimated.
	
	Finally, due to the exponential stability for middle frequencies such that
	\begin{align*}
	\chi_{\midd}(\xi)|\xi|^s\left(|\hat{u}(t,\xi)|+|\xi|^{\sigma}|\hat{u}(t,\xi)|+|\hat{u}_t(t,\xi)|\right)\lesssim \chi_{\midd}(\xi) \mathrm{e}^{-ct}|\hat{u}_1(\xi)|
	\end{align*}
	with a positive constant $c>0$, we may have
	\begin{align*}
	\|\chi_{\midd}(D)u(t,\cdot)\|_{L^2(\mb{R}^n)}+\|\chi_{\midd}(D)|D|^{\sigma}u(t,\cdot)\|_{\dot{H}^s(\mb{R}^n)}+\|\chi_{\midd}(D)u_t(t,\cdot)\|_{\dot{H}^s(\mb{R}^n)}\lesssim \mathrm{e}^{-ct}\|u_1\|_{L^2(\mb{R}^n)}.
	\end{align*}
	Summarizing the derived estimates completes the proof.
\end{proof}

Besides the achieved estimates from Theorem \ref{Thm_Est_Solution_Derivative}, we may follow the proof of previous theorems to arrive at the following further estimates.

\begin{coro}\label{Cor_Est_Solution_Derivative}
	Let us assume $u_1\in H^{s+\ell}(\mb{R}^n)\cap L^m(\mb{R}^n)$ with $s\geqslant0$, $\ell\geqslant0$ and $m\in[1,2]$. Then, the solution $u=u(t,x)$ to \eqref{Linear_Generalized_Plate_Memory} fulfills the estimates as follows:
	\begin{align*}
	\|u(t,\cdot)\|_{\dot{H}^{s}(\mb{R}^n)}&\lesssim 
	\begin{cases}
	(1+t)^{-\frac{n}{2\sigma}(\frac{1}{m}-\frac{1}{2})-\frac{s}{2\sigma}} \|u_1\|_{ L^m(\mb{R}^n)}+(1+t)^{-\frac{1}{2}-\frac{\ell}{2\sigma}+\frac{s}{2\sigma}}\|u_1\|_{H^{\ell}(\mb{R}^n)}, \\
	(1+t)^{-\frac{n}{2\sigma}(\frac{1}{m}-\frac{1}{2})-\frac{s}{2\sigma}} \|u_1\|_{ L^m(\mb{R}^n)}+(1+t)^{-\frac{1}{2}-\frac{\ell}{2\sigma}}\|u_1\|_{H^{s+\ell}(\mb{R}^n)},
	\end{cases}
	\end{align*}
	for any $t\geqslant0$. 
\end{coro}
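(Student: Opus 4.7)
My plan is to adapt the proof of Theorem \ref{Thm_Est_Solution_Derivative} rather than redo it from scratch, since the only quantity changing is the derivative that gets peeled off in the large-frequency zone. In the small-frequency zone, the pointwise bound $\chi_{\intt}(\xi)|\hat{u}(t,\xi)|\lesssim \chi_{\intt}(\xi)\,\mathrm{e}^{-c_1|\xi|^{2\sigma}t}|\hat{u}_1(\xi)|$ already established in that proof, combined with $|\xi|^s$ and the same H\"older plus Hausdorff-Young argument, yields the first summand $(1+t)^{-\frac{n}{2\sigma}(\frac{1}{m}-\frac{1}{2})-\frac{s}{2\sigma}}\|u_1\|_{L^m(\mb{R}^n)}$ appearing in both cases. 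In the middle zone, Proposition \ref{Prop_Middle_Freq} gives exponential decay, which is absorbed into either $\|u_1\|_{H^{\ell}(\mb{R}^n)}$ or $\|u_1\|_{H^{s+\ell}(\mb{R}^n)}$.

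The real work is in the large-frequency zone, where the proof of Theorem \ref{Thm_Est_Solution_Derivative} has already produced the sharper pointwise bound
\begin{align*}
\chi_{\extt}(\xi)|\hat{u}(t,\xi)|\lesssim \chi_{\extt}(\xi)\,|\xi|^{-\sigma}\mathrm{e}^{-c_2|\xi|^{-2\sigma}t}|\hat{u}_1(\xi)|,
\end{align*}
so that the symbol to control is $\chi_{\extt}(\xi)|\xi|^{s-\sigma}\mathrm{e}^{-c_2|\xi|^{-2\sigma}t}$. The key elementary ingredient I plan to use is the standard calculus estimate
\begin{align*}
\sup_{r\geqslant N} r^{-\alpha}\mathrm{e}^{-cr^{-2\sigma}t}\lesssim (1+t)^{-\frac{\alpha}{2\sigma}},\qquad \alpha\geqslant 0,
\end{align*}
which is proven by the substitution $\beta=r^{-2\sigma}$ and reduces everything to $\sup_{\beta>0}\beta^{\alpha/(2\sigma)}\mathrm{e}^{-c\beta t}\lesssim t^{-\alpha/(2\sigma)}$. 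The extra $|\xi|^{-\sigma}$ compared to the bound used in Theorem \ref{Thm_Est_Solution_Derivative} for $|D|^{\sigma}u$ is exactly what produces the additional $(1+t)^{-1/2}$ factor appearing in both cases of the corollary.

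To reach the first inequality of the corollary, I will split
\begin{align*}
\chi_{\extt}(\xi)|\xi|^{s-\sigma}\mathrm{e}^{-c_2|\xi|^{-2\sigma}t}=\left(\chi_{\extt}(\xi)|\xi|^{s-\sigma-\ell}\mathrm{e}^{-c_2|\xi|^{-2\sigma}t}\right)\cdot |\xi|^{\ell},
\end{align*}
apply the sup estimate above with $\alpha=\ell+\sigma-s$ to obtain the factor $(1+t)^{-\frac{1}{2}-\frac{\ell}{2\sigma}+\frac{s}{2\sigma}}$, and then take $L^2$ norms; the remaining $|\xi|^{\ell}|\hat{u}_1(\xi)|$ contributes $\|u_1\|_{\dot{H}^{\ell}(\mb{R}^n)}\lesssim\|u_1\|_{H^{\ell}(\mb{R}^n)}$. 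To reach the second inequality, I will split instead
\begin{align*}
\chi_{\extt}(\xi)|\xi|^{s-\sigma}\mathrm{e}^{-c_2|\xi|^{-2\sigma}t}=\left(\chi_{\extt}(\xi)|\xi|^{-\sigma}\mathrm{e}^{-\e|\xi|^{-2\sigma}t}\right)\cdot \left(\chi_{\extt}(\xi)|\xi|^{s}\mathrm{e}^{-(c_2-\e)|\xi|^{-2\sigma}t}\right)
\end{align*}
for a small $\e>0$; the first factor is bounded by $(1+t)^{-1/2}$ using the same sup estimate with $\alpha=\sigma$, and the second factor is fed directly into Lemma \ref{Lemma.2} with $\theta=2\sigma$ to contribute $(1+t)^{-\frac{\ell}{2\sigma}}\|u_1\|_{H^{s+\ell}(\mb{R}^n)}$.

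The minor obstacle is essentially bookkeeping: one has to check that the first decomposition is permissible, i.e.\ that the interpretation of $|\xi|^{s-\sigma-\ell}$ as providing decay is legitimate regardless of the sign of $s-\sigma-\ell$ on $Z_{\extt}(N)$, which follows because the sup estimate handles any $\alpha\geqslant 0$ and the case $\alpha<0$ is trivially bounded by using $|\xi|^{-\alpha}\lesssim 1$ on $Z_{\extt}$ and discarding the exponential. Combining the small, middle and large frequency bounds then produces exactly the two estimates listed in Corollary \ref{Cor_Est_Solution_Derivative}.
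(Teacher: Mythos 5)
Your proof is essentially the one the paper intends: the paper gives no argument for this corollary beyond ``follow the proof of previous theorems,'' and your route --- reuse the small/middle-frequency bounds from Theorem \ref{Thm_Est_Solution_Derivative}, then exploit the extra factor $|\xi|^{-\sigma}$ in the large-frequency pointwise bound for $\hat{u}$, distributing it either onto a sup-norm estimate of the full symbol (first line) or splitting the exponential so that Lemma \ref{Lemma.2} still applies (second line) --- is exactly the natural completion of that instruction. Both splittings are correct, the elementary sup estimate is correct, and the bookkeeping $\| \chi_{\extt}(\xi)|\xi|^{\ell}\hat{u}_1\|_{L^2(\mb{R}^n)}\lesssim\|u_1\|_{H^{\ell}(\mb{R}^n)}$ is fine.

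One point in your last paragraph is wrong as written. If $\alpha=\ell+\sigma-s<0$, then $|\xi|^{-\alpha}$ is a \emph{positive} power of $|\xi|$ and is unbounded on $Z_{\extt}(N)$, so the claim ``$|\xi|^{-\alpha}\lesssim 1$ on $Z_{\extt}$'' is backwards and does not rescue that case; indeed, for $s>\sigma+\ell$ the first bound of the corollary is a growing function of $t$ and cannot be extracted from the pointwise estimate $\chi_{\extt}(\xi)|\xi|^{s}|\hat{u}(t,\xi)|\lesssim\chi_{\extt}(\xi)|\xi|^{s-\sigma}\mathrm{e}^{-c_2|\xi|^{-2\sigma}t}|\hat{u}_1(\xi)|$ at all (take $\hat{u}_1$ concentrated near $|\xi|^{2\sigma}\approx t^2$). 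The honest fix is to note that the first alternative is meaningful, and your proof valid, precisely under the implicit restriction $s\leqslant\sigma+\ell$, which holds in every application the paper makes of this corollary (there $s<\sigma$ and $\ell\geqslant 0$). With that caveat recorded, the argument is complete.
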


\begin{remark}
	Comparing with the previous study on the Cauchy problem for plate equation with an exponential decay memory term, i.e. Theorem 2.7 in \cite{Liu-Ueda-2020}, our estimates stated in Theorem \ref{Thm_Est_Solution_Derivative} with $\sigma=2$ for the energies are sharper because we derived the estimates for $|\xi|^{\sigma}\hat{u}(t,\xi)$ and $\hat{u}_t(t,\xi)$ independently in the Fourier space. Furthermore, we also derive estimates for the solution itself in Theorem \ref{Thm_Est_Solution_Derivative}, which did not be shown in \cite{Liu-Ueda-2020}.
\end{remark}
\begin{remark}
	According to Theorems \ref{Thm_Energy_U} and \ref{Thm_Est_Solution_Derivative}, we may observe the decay properties of regularity-loss if $\ell>0$, namely, it requires $s+\ell$ regularity for initial data to estimate the energies in the $\dot{H}^s$ norm. In the special case when $\ell=0$, the effect of regularity-loss will disappear, nevertheless, the decay properties also will lost simultaneously.
\end{remark}
\begin{remark}\label{Compare-Duong-Reissig}
	Let us now compare the energy estimates with initial data taken from $L^1\cap L^2$ space between Theorem \ref{Thm_Est_Solution_Derivative} and those from the previous study on the Cauchy problem for $\sigma$-evolution models with friction, namely,
	\begin{align}\label{Sigma-Evo_Friction}
	\begin{cases}
	u_{tt}+(-\Delta)^{\sigma}u+u_t=0,&x\in\mb{R}^n,\ t>0,\\
	u(0,x)=0,\ u_t(0,x)=u_1(x),&x\in\mb{R}^n.
	\end{cases}
	\end{align}
	The authors in \cite{Duong-Reissig-2017} proved the estimates for solutions to \eqref{Sigma-Evo_Friction} such that 
	\begin{align*}
	\|u(t,\cdot)\|_{L^2(\mb{R}^n)}&\lesssim (1+t)^{-\frac{n}{4\sigma}}\|u_1\|_{ L^1(\mb{R}^n)}+\mathrm{e}^{-ct}\|u_1\|_{H^{-\sigma}(\mb{R}^n)},\\
	\|\,|D|^{\sigma}u(t,\cdot)\|_{L^2(\mb{R}^n)}&\lesssim (1+t)^{-\frac{n}{4\sigma}-\frac{1}{2}}\|u_1\|_{L^1(\mb{R}^n)}+\mathrm{e}^{-ct}\|u_1\|_{L^2(\mb{R}^n)},\\
	\|u_t(t,\cdot)\|_{L^2(\mb{R}^n)}&\lesssim (1+t)^{-\frac{n}{4\sigma}-1}\|u_1\|_{ L^1(\mb{R}^n)}+\mathrm{e}^{-ct}\|u_1\|_{L^2(\mb{R}^n)},
	\end{align*}
	with a positive constant $c$, which are stronger than ours in Theorem \ref{Thm_Est_Solution_Derivative}, in particular, the time-dependent coefficient of the $H^{-\sigma}$ or $L^2$ norms of $u_1$. Nonetheless, the estimates for solutions localized to small frequency zone are coincide with those stated in Theorem \ref{Thm_Est_Solution_Derivative}, which are represented by the time-dependent coefficient of the $L^1$ norm of $u_1$. As a consequence, we observe the damping effect generated by $u-g\ast u$ containing the exponential decay memory term  is weaker than that generated by the frictional damping term $u_t$ due to the regularity-loss effect.
\end{remark}

\subsection{Generalized diffusion phenomena}
It is well-known that the diffusion phenomenon allows us to bridge decay behaviors of solutions between the dissipative evolution equation and its corresponding parabolic-like evolution equations. For example, the classical diffusion phenomenon bridges decay behaviors between damped wave equation and heat equation. Let us turn to the general evolution equations. Our aim is to investigate general diffusion phenomena, which gives approximate equations to describe the general evolution equations.

Let us begin with introducing some notations to state generalized diffusion phenomena, particularly, the approximate equations and their solutions. 

For one thing, we denote $|\xi|$-dependent functions
\begin{align*}
\lambda_j^0(|\xi|):=\begin{cases}
-|\xi|^{2\sigma}&\text{if}\ \ j=1,\\
-\left(\tfrac{1}{2}+i\tfrac{\sqrt{3}}{2}\right)+\left(\tfrac{1}{2}-i\tfrac{\sqrt{3}}{6}\right)|\xi|^{2\sigma}&\text{if}\ \  j=2,\\
-\left(\tfrac{1}{2}-i\tfrac{\sqrt{3}}{2}\right)+\left(\tfrac{1}{2}+i\tfrac{\sqrt{3}}{6}\right)|\xi|^{2\sigma}&\text{if}\ \ j=3,
\end{cases}
\end{align*}
and
\begin{align*}
\lambda_j^{\infty}(|\xi|):=\begin{cases}
i|\xi|^{\sigma}+\tfrac{i}{2}|\xi|^{-\sigma}-\tfrac{1}{2}|\xi|^{-2\sigma}&\text{if}\ \ j=1,\\
-i|\xi|^{\sigma}-\tfrac{i}{2}|\xi|^{-\sigma}-\tfrac{1}{2}|\xi|^{-2\sigma}&\text{if}\ \ j=2,\\
-1+|\xi|^{-2\sigma}&\text{if}\ \ j=3.
\end{cases}
\end{align*}
Obviously, $\lambda_j^0(|\xi|)$ and $\lambda_j^{\infty}(|\xi|)$ with $j=1,2,3$ are the principal parts of the corresponding eigenvalues $\lambda_j(|\xi|)$ for small frequencies and for large frequencies, respectively. That is to say for $j=1,2,3$
\begin{align*}
\lambda_j(|\xi|)-\lambda_j^0(|\xi|)&=\ml{O}(|\xi|^{3\sigma})\ \ \ \, \mbox{as}\ \ |\xi|\to 0,\\
\lambda_j(|\xi|)-\lambda_j^{\infty}(|\xi|)&=\ml{O}(|\xi|^{-3\sigma})\ \ \mbox{as} \ \ |\xi|\to\infty.
\end{align*}
These remaining parts contribute to the additional decay rate or the improvement for regularity on the estimates.

For another, we introduce some reference equations motivated by the eigenvalues $\lambda_j^0(|\xi|)$ and $\lambda_j^{\infty}(|\xi|)$ for $j=1,2,3$. The reference equations with respect to $U^0=U^0(t,x)$ can be written by
\begin{align}\label{Ref_Eq_Small}
\begin{cases}
U^{0}_t+\Lambda_0U^{0}+\Lambda_1(-\Delta)^{\sigma}U^{0}=0,&x\in\mb{R}^n,\ t>0,\\
U^0(0,x):=\ml{F}^{-1}\left((I_{3}+N_2(|\xi|))^{-1}\right)(x) N_1^{-1}U_0(x)&x\in\mb{R}^n,
\end{cases}
\end{align}
where the diagonal coefficient matrices are defined by
\begin{align*}
\Lambda_0:=\diag\left(0,\tfrac{1}{2}+i\tfrac{\sqrt{3}}{2},\tfrac{1}{2}-i\tfrac{\sqrt{3}}{2}\right),\quad \Lambda_1:=\diag\left(1,-\tfrac{1}{2}+i\tfrac{\sqrt{3}}{6},-\tfrac{1}{2}-i\tfrac{\sqrt{3}}{6}\right),
\end{align*}
and the matrices $N_1$, $N_2(|\xi|)$ were defined in the statement of Proposition \ref{Prop_Repre_Small}. So, the solution to \eqref{Ref_Eq_Small} in the Fourier space can be uniquely represented by $\lambda_j^0(|\xi|)$ with $N_1$ and $N_2(|\xi|)$.\\ The reference equations with respect to $U^{\infty}=U^{\infty}(t,x)$  can be written by
\begin{align}\label{Ref_Eq_Large}
\begin{cases}
U^{\infty}_t+\Lambda_2(-\Delta)^{\sigma/2}U^{\infty}+\Lambda_3U^{\infty}+\Lambda_4(-\Delta)^{-\sigma/2}U^{\infty}+\Lambda_5(-\Delta)^{-\sigma}U^{\infty}=0,&x\in\mb{R}^n, \ t>0,\\
U^{\infty}(0,x):=\ml{F}^{-1}\left((I_{3}+N_5(|\xi|))^{-1}(I_{3}+N_4(|\xi|))^{-1}\right)(x) U_0(x),&x\in\mb{R}^n,
\end{cases}
\end{align}
where the diagonal coefficient matrices are defined by
\begin{align*}
\Lambda_2:=\diag(-i,i,0),\quad\Lambda_3:=\diag(0,0,1),\quad\Lambda_4:=\diag\left(-\tfrac{i}{2},\tfrac{i}{2},0\right),\quad\Lambda_5:=\diag\left(\tfrac{1}{2},\tfrac{1}{2},-1\right),
\end{align*}
and the matrices $N_4(|\xi|)$, $N_5(|\xi|)$ were defined in the statement of  Proposition \ref{Prop_Repre_Large}. In other words, the solution to \eqref{Ref_Eq_Large} in the Fourier space is uniquely shown by $\lambda_j^{\infty}(|\xi|)$ with $N_4(|\xi|)$ and $N_5(|\xi|)$.

Before showing our main result, let us denote
\begin{align*}
S_0(t,x)&:=\chi_{\intt}(D)N_1\ml{F}^{-1}\left(I_3+N_2(|\xi|)\right)(x) U^0(t,x),\\
S_{\infty}(t,x)&:=\chi_{\extt}(D)\ml{F}^{-1}\left((I_3+N_4(|\xi|)(I_3+N_5(|\xi|))\right)(x) U^{\infty}(t,x),
\end{align*}
where $U^0(t,x)$ is the solution to \eqref{Ref_Eq_Small}, and
$U^{\infty}(t,x)$ is the solution to \eqref{Ref_Eq_Large}.
The functions $S_0(t,x)$ and $S_{\infty}(t,x)$ are useful for us to describe generalized diffusion phenomena later.

From Theorem \ref{Thm_Energy_U}, concerning the estimation of $U(t,\cdot)$ in the $\dot{H}^s$ spaces with $s\geqslant 0$, the decay rate and the regularity for initial data are determined by small frequencies and large frequencies, respectively. In other words, we may explain generalized diffusion phenomena by the behavior of Fourier multipliers localized to small and large frequencies. One may follow the same approach to Theorem 4.1 in \cite{Liu-Chen-2020} to get the next theorem. Thus, we will omit the proof.
\begin{theorem}\label{Thm_Diff_Phom}
	Let us assume $U_0\in (H^{s+\ell}(\mb{R}^n)\cap L^m(\mb{R}^n))^3$ with $s\geqslant0$, $\ell\geqslant0$ and $m\in[1,2]$. Then, the solution $U=U(t,x)$ to \eqref{Eq_First_Order_x} fulfills the refinement estimates as follows:
	\begin{align*}
	\|(U-S_0)(t,\cdot)\|_{(\dot{H}^{s}(\mb{R}^n))^3} &\lesssim
	(1+t)^{-\frac{n}{2\sigma}(\frac{1}{m}-\frac{1}{2})-\frac{1}{2}}\|U_0\|_{(L^m(\mb{R}^n))^3}+(1+t)^{-\frac{\ell}{2\sigma}}\|U_0\|_{(H^{s+\ell}(\mb{R}^n))^3},\\
	\|(U-S_{\infty})(t,\cdot)\|_{(\dot{H}^{s}(\mb{R}^n))^3} &\lesssim
	(1+t)^{-\frac{n}{2\sigma}(\frac{1}{m}-\frac{1}{2})}\|U_0\|_{(L^m(\mb{R}^n))^3}+(1+t)^{-\frac{\ell}{2\sigma}}\|U_0\|_{(H^{s+\ell-\sigma}(\mb{R}^n))^3},\\
	\|(U-S_0-S_{\infty})(t,\cdot)\|_{(\dot{H}^{s}(\mb{R}^n))^3} &\lesssim
	(1+t)^{-\frac{n}{2\sigma}(\frac{1}{m}-\frac{1}{2})-\frac{1}{2}}\|U_0\|_{(L^m(\mb{R}^n))^3}+(1+t)^{-\frac{\ell}{2\sigma}}\|U_0\|_{(H^{s+\ell-\sigma}(\mb{R}^n))^3},
	\end{align*}
	for $s+\ell-\sigma\geqslant0$ and for any $t\geqslant0$.
\end{theorem}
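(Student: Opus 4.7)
The plan is to decompose each of the three differences into the three frequency zones used throughout Section \ref{Section_Linear_Problem} and match each piece with the suitable representation. Since $S_0$ carries the cutoff $\chi_{\intt}(D)$ and $S_{\infty}$ carries the cutoff $\chi_{\extt}(D)$, on the middle frequency zone $Z_{\midd}(\varepsilon,N)$ the differences $U-S_0$, $U-S_{\infty}$ and $U-S_0-S_{\infty}$ all coincide with $\chi_{\midd}(D)U$, which decays exponentially by Proposition \ref{Prop_Middle_Freq} and is trivially absorbed by the right-hand sides of the claimed estimates. Outside the relevant small or large zone, $S_0$ and $S_{\infty}$ vanish, so it remains to estimate $U$ on those zones, which is already controlled by Theorem \ref{Thm_Energy_U}. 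The non-trivial content lies in the comparison on the zones where the reference solutions are active.

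On $Z_{\intt}(\varepsilon)$, Proposition \ref{Prop_Repre_Small} writes $\hat U(t,\xi)=N_1(I_3+N_2)(I_3+N_3)\diag(\mathrm{e}^{\lambda_j(|\xi|)t})(I_3+N_3)^{-1}(I_3+N_2)^{-1}N_1^{-1}\hat U_0(\xi)$, while the reference solution to \eqref{Ref_Eq_Small} has Fourier representation $\hat{S}_0(t,\xi)=N_1(I_3+N_2)\diag(\mathrm{e}^{\lambda_j^0(|\xi|)t})(I_3+N_2)^{-1}N_1^{-1}\hat U_0(\xi)$ (the matrices $(I_3+N_2(|\xi|))^{\pm 1}$ are well-defined by a Neumann series for $|\xi|<\varepsilon\ll 1$). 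The difference then splits naturally into two contributions: (i) the discrepancy of the diagonal exponentials, controlled through the mean value theorem by
$|\mathrm{e}^{\lambda_j(|\xi|)t}-\mathrm{e}^{\lambda_j^0(|\xi|)t}|\lesssim |\xi|^{3\sigma}\,t\,\mathrm{e}^{-c|\xi|^{2\sigma}t}$,
using $\lambda_j-\lambda_j^0=\mathcal{O}(|\xi|^{3\sigma})$ together with $\Re\lambda_j^0\leqslant -c|\xi|^{2\sigma}$ (after shrinking $\varepsilon$); and (ii) the matrix correction coming from $N_3(|\xi|)=\mathcal{O}(|\xi|^{2\sigma})$. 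Inserting either factor into the argument already used in the proof of Theorem \ref{Thm_Energy_U}, the Hausdorff--Young inequality and the computation $\int_0^{\varepsilon}r^{(s+3\sigma)m_1+n-1}\mathrm{e}^{-cm_1r^{2\sigma}t}\,\mathrm{d}r$ yields an extra factor $(1+t)^{-1/2}$ beyond the decay in Theorem \ref{Thm_Energy_U}, which is exactly the gain stated in the first estimate.

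On $Z_{\extt}(N)$ the analogous strategy runs through Proposition \ref{Prop_Repre_Large}: $T_{\extt}=(I_3+N_4)(I_3+N_5)(I_3+N_6)$, while the reference solution to \eqref{Ref_Eq_Large} uses only the first two factors and the truncated eigenvalues $\lambda_j^{\infty}$. The residual is again of two types: (i) $|\mathrm{e}^{\lambda_j(|\xi|)t}-\mathrm{e}^{\lambda_j^{\infty}(|\xi|)t}|\lesssim |\xi|^{-3\sigma}\,t\,\mathrm{e}^{-c|\xi|^{-2\sigma}t}$ from $\lambda_j-\lambda_j^{\infty}=\mathcal{O}(|\xi|^{-3\sigma})$ combined with $\Re\lambda_j^{\infty}\leqslant -c|\xi|^{-2\sigma}$, and (ii) the matrix correction $N_6(|\xi|)=\mathcal{O}(|\xi|^{-3\sigma})$. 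Both of these factors carry an additional $|\xi|^{-3\sigma}$ (absorbing the gain factor of $t$ in (i) against the exponential), so applying Lemma \ref{Lemma.2} with the same decay rate but a regularity index reduced by $\sigma$ yields the second estimate. The third estimate is obtained by adding the two comparisons zone by zone, using that $S_0$ vanishes on $Z_{\extt}(N)$ and $S_{\infty}$ vanishes on $Z_{\intt}(\varepsilon)$, and absorbing the middle-zone contribution into either polynomial term. The main obstacle I anticipate is bookkeeping in step (i) of each zone, namely ensuring that after the mean value factor the real part of the surviving exponent is still uniformly bounded by $-c|\xi|^{2\sigma}$ (resp.~$-c|\xi|^{-2\sigma}$) throughout the zone, which forces an appropriate choice of $\varepsilon$ small and $N$ large; once this is done, every remaining step reduces to the elementary integral estimates already carried out in the proofs of Theorems \ref{Thm_Energy_U} and \ref{Thm_Est_Solution_Derivative}.
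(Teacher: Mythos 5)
Your proposal is correct and takes essentially the approach the paper intends: the paper omits the proof of Theorem \ref{Thm_Diff_Phom}, deferring to Theorem 4.1 of \cite{Liu-Chen-2020}, and that argument is precisely your zone-by-zone comparison of the exact representations from Propositions \ref{Prop_Repre_Small}--\ref{Prop_Middle_Freq} with the truncated ones, controlling $|\mathrm{e}^{\lambda_j(|\xi|)t}-\mathrm{e}^{\lambda_j^{0}(|\xi|)t}|$ and $|\mathrm{e}^{\lambda_j(|\xi|)t}-\mathrm{e}^{\lambda_j^{\infty}(|\xi|)t}|$ by the mean value theorem and absorbing the factor $t$ into the exponential so as to gain $|\xi|^{\sigma}$ (hence $(1+t)^{-1/2}$) at small frequencies and $|\xi|^{-\sigma}$ (hence $\sigma$ orders less regularity via Lemma \ref{Lemma.2}) at large frequencies. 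Your treatment of the matrix corrections $N_3$, $N_6$ and of the exponentially decaying middle-zone contribution is likewise consistent with the proof of Theorem \ref{Thm_Energy_U}, so no gap remains.
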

\begin{remark}
	Let us now compare the decay rate and regularity of initial data stated in Theorem \ref{Thm_Energy_U} with those in Theorem \ref{Thm_Diff_Phom}. We find that the decay rate for initial data taken from $L^m$ spaces in Theorem \ref{Thm_Diff_Phom} can be improved by $(1+t)^{-1/2}$ by subtracting $S_0(t,x)$. Additionally, by keeping the decay rate $(1+t)^{-\ell/(2\sigma)}$ the regularity of initial data in Theorem \ref{Thm_Diff_Phom} can be weakened by $\sigma$ order by subtracting $S_{\infty}(t,x)$.
\end{remark}
\begin{remark}
	According to Theorem \ref{Thm_Diff_Phom} and the approximate equations \eqref{Ref_Eq_Small} as well as \eqref{Ref_Eq_Large}, we may interpret this effect as doubly diffusion phenomena, whose reference equations consist of two different approximate equations. The doubly diffusion phenomena were firstly studied by \cite{D'Ab-Ebert-2014}.
\end{remark}
\begin{remark}
	Actually, one may assume that $u(0,x)=u_0(x)$ is not identically zero in the linear $\sigma$-evolution models \eqref{Linear_Generalized_Plate_Memory}. By processing the same calculation, one also can obtain the same results for energy estimates and generalized diffusion phenomena as in Theorem \ref{Thm_Energy_U} and Theorem \ref{Thm_Diff_Phom}, respectively.
\end{remark}
\begin{remark}
	Even taking $\sigma=2$ in Theorem \ref{Thm_Diff_Phom}, it is not reasonable to compare this result with the results in \cite{Liu-Ueda-2020}. In the recent paper \cite{Liu-Ueda-2020}, the authors derived large time approximation and found the approximation equation by concerning another ansatz
	\begin{align*}
	\widetilde{U}(t,x):=\left(\Delta u(t,x),u_t(t,x),(g\ast u)(t,x)-u(t,x)\right)^{\mathrm{T}}.
	\end{align*}
	What we do is to analyze the ansatz $U(t,x)$ defined in \eqref{Formu_U}. Hence, we are investigating a different change of variables.
\end{remark}

\section{Global (in time) existence of small data solutions}\label{Section_GESDS}
\setcounter{equation}{0}
Let us first introduce some notations which will be used in the proof of global (in time) existence of small data solutions. We denote $E=E(t,x)$ as the fundamental solution to the linear Cauchy problem \eqref{Linear_Generalized_Plate_Memory} with initial data $u(0,x)=0$ and $u_t(0,x)=\delta_0$, where $\delta_0$ is the Dirac distribution at $x=0$ with respect to the spatial variables.

Before introducing our aim, let us show the existence of the fundamental solution $E(t,x)$. It seems not trivial due to  the time-convolution term in the linear part. In the previous research \cite{Liu-Ueda-2020}, the authors considered the special case $\sigma=2$ in \eqref{Linear_Generalized_Plate_Memory} and they combined the Fourier transform with the Laplace transform to get the fundamental solution. To prove the existence of the inverse Laplace transform of the kernel functions, the authors studied the zero points. Here, one recognizes that we will use a simple proof to indicate the existence of $E(t,x)$ with the exponential decay kernel. The main idea is to get the solution to the second-order (with respect to $t$) evolution equation with memory term  from the third-order (with respect to $t$) evolution equation without memory term, which allows us to avoid the memory term in the treatment of the Laplace transform.  It is clear that the fundamental solution to 
\begin{align}\label{Fund_E_3}
\begin{cases}
\widetilde{E}_{ttt}+\widetilde{E}_{tt}+(-\Delta)^{\sigma}\widetilde{E}_t+\widetilde{E}_t+(-\Delta)^{\sigma}\widetilde{E}=0,&x\in\mb{R}^n,\ t>0,\\
\widetilde{E}(0,x)=0,\ \widetilde{E}_t(0,x)=\delta_0,\ \widetilde{E}_{tt}(0,x)=0,&x\in\mb{R}^n,
\end{cases}
\end{align}
exists by using the partial Fourier transform and its inverse transform with respect to spatial variables. According to the relation \eqref{Eq_derivative_g*u}, we may write the equation in \eqref{Fund_E_3} by
\begin{align*}
\left(\partial_t+\ml{I}\right)\left(\widetilde{E}_{tt}+(-\Delta)^{\sigma}\widetilde{E}+\widetilde{E}-g\ast\widetilde{E}\right)=0,
\end{align*}
where $\ml{I}$ denotes the unit operator mapping the function to itself. Then, by multiplying the above equation by $\mathrm{e}^{t}$ and integrating the resultant over $[0,t]$, we derive
\begin{align}\label{Fund_E_1}
\begin{cases}
\widetilde{E}_{tt}+(-\Delta)^{\sigma}\widetilde{E}+\widetilde{E}-g\ast \widetilde{E}=0,&x\in\mb{R}^n,\ t>0,\\
\widetilde{E}(0,x)=0,\ \widetilde{E}_t(0,x)=\delta_0,&x\in\mb{R}^n,
\end{cases}
\end{align}
whose solution is exactly the fundamental solution to \eqref{Linear_Generalized_Plate_Memory}. It is clear that the existence of $\widetilde{E}(t,x)$ implies immediately the existence of $E(t,x)$. Again, one may also strictly follow the proof of Lemma 2.1 in \cite{Liu-Ueda-2020} to prove the existence of the fundamental solution to \eqref{Linear_Generalized_Plate_Memory} without any new difficulty. More precisely, we just need to change $|\xi|^{4}$ in the proof of Lemma 2.1 in \cite{Liu-Ueda-2020} by $|\xi|^{2\sigma}$ with $\sigma\geqslant1$.

We now may represent the solution to the corresponding linear equation \eqref{Linear_Generalized_Plate_Memory} by
\begin{align*}
u^{\lin}(t,x):=E(t,x)\ast_{(x)}u_1(x).
\end{align*}
For $T>0$, we define the operator $N$ such that
\begin{align*}
N:\ u\in X(T)\to Nu(t,x):=u^{\lin}(t,x)+u^{\non}(t,x),
\end{align*}
where $X(T)$ is an evolution space to be determined later. Moreover, having in mind Duhamel's principle the integral operator $u^{\non}(t,x)$ can be shown by
\begin{align*}
u^{\non}(t,x):=\int_0^tE(t-\tau,x)\ast_{(x)}|u(\tau,x)|^p\mathrm{d}\tau.
\end{align*}
Then, we will prove global (in time) solution to the semilinear problem \eqref{Semi_Generalized_Plate_Memory} as a fixed point of the operator $N$. Consequently, we will prove the following two crucial inequalities in the next parts:
\begin{align}
\|Nu\|_{X(T)}&\lesssim \|u_1\|_{\ml{D}(\mb{R}^n)}+\|u\|_{X(T)}^p,\label{Eq_BDD}\\
\|Nu-Nv\|_{X(T)}&\lesssim \|u-v\|_{X(T)}\left(\|u\|_{X(T)}^{p-1}+\|v\|_{X(T)}^{p-1}\right),\label{Eq_LIP}
\end{align}
with the aim of demonstrating the global (in time) existence and uniqueness of small data solutions. Here, the data space $\ml{D}(\mb{R}^n)$ will be fixed in the statement of the theorem. Providing that $\|u_1\|_{\ml{D}(\mb{R}^n)}=\epsilon$ is sufficiently small, then the estimates \eqref{Eq_BDD} and \eqref{Eq_LIP} result the existence of a unique local and global (in time) solution in $X(T)$ by using Banach's fixed point theorem.

In order to prove global existence result, some tools from Harmonic Analysis shown in Appendix \ref{Harmonic_Anal} combined with the following lemma come into play.
\begin{lemma} \label{LemmaIntegral}
	The following inequalities:
	\begin{align*}
	\int_0^t (1+t-\tau)^{-\alpha}(1+\tau)^{-\beta}\mathrm{d}\tau \lesssim
	\begin{cases}
	(1+t)^{-\min\{\alpha, \beta\}} &\mbox{if}\ \ \max\{\alpha, \beta\}>1, \\
	(1+t)^{-\min\{\alpha, \beta\}}\log(\mathrm{e}+t)  &\mbox{if}\ \  \max\{\alpha, \beta\}=1, \\
	(1+t)^{1-\alpha-\beta} &\mbox{if}\ \ \max\{\alpha, \beta\}<1,
	\end{cases} 
	\end{align*}
	hold for all $\alpha, \beta \in \R$.
\end{lemma}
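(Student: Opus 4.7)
The plan is the standard split-at-midpoint technique for convolution-type integrals of this form. I would begin by writing
\begin{align*}
\int_0^t (1+t-\tau)^{-\alpha}(1+\tau)^{-\beta}\mathrm{d}\tau = \int_0^{t/2}(\cdots)\mathrm{d}\tau + \int_{t/2}^t(\cdots)\mathrm{d}\tau =: I_1(t)+I_2(t),
\end{align*}
exploiting the fact that on $[0,t/2]$ the factor $(1+t-\tau)$ is comparable to $(1+t)$, while on $[t/2,t]$ the factor $(1+\tau)$ is comparable to $(1+t)$. This allows one heavy factor to be pulled out of each integral, reducing the problem to the one-dimensional integrals $\int_0^{t/2}(1+s)^{-\beta}\mathrm{d}s$ and $\int_0^{t/2}(1+s)^{-\alpha}\mathrm{d}s$ (after changing variables $s=t-\tau$ in $I_2$).

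Next, I would recall the elementary dichotomy for the single-exponent integral: for $\gamma\in\mathbb{R}$,
\begin{align*}
\int_0^{t/2}(1+s)^{-\gamma}\mathrm{d}s \lesssim
\begin{cases}
1 & \text{if } \gamma>1,\\
\log(\mathrm{e}+t) & \text{if } \gamma=1,\\
(1+t)^{1-\gamma} & \text{if } \gamma<1.
\end{cases}
\end{align*}
Applying this to $I_1$ gives an upper bound of $(1+t)^{-\alpha}$, $(1+t)^{-\alpha}\log(\mathrm{e}+t)$, or $(1+t)^{-\alpha+1-\beta}$ according to whether $\beta>1$, $\beta=1$, $\beta<1$, and symmetrically for $I_2$ with $\alpha$ and $\beta$ interchanged.

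Finally, I would reduce to the assumption $\alpha \geqslant \beta$ without loss of generality (so $\min = \beta$, $\max = \alpha$) and verify the three claimed regimes by comparing the bounds for $I_1$ and $I_2$ and keeping the dominant term. If $\alpha>1$, the $I_2$-contribution $(1+t)^{-\beta}$ absorbs every possible $I_1$-contribution (because $1-\alpha<0$ gives $(1+t)^{1-\alpha-\beta}\leqslant(1+t)^{-\beta}$). If $\alpha=1$, then $\beta\leqslant 1$ and the dominant term is $(1+t)^{-\beta}\log(\mathrm{e}+t)$, coming from $I_2$. If $\alpha<1$, then $\beta\leqslant\alpha<1$ and both $I_1$ and $I_2$ yield the common bound $(1+t)^{1-\alpha-\beta}$.

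I do not expect any real obstacle in this argument; it is entirely a bookkeeping exercise once the split at $t/2$ is in place. The only delicate point is matching the borderline logarithm case correctly: one must note that when $\max\{\alpha,\beta\}=1$, exactly one of $I_1,I_2$ produces the logarithmic factor while the other produces at most a comparable power, so the logarithm survives the combination. Handling $\alpha$ or $\beta$ negative is not genuinely harder because then the monotonicity of $(1+s)^{-\gamma}$ for $\gamma<0$ still permits pulling the bigger factor out over each half-interval; the same three-case conclusion persists.
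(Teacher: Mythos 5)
Your argument is correct and complete: the split at $\tau=t/2$, the comparability of $(1+t-\tau)$ (resp.\ $(1+\tau)$) with $(1+t)$ on each half-interval for exponents of either sign, the three-way dichotomy for the single-exponent integral, and the case analysis under the symmetry reduction $\alpha\geqslant\beta$ all check out, including the borderline logarithmic case. The paper states this lemma without proof, so there is no argument to compare against; what you give is the standard proof one would expect to be cited or omitted.
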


Let us state our main result for global (in time) existence of small data solutions.

\begin{theorem} \label{theorem3.1}
	Let $\sigma\geqslant 1$, $m \in [1,2)$ and $s \in (0,\sigma)$. Let $\ell$ being subject to $\max\{\ell^{*},0\}\leqslant \ell<s$, where $\ell^{*}:=n(\frac{1}{m}-\frac{1}{2})+2s-\sigma$. We suppose that $p> 1+ \lceil \ell \rceil$ and $n<\tfrac{2m\sigma}{2-m}$ fulfilling $p\geqslant \frac{2}{m}$ for all $n\leqslant \frac{4s}{2-m}$, and additionally, $p\leqslant\frac{n-2\ell}{n-2s}$ if $n>2s$. Providing that
	\begin{align}\label{Critical_exponent}
	p>p_{\mathrm{crit}}(n,m,\sigma)= 1+\frac{2m\sigma}{n},
	\end{align}
	then there exists a sufficiently small constant $\epsilon>0$ such that for any data $ u_1 \in \mathcal{D}(\mb{R}^n):= L^m(\mb{R}^n) \cap H^{s+\ell}(\mb{R}^n)$ satisfying the assumption $\|u_1\|_{\mathcal{D}(\mb{R}^n)} \leqslant \epsilon$, there is a uniquely determined global (in time) Sobolev solution
	\begin{align*}
	u \in \ml{C}\left([0,\ity),H^s(\mb{R}^n)\right)
	\end{align*}
	to \eqref{Semi_Generalized_Plate_Memory}. Moreover, the following estimates hold:
	\begin{align*}
	\|u(t,\cdot)\|_{L^2(\mb{R}^n)}&\lesssim (1+t)^{-\frac{n}{2\sigma}(\frac{1}{m}-\frac{1}{2})} \|u_1\|_{ \mathcal{D}(\mb{R}^n)},\\
	\|\,|D|^s u(t,\cdot)\|_{L^2(\mb{R}^n)}&\lesssim (1+t)^{-\frac{n}{2\sigma}(\frac{1}{m}-\frac{1}{2})-\frac{s}{2\sigma}}\|u_1\|_{\mathcal{D}(\mb{R}^n)}.
	\end{align*}
\end{theorem}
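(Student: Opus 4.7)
The plan is to realize the global solution as the unique fixed point of the operator $N$ in the Banach space
$$X(T):=\mathcal{C}\bigl([0,T],H^s(\mb{R}^n)\bigr)$$
equipped with the time-weighted norm
$$\|u\|_{X(T)}:=\sup_{t\in[0,T]}\Bigl((1+t)^{\frac{n}{2\sigma}(\frac{1}{m}-\frac{1}{2})}\|u(t,\cdot)\|_{L^2(\mb{R}^n)}+(1+t)^{\frac{n}{2\sigma}(\frac{1}{m}-\frac{1}{2})+\frac{s}{2\sigma}}\bigl\|\,|D|^s u(t,\cdot)\bigr\|_{L^2(\mb{R}^n)}\Bigr),$$
which encodes precisely the two decay rates asserted in the statement. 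Once the inequalities \eqref{Eq_BDD} and \eqref{Eq_LIP} are established in this norm, the smallness hypothesis $\|u_1\|_{\mathcal{D}(\mb{R}^n)}\leqslant\epsilon$ together with Banach's fixed point theorem produce a unique global solution, the bounds being uniform in $T$ so that $T\to\infty$ is admissible.

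For the linear part $u^{\lin}(t,x)=E(t,x)\ast_{(x)}u_1(x)$ I would directly invoke Theorem \ref{Thm_Est_Solution_Derivative} (for the $L^2$ norm) and Corollary \ref{Cor_Est_Solution_Derivative} (for the $\dot H^s$ norm in its refined form $(1+t)^{-1/2-\ell/(2\sigma)+s/(2\sigma)}$). Demanding that the regularity-loss contributions be dominated by the $L^m$ contributions in the $X(T)$ norm produces exactly the lower bound $\ell\geqslant n(\frac{1}{m}-\frac{1}{2})+2s-\sigma=\ell^{*}$, while the assumption $n<2m\sigma/(2-m)$ is what makes this requirement compatible with the upper bound $\ell<s$. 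Hence $\|u^{\lin}\|_{X(T)}\lesssim\|u_1\|_{\mathcal{D}(\mb{R}^n)}$ follows immediately.

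The heart of the proof is the integral term $u^{\non}(t,x)=\int_0^t E(t-\tau,x)\ast_{(x)}|u(\tau,x)|^p\,\mathrm{d}\tau$. I would split the time interval at $t/2$, using the $L^m$-based linear estimates on $[0,t/2]$ (where $1+t-\tau\sim 1+t$) and the $L^2$-based linear estimates on $[t/2,t]$ (where $1+\tau\sim 1+t$), with the datum replaced by $|u(\tau,\cdot)|^p$. To bound $\|\,|u|^p\|_{L^q}=\|u\|_{L^{pq}}^p$ for $q\in\{m,2\}$, the fractional Gagliardo-Nirenberg inequality yields
$$\|u(\tau,\cdot)\|_{L^{pq}(\mb{R}^n)}\lesssim\bigl\|\,|D|^s u(\tau,\cdot)\bigr\|_{L^2}^{\theta_q}\|u(\tau,\cdot)\|_{L^2}^{1-\theta_q},\qquad \theta_q=\tfrac{n}{s}\Bigl(\tfrac{1}{2}-\tfrac{1}{pq}\Bigr);$$
the requirement $\theta_q\in[0,1]$ encodes precisely the conditions $p\geqslant 2/m$ for $n\leqslant 4s/(2-m)$ and $p\leqslant(n-2\ell)/(n-2s)$ for $n>2s$ imposed in the theorem. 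For the $\dot H^{s+\ell}$-norm of $|u|^p$ I would apply the fractional Leibniz rule to decompose $|D|^{s+\ell}(|u|^{p-1}u)$, and then the fractional chain rule on the factor $|u|^{p-1}$; the assumption $p>1+\lceil\ell\rceil$ supplies exactly the Hölder regularity at order $\ell$ that the chain rule requires, with the additional $s$ derivatives being absorbed by $u$ through Leibniz.

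Substituting these bounds into the Duhamel integrals produces kernels of the form $(1+t-\tau)^{-\alpha}(1+\tau)^{-p\beta}$ with $\beta=\frac{n}{2\sigma}(\frac{1}{m}-\frac{1}{2})$ (up to the exponent $s/(2\sigma)$ in the $\dot H^s$ component), and Lemma \ref{LemmaIntegral} reduces matters to a single critical requirement $p\beta>1$, i.e.\ $p>1+\frac{2m\sigma}{n}=p_{\mathrm{crit}}(n,m,\sigma)$. This is the main obstacle and coincides with the supercriticality hypothesis \eqref{Critical_exponent}; the strict inequality rules out the logarithmic resonance case $p\beta=1$. Finally, the Lipschitz estimate \eqref{Eq_LIP} follows along the same route, using the pointwise bound $\bigl||u|^p-|v|^p\bigr|\lesssim|u-v|\bigl(|u|^{p-1}+|v|^{p-1}\bigr)$ and Hölder's inequality before invoking the Gagliardo-Nirenberg/Leibniz/chain-rule machinery, which naturally delivers the product structure on the right-hand side.
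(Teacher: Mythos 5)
Your overall architecture --- the weighted space $X(T)$ with the two decay weights, the splitting of the Duhamel integral at $t/2$, the fractional Gagliardo--Nirenberg inequality for $\|\,|u|^p\|_{L^m}$ and $\|\,|u|^p\|_{L^2}$, and Lemma \ref{LemmaIntegral} reducing everything to $p>p_{\mathrm{crit}}(n,m,\sigma)$ --- coincides with the paper's proof. However, one step fails as written: you propose to control the $\dot{H}^{s+\ell}$ norm of $|u|^p$ by combining the fractional Leibniz and chain rules, with ``the additional $s$ derivatives being absorbed by $u$ through Leibniz''. Since the solution is only constructed in $H^s(\mb{R}^n)$ and $\ell>0$, no combination of Proposition \ref{fractionleibnizrule}, Proposition \ref{fractionalchainrule} and Proposition \ref{fractionalgagliardonirenbergineq} can produce $s+\ell>s$ derivatives of $u$: the Gagliardo--Nirenberg inequality requires the target order $\kappa$ to be strictly below $s$, and the Kato--Ponce inequality places all $s+\ell$ derivatives on a single factor rather than splitting them between $|u|^{p-1}$ and $u$. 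Hence $\|\,|u(\tau,\cdot)|^p\|_{\dot{H}^{s+\ell}(\mb{R}^n)}$ is not available in your framework, and the Duhamel integral for $\|\,|D|^s u^{\non}(t,\cdot)\|_{L^2(\mb{R}^n)}$ cannot be closed along this route.

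The paper circumvents precisely this obstruction, and this is the key point that makes low-regularity global existence work in the regularity-loss setting: it invokes the first alternative of Corollary \ref{Cor_Est_Solution_Derivative}, in which the $\dot H^s$ norm of the linear solution is controlled by only the $H^{\ell}$ norm of the datum, at the price of the weaker decay factor $(1+t-\tau)^{-\frac12-\frac{\ell}{2\sigma}+\frac{s}{2\sigma}}$. Consequently only $\|\,|u(\tau,\cdot)|^p\|_{H^{\ell}(\mb{R}^n)}$ is needed, and since $\ell<s$ this follows from the fractional chain rule alone (estimate \eqref{t3.1.3}); the Leibniz rule enters only in the Lipschitz estimate for $|u|^p-|v|^p$. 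The loss $+\frac{s}{2\sigma}$ in the kernel is then compensated exactly by the hypothesis $\ell\geqslant\ell^{*}=n(\frac1m-\frac12)+2s-\sigma$, which gives $\frac12+\frac{\ell}{2\sigma}-\frac{s}{2\sigma}\geqslant\frac{n}{2\sigma}(\frac1m-\frac12)+\frac{s}{2\sigma}$. Two smaller corrections: the condition $n\leqslant\frac{2m\sigma}{2-m}$ is also used inside the Duhamel estimate itself, to guarantee $\frac{n}{2\sigma}(\frac1m-\frac12)\leqslant\frac12$ when Lemma \ref{LemmaIntegral} is applied to the kernel $(1+t-\tau)^{-1/2}$, not only to make $\ell^{*}<s$ consistent; and the boundedness estimate needs only $p>\lceil\ell\rceil$, the stronger assumption $p>1+\lceil\ell\rceil$ being required when the chain rule is applied to $G(w)=w|w|^{p-2}$ in the contraction estimate.
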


\begin{exam}
	Let us consider the case when $m=1$ in Theorem \ref{theorem3.1}. Moreover, we take $2s=\sigma+1-n/2$ so that $\ell=\ell^*=1$. We assume initial data $u_1$ belonging to $L^1(\mb{R}^n)\cap H^{(\sigma+3)/2-n/4}(\mb{R}^n)$ and being sufficiently small. Then, there exists a unique determined Sobolev solution
	\begin{align*}
	u\in\ml{C}\left([0,\infty),H^{(\sigma+1)/2-n/4}(\mb{R}^n)\right)
	\end{align*}
	to \eqref{Semi_Generalized_Plate_Memory} with $\sigma>n/2+1$ and $\sigma\geqslant n-1$, providing that the exponent $p$ of the power nonlinearity satisfies the next conditions:
	\begin{itemize}
		\item for $n=1,2$, we consider $p>p_{\mathrm{crit}}(n,1,\sigma)$;
		\item for $n\geqslant 3$, we consider $p>p_{\mathrm{crit}}(n,1,\sigma)$, and additionally, $p\leqslant\frac{2n-4}{3n-2(\sigma+1)}$ if $\frac{2(\sigma+1)}{3}<n<n_0(\sigma)$, where $n_0(\sigma)$ denotes the positive root of the quadratic equation $n^2+2(2\sigma+1)n-4(\sigma^2+\sigma)=0$. It is obvious to verify $\frac{2(\sigma+1)}{3}<n_0(\sigma)$ holds for any $\sigma\geqslant 2$ and $n\geqslant 3$.
	\end{itemize}
	We want to underline that the condition $n<n_0(\sigma)$ for $n\geqslant3$ is to guarantee the nonempty interval for $p$ such that $p\in(p_{\mathrm{crit}}(n,1,\sigma),\frac{2n-4}{3n-2(\sigma+1)}]$. Moreover, due to $\sigma>n/2+1$, the condition $n>2s$ implies the condition $n>2$, which is the reason for dividing our consideration into the cases $n=1,2$ and $n\geqslant 3$.
\end{exam}
\begin{remark}\label{nonempty s}
	The restriction $\ell^*\leqslant \ell<s$ leads to $s<\sigma- n(\frac{1}{m}-\frac{1}{2})$. Concerning the existence of parameter $s \in (0,\sigma)$, we would like to say
	\begin{align*}
	0<\sigma-n\left(\frac{1}{m}-\frac{1}{2}\right)\ \ \mbox{iff}\ \ n<\frac{2m\sigma}{2-m}.
	\end{align*}
	In other words, the restriction on dimensions in Theorem \ref{theorem3.1} at least should be $n<\frac{2m\sigma}{2-m}$.
\end{remark}
\begin{remark}
	Actually, by fixing some parameters $\sigma$ and $n$ in Theorem \ref{theorem3.1}, there are a lot of examples showing that the lower bound of the exponent $p$ is given by $p_{\mathrm{crit}}(n,m,\sigma)$ if we choose some suitable parameters on the regularities  $s,m,\ell$ of initial data. For example, if we take $\sigma=2$ and $n=1$, then we arrive at $p>p_{\mathrm{crit}}(1,1,2)=5$ with the choice of $s=1$, $m=1$ and $\ell=1/2$.
\end{remark}
\begin{remark}
	In the case when $n>2s$ in Theorem \ref{theorem3.1}, we may weaken the upper bound for the exponent $p$ and the dimension $n$ to obtain their larger admissible range, e.g. we may consider $s-n/2\to 0^-$ and $m-2\to 0^-$, respectively. We should pay attention on other restrictions of regularity $s$, e.g. $s\in(0,\sigma)$.
\end{remark}
\begin{remark}
	Inherited by the linear Cauchy problem \eqref{Linear_Generalized_Plate_Memory}, we also observe the effect of regularity-loss in Theorem \ref{theorem3.1}. Precisely, to derive global (in time) existence of Sobolev solutions with $H^s$ regularity, we need to assume initial data having $H^{s+\ell}$ regularity.
\end{remark}

\begin{proof}
	We define the solution space for $T>0$ by
	$$ X(T):= \ml{C}\left([0,T],H^s(\mb{R}^n)\right), $$
	equipped with the corresponding norm
	$$ \|u\|_{X(T)}:= \sup_{0\leqslant t \leqslant T} \left((1+t)^{\frac{n}{2\sigma}(\frac{1}{m}-\frac{1}{2})}\|u(t,\cdot)\|_{L^2(\mb{R}^n)}+ (1+t)^{\frac{n}{2\sigma}(\frac{1}{m}-\frac{1}{2})+\frac{s}{2\sigma}}\|\,|D|^s u(t,\cdot)\|_{L^2(\mb{R}^n)}\right). $$
	
	Before indicating the both inequalities \eqref{Eq_BDD} and \eqref{Eq_LIP}, we need the following auxiliary estimates:
	\begin{align}
	\|\,|u(\tau,\cdot)|^p\|_{L^m(\mb{R}^n)}= \|u(\tau,\cdot)\|^p_{L^{mp}(\mb{R}^n)} &\lesssim (1+\tau)^{-\frac{n}{2m\sigma}(p-1)}\|u\|^p_{X(\tau)}, \label{t3.1.1}\\ 
	\|\,|u(\tau,\cdot)|^p\|_{L^2(\mb{R}^n)}= \|u(\tau,\cdot)\|^p_{L^{2p}(\mb{R}^n)} &\lesssim (1+\tau)^{-\frac{n}{2\sigma}(\frac{p}{m}-\frac{1}{2})}\|u\|^p_{X(\tau)}, \label{t3.1.2}\\
	\|\,|u(\tau,\cdot)|^p\|_{\dot{H}^\ell(\mb{R}^n)} &\lesssim (1+\tau)^{-\frac{n}{2\sigma}(\frac{p}{m}-\frac{1}{2})-\frac{\ell}{2\sigma}}\|u\|^p_{X(\tau)}. \label{t3.1.3}
	\end{align}
	Indeed, the first two estimates in the above can be directly obtained from the application of the fractional Gagliardo-Nirenberg inequality from Proposition \ref{fractionalgagliardonirenbergineq} and the definition of the evolution space, provided that
	$$ p \in \left[\frac{2}{m}, \ity \right) \ \ \text{if} \  \  n \leqslant 2s, \ \ \text{or}\ \  p \in \left[\frac{2}{m}, \frac{n}{n-2s}\right] \ \  \text{if}\ \  n >2s. $$
	To prove \eqref{t3.1.3}, employing the fractional chain rule from Proposition \ref{fractionalchainrule} with $p >\lceil \ell \rceil$ we can proceed as follows:
	$$\|\,|u(\tau,\cdot)|^p\|_{\dot{H}^\ell(\mb{R}^n)}\lesssim \|u(\tau,\cdot)\|^{p-1}_{L^{r_1}(\mb{R}^n)}\,\|u(\tau,\cdot)\|_{\dot{H}^\ell_{r_2}(\mb{R}^n)},$$
	where $\tfrac{1}{2}=\tfrac{p-1}{r_1}+\tfrac{1}{r_2}$ with $1<r_1,r_2<\infty$.\\
	Then, applying the fractional Gagliardo-Nirenberg inequality from Proposition \ref{fractionalgagliardonirenbergineq} gives
	\begin{align*}
	\|u(\tau,\cdot)\|_{L^{r_1}(\mb{R}^n)} \lesssim \|u(\tau,\cdot)\|^{1-\theta_{r_1}}_{L^2(\mb{R}^n)}\,\|\,|D|^s u(\tau,\cdot)\|^{\theta_{r_1}}_{L^2(\mb{R}^n)}&\lesssim (1+\tau)^{-\frac{n}{2\sigma}(\frac{1}{m}-\frac{1}{2})-\frac{s}{2\sigma}\theta_{r_1}}\|u\|_{X(\tau)} \\ 
	&\lesssim (1+\tau)^{-\frac{n}{2\sigma}(\frac{1}{m}-\frac{1}{r_1})}\|u\|_{X(\tau)},
	\end{align*}
	where $\theta_{r_1}=\frac{n}{s}(\frac{1}{2}-\frac{1}{r_1})$, and similarly
	\begin{align*}
	\|u(\tau,\cdot)\|_{\dot{H}^\ell_{r_2}(\mb{R}^n)}\lesssim \|u(\tau,\cdot)\|^{1-\theta_{r_2}}_{L^2(\mb{R}^n)}\,\|\,|D|^s u(\tau,\cdot)\|^{\theta_{r_2}}_{L^2(\mb{R}^n)}&\lesssim (1+\tau)^{-\frac{n}{2\sigma}(\frac{1}{m}-\frac{1}{2})-\frac{s}{2\sigma}\theta_{r_2}}\|u\|_{X(\tau)} \\ 
	&\lesssim (1+\tau)^{-\frac{n}{2\sigma}(\frac{1}{m}-\frac{1}{r_2})-\frac{\ell}{2\sigma}}\|u\|_{X(\tau)},
	\end{align*}
	where $\theta_{r_2}=\frac{n}{s}(\frac{1}{2}-\frac{1}{r_2}+\frac{\ell}{n})$ and we considered $\ell\leqslant s$. For this reason, we may conclude by summing up the previous derived inequalities
	$$\|\,|u(\tau,\cdot)|^p\|_{\dot{H}^\ell(\mb{R}^n)}\lesssim (1+\tau)^{-\frac{n}{2\sigma}(\frac{p}{m}-\frac{1}{2})-\frac{\ell}{2\sigma}}\|u\|_{X(\tau)}. $$
	Here, we have to guarantee that $\theta_{r_1}\in [0,1]$ and $\theta_{r_2}\in [\ell/s,1]$, which imply the restrictions
	$$1<p \leqslant \frac{n-2\ell}{n-2s}\ \  \text{if}\ \  n>2s, \quad \text{or}\quad p>1 \ \ \text{if}\ \ n \leqslant 2s. $$
	
	\textit{First let us prove the inequality \eqref{Eq_BDD}.} From the estimates for solutions to the linear Cauchy problem \eqref{Linear_Generalized_Plate_Memory}, which are shown in Theorem \ref{Thm_Est_Solution_Derivative} and Corollary \ref{Cor_Est_Solution_Derivative}, one may derive
	\begin{align*}
	\|u^{\lin}(t,\cdot)\|_{L^2(\mb{R}^n)}&\lesssim (1+t)^{-\frac{n}{2\sigma}(\frac{1}{m}-\frac{1}{2})}\|u_1\|_{\ml{D}(\mb{R}^n)},\\
	\|\,|D|^su^{\lin}(t,\cdot)\|_{L^2(\mb{R}^n)}&\lesssim (1+t)^{-\frac{n}{2\sigma}(\frac{1}{m}-\frac{1}{2})-\frac{s}{2\sigma}}\|u_1\|_{\ml{D}(\mb{R}^n)},
	\end{align*}
	where we used our assumption $\ell\geqslant\ell^*>\ell^*-s$. For this reason, we immediately claim $u^{\lin}\in X(T)$. From the definition of the data space, it is obvious that we just need to prove the following inequality instead of \eqref{Eq_BDD}:
	\begin{equation}
	\|u^{\non}\|_{X(T)} \lesssim \|u\|^p_{X(T)}. \label{Eq_BDD1}
	\end{equation}
	Our proof is divided into two steps. \medskip
	
	Step 1: $\quad$ We may estimate $u^{\non}(t,\cdot)$ in the $L^2$ norm by applying $(L^2\cap L^m)-L^2$ estimates in $[0,t/2]$ and $L^2-L^2$ estimate in $[t/2,t]$ as follows:
	\begin{align*}
	\|u^{\non}(t,\cdot)\|_{L^2(\mb{R}^n)}&\lesssim \int_0^{t/2} (1+t-\tau)^{-\frac{n}{2\sigma}(\frac{1}{m}-\frac{1}{2})} \|\,|u(\tau,\cdot)|^p\|_{L^m(\mb{R}^n)} \mathrm{d}\tau+ \int_{t/2}^t \|\,|u(\tau,\cdot)|^p\|_{L^2(\mb{R}^n)} \mathrm{d}\tau \\
	&\quad + \int_0^t (1+t-\tau)^{-\frac{1}{2}}\|\,|u(\tau,\cdot)|^p\|_{L^2(\mb{R}^n)} \mathrm{d}\tau \\
	&\lesssim \Big(\int_0^{t/2} (1+t-\tau)^{-\frac{n}{2\sigma}(\frac{1}{m}-\frac{1}{2})}(1+\tau)^{-\frac{n}{2m\sigma}(p-1)} \mathrm{d}\tau+ \int_{t/2}^t (1+\tau)^{-\frac{n}{2\sigma}(\frac{p}{m}-\frac{1}{2})} \mathrm{d}\tau\Big)\|u\|^p_{X(T)} \\
	&\quad +\int_0^t (1+t-\tau)^{-\frac{1}{2}}\|\,|u(\tau,\cdot)|^p\|_{L^2(\mb{R}^n)} \mathrm{d}\tau,
	\end{align*}
	where we have used \eqref{t3.1.1} as well as \eqref{t3.1.2}, and $\|\cdot\|_{X(\tau)}\lesssim\|\cdot\|_{X(T)}$ for any $0\leqslant \tau\leqslant T$. Since $p> p_{\mathrm{crit}}(n,m,\sigma)$, it follows immediately $-\frac{n}{2m\sigma}(p-1)< -1$. For the first two integrals, using the relations $(1+t- \tau) \approx (1+t)$ if $\tau \in [0,t/2]$ and $(1+\tau) \approx (1+t)$ if $\tau \in [t/2,t]$ one derives from the integrability that
	\begin{align*}
	\int_0^{t/2} (1+t-\tau)^{-\frac{n}{2\sigma}(\frac{1}{m}-\frac{1}{2})}(1+\tau)^{-\frac{n}{2m\sigma}(p-1)} \mathrm{d}\tau &\lesssim (1+t)^{-\frac{n}{2\sigma}(\frac{1}{m}-\frac{1}{2})}, \\ 
	\int_{t/2}^t (1+\tau)^{-\frac{n}{2\sigma}(\frac{p}{m}-\frac{1}{2})} \mathrm{d}\tau &\lesssim (1+t)^{-\frac{n}{2\sigma}(\frac{1}{m}-\frac{1}{2})}.
	\end{align*}
	Here, we employed $1-\frac{n}{2\sigma}(\frac{p}{m}-\frac{1}{2})<-\frac{n}{2\sigma}(\frac{1}{m}-\frac{1}{2})$ in the second integral due to our assumption $p>p_{\mathrm{crit}}(n,m,\sigma)$ again.\\
	The applications of Lemma \ref{LemmaIntegral} and \eqref{t3.1.2} lead to the following estimate:
	\begin{align*}
	\int_0^t (1+t-\tau)^{-\frac{1}{2}}\|\,|u(\tau,\cdot)|^p\|_{L^2(\mb{R}^n)} \mathrm{d}\tau&\lesssim \|u\|^p_{X(T)}\int_0^t (1+t-\tau)^{-\frac{1}{2}}(1+\tau)^{-\frac{n}{2\sigma}(\frac{p}{m}-\frac{1}{2})} \mathrm{d}\tau \\ 
	&\lesssim (1+t)^{-\min\{\frac{1}{2},\frac{n}{2\sigma}(\frac{p}{m}-\frac{1}{2})\}}\|u\|^p_{X(T)}\\
	& \lesssim (1+t)^{-\frac{n}{2\sigma}(\frac{1}{m}-\frac{1}{2})}\|u\|^p_{X(T)},
	\end{align*}
	provided that $n\leqslant \frac{2m\sigma}{2-m}$. In the first line of the above chain inequality, we used $p>p_{\mathrm{crit}}(n,m,\sigma)$ so that $\max\{\frac{1}{2},\frac{n}{2\sigma}(\frac{p}{m}-\frac{1}{2})\}=\frac{n}{2\sigma}(\frac{p}{m}-\frac{1}{2})>1$. As a result, combining the above estimates we have proved that
	$$ \|u^{\non}(t,\cdot)\|_{L^2(\mb{R}^n)}\lesssim  (1+t)^{-\frac{n}{2\sigma}(\frac{1}{m}-\frac{1}{2})}\|u\|^p_{X(T)}. $$
	
	Step 2: $\quad$ By using the same ideas as Step 1 and applying some derived estimates from \eqref{t3.1.1} to \eqref{t3.1.3}, we may control the remaining term $|D|^s u^{\non}(t,\cdot)$ in the $L^2$ norm as follows:
	\begin{align*}
	\|\,|D|^s u^{\non}(t,\cdot)\|_{L^2(\mb{R}^n)}&\lesssim \int_0^{t/2} (1+t-\tau)^{-\frac{n}{2\sigma}(\frac{1}{m}-\frac{1}{2})-\frac{s}{2\sigma}} \|\,|u(\tau,\cdot)|^p\|_{L^m(\mb{R}^n)} \mathrm{d}\tau\\
	&\quad + \int_{t/2}^t (1+t-\tau)^{-\frac{s}{2\sigma}}\|\,|u(\tau,\cdot)|^p\|_{L^2(\mb{R}^n)}\mathrm{d}\tau \\
	&\quad+ \int_0^t (1+t-\tau)^{-\frac{1}{2}-\frac{\ell}{2\sigma}+\frac{s}{2\sigma}}\|\,|u(\tau,\cdot)|^p\|_{H^\ell(\mb{R}^n)} \mathrm{d}\tau \\
	&\lesssim \int_0^{t/2} (1+t-\tau)^{-\frac{n}{2\sigma}(\frac{1}{m}-\frac{1}{2})-\frac{s}{2\sigma}}(1+\tau)^{-\frac{n}{2m\sigma}(p-1)} \mathrm{d}\tau\|u\|^p_{X(T)}\\
	&\quad + \int_{t/2}^t (1+t-\tau)^{-\frac{s}{2\sigma}}(1+\tau)^{-\frac{n}{2\sigma}(\frac{p}{m}-\frac{1}{2})} \mathrm{d}\tau \|u\|^p_{X(T)} \\
	&\quad +\int_0^t (1+t-\tau)^{-\frac{\ell}{2\sigma}-\frac{1}{2}+\frac{s}{2\sigma}}\|\,|u(\tau,\cdot)|^p\|_{H^\ell(\mb{R}^n)} \mathrm{d}\tau.
	\end{align*}
	Then, repeating some arguments as we did in Step $1$ we may conclude the following estimates:
	\begin{align*}
	\int_0^{t/2} (1+t-\tau)^{-\frac{n}{2\sigma}(\frac{1}{m}-\frac{1}{2})-\frac{s}{2\sigma}}(1+\tau)^{-\frac{n}{2m\sigma}(p-1)} \mathrm{d}\tau &\lesssim (1+t)^{-\frac{n}{2\sigma}(\frac{1}{m}-\frac{1}{2})-\frac{s}{2\sigma}}, \\
	\int_{t/2}^t (1+t-\tau)^{-\frac{s}{2\sigma}}(1+\tau)^{-\frac{n}{2\sigma}(\frac{p}{m}-\frac{1}{2})} \mathrm{d}\tau &\lesssim (1+t)^{-\frac{n}{2\sigma}(\frac{1}{m}-\frac{1}{2})-\frac{s}{2\sigma}},
	\end{align*}
	by using $s<\sigma$ and $p>p_{\mathrm{crit}}(n,m,\sigma)$.\\
	To deal with the remaining integral, from \eqref{t3.1.2} and \eqref{t3.1.3} we notice that
	$$ \|\,|u(\tau,\cdot)|^p\|_{H^\ell(\mb{R}^n)}\lesssim \|\,|u(\tau,\cdot)|^p\|_{L^2(\mb{R}^n)}+ \|\,|u(\tau,\cdot)|^p\|_{\dot{H}^\ell(\mb{R}^n)}\lesssim (1+\tau)^{-\frac{n}{2\sigma}(\frac{p}{m}-\frac{1}{2})}\|u\|_{X(\tau)}^p. $$
	Consequently, applying Lemma \ref{LemmaIntegral} we may arrive at
	\begin{align*}
	\int_0^t (1+t-\tau)^{-\frac{\ell}{2\sigma}-\frac{1}{2}+\frac{s}{2\sigma}}\|\,|u(\tau,\cdot)|^p\|_{H^\ell(\mb{R}^n)} \mathrm{d}\tau&\lesssim \|u\|^p_{X(T)}\int_0^t (1+t-\tau)^{-\frac{1}{2}-\frac{\ell}{2\sigma}+\frac{s}{2\sigma}}(1+\tau)^{-\frac{n}{2\sigma}(\frac{p}{m}-\frac{1}{2})} \mathrm{d}\tau \\ 
	&\lesssim (1+t)^{-\min\{\frac{1}{2}+\frac{\ell}{2\sigma}-\frac{s}{2\sigma},\,\frac{n}{2\sigma}(\frac{p}{m}-\frac{1}{2})\}}\|u\|^p_{X(T)} \\
	&\lesssim (1+t)^{-\frac{n}{2\sigma}(\frac{1}{m}-\frac{1}{2})-\frac{s}{2\sigma}}\|u\|^p_{X(T)},
	\end{align*}
	where the condition $\ell\geqslant \max\{\ell^{*},0\} $ is fulfilled. Precisely, due to the assumption $\ell<s$ and $p>p_{\mathrm{crit}}(n,m,\sigma)$, we find
	\begin{align*}
	\frac{1}{2}+\frac{\ell}{2\sigma}-\frac{s}{2\sigma}<\frac{n}{2\sigma}\left(\frac{p}{m}-\frac{1}{2}\right)
	\end{align*}
	for any $\sigma\geqslant 1$ and $m\in[1,2)$. Therefore, we obtain
	$$ \|\,|D|^s u^{\non}(t,\cdot)\|_{L^2(\mb{R}^n)}\lesssim (1+t)^{-\frac{n}{2\sigma}(\frac{1}{m}-\frac{1}{2})-\frac{s}{2\sigma}}\|u\|^p_{X(T)}. $$
	From the definition of the norm in $X(T)$ we obtain immediately the inequality \eqref{Eq_BDD1}.
	
	\textit{Next let us prove the inequality \eqref{Eq_LIP}.} We shall follow the strategy used in the proof of the inequality \eqref{Eq_BDD1}. The new difficulty is to require the estimates for the term $|u(\tau,\cdot)|^p-|v(\tau,\cdot)|^p$ in the Lebesgue spaces $L^m$, $L^2$ and the homogeneous Sobolev spaces $\dot{H}^\ell$. Following an analogous treatment as in the proof of the inequality \eqref{Eq_BDD1} we may conclude the inequality \eqref{Eq_LIP}. Indeed, by using H\"{o}lder's inequality we get
	\begin{align*}
	\|\,|u(\tau,\cdot)|^p-|v(\tau,\cdot)|^p\|_{L^m(\mb{R}^n)} &\lesssim \|u(\tau,\cdot)- v(\tau,\cdot)\|_{L^{mp}(\mb{R}^n)} \left(\|u(\tau,\cdot)\|^{p-1}_{L^{mp}(\mb{R}^n)}+\|v(\tau,\cdot)\|^{p-1}_{L^{mp}(\mb{R}^n)}\right), \\ 
	\|\,|u(\tau,\cdot)|^p- |v(\tau,\cdot)|^p\|_{L^2(\mb{R}^n)} &\lesssim \|u(\tau,\cdot)- v(\tau,\cdot)\|_{L^{2p}(\mb{R}^n)} \left(\|u(\tau,\cdot)\|^{p-1}_{L^{2p}(\mb{R}^n)}+\|v(\tau,\cdot)\|^{p-1}_{L^{2p}(\mb{R}^n)}\right).
	\end{align*}
	Analogously to the proof of \eqref{Eq_BDD1}, applying the fractional Gagliardo-Nirenberg inequality from Proposition \ref{fractionalgagliardonirenbergineq} to deal with the norms
	$$ \|u(\tau,\cdot)- v(\tau,\cdot)\|_{L^\eta(\mb{R}^n)}, \quad \|u(\tau,\cdot)\|_{L^\eta(\mb{R}^n)}, \quad \|v(\tau,\cdot)\|_{L^\eta(\mb{R}^n)} $$
	with $\eta=mp$ and $\eta=2p$ we deduce the following estimates:
	\begin{align*}
	\|\,|u(\tau,\cdot)|^p-|v(\tau,\cdot)|^p\|_{L^m(\mb{R}^n)} &\lesssim (1+\tau)^{-\frac{n}{2m\sigma}(p-1)}\|u-v\|_{X(\tau)}\left(\|u\|^{p-1}_{X(\tau)}+\|v\|^{p-1}_{X(\tau)}\right),\\
	\|\,|u(\tau,\cdot)|^p- |v(\tau,\cdot)|^p\|_{L^2(\mb{R}^n)} &\lesssim (1+\tau)^{-\frac{n}{2\sigma}(\frac{p}{m}-\frac{1}{2})} \|u-v\|_{X(\tau)}\left(\|u\|^{p-1}_{X(\tau)}+\|v\|^{p-1}_{X(\tau)}\right).
	\end{align*}
	Let us now turn to estimate the norm $$\|\,|u(\tau,\cdot)|^p-|v(\tau,\cdot)|^p\|_{\dot{H}^\ell(\mb{R}^n)}.$$ By using the integral representation
	$$ |u(\tau,x)|^p-|v(\tau,x)|^p=p\int_0^1 \big(u(\tau,x)-v(\tau,x)\big)G\big(\omega u(\tau,x)+(1-\omega)v(\tau,x)\big)\mathrm{d}\omega, $$
	where $G(u)=u|u|^{p-2}$, we derive
	\begin{align*}
	\|\,|u(\tau,\cdot)|^p-|v(\tau,\cdot)|^p\|_{\dot{H}^\ell(\mb{R}^n)}\lesssim \int_0^1 \big\|\big(u(\tau,\cdot)-v(\tau,\cdot)\big)G\big(\omega u(\tau,\cdot)+(1-\omega)v(\tau,\cdot)\big)\big\|_{\dot{H}^\ell(\mb{R}^n)}\mathrm{d}\omega .
	\end{align*}
	For sake of the fractional Leibniz rule from Proposition \ref{fractionleibnizrule} and Minkowski's inequality, we gain
	\begin{align*}
	&\|\,|u(\tau,\cdot)|^p-|v(\tau,\cdot)|^p\|_{\dot{H}^\ell(\mb{R}^n)}\\
	&\quad \lesssim \|u(\tau,\cdot)-v(\tau,\cdot)\|_{\dot{H}^\ell_{q_1}(\mb{R}^n)} \int_0^1 \big\|G\big(\omega u(\tau,\cdot)+(1-\omega)v(\tau,\cdot)\big)\big\|_{L^{q_2}(\mb{R}^n)}\mathrm{d}\omega\\
	&\qquad+ \big\|u(\tau,\cdot)-v(\tau,\cdot)\|_{L^{q_3}(\mb{R}^n)}  \int_0^1 \big\|G\big(\omega u(\tau,\cdot)+(1-\omega)v(\tau,\cdot)\big)\big\|_{\dot{H}^\ell_{q_4}(\mb{R}^n)}\mathrm{d}\omega\\
	&\quad \lesssim \|u(\tau,\cdot)-v(\tau,\cdot)\|_{\dot{H}^\ell_{q_1}(\mb{R}^n)} \left(\|u(\tau,\cdot)\|^{p-1}_{L^{q_2 (p-1)}(\mb{R}^n)}+ \|v(\tau,\cdot)\|^{p-1}_{L^{q_2 (p-1)}(\mb{R}^n)}\right)\\
	&\qquad+ \|u(\tau,\cdot)-v(\tau,\cdot)\|_{L^{q_3}(\mb{R}^n)} \int_0^1 \big\|G\big(u(\tau,\cdot)+(1-\omega)v(\tau,\cdot)\big)\big\|_{\dot{H}^\ell_{q_4}(\mb{R}^n)}\mathrm{d}\omega,
	\end{align*}
	where $\frac{1}{2}=\frac{1}{q_1}+\frac{1}{q_2}=\frac{1}{q_3}+\frac{1}{q_4}$. Employing the fractional Gargliardo-Nirenberg inequality from Proposition \ref{fractionalgagliardonirenbergineq} follows
	\begin{align*}
	\|u(\tau,\cdot)-v(\tau,\cdot)\|_{\dot{H}^\ell_{q_1}(\mb{R}^n)}&\lesssim \|u(\tau,\cdot)-v(\tau,\cdot)\|^{\theta_1}_{\dot{H}^s(\mb{R}^n)}\,\|u(\tau,\cdot)-v(\tau,\cdot)\|^{1-\theta_1}_{L^2(\mb{R}^n)}, \\
	\|u(\tau,\cdot)\|_{L^{q_2 (p-1)}(\mb{R}^n)}&\lesssim \|u(\tau,\cdot)\|^{\theta_2}_{\dot{H}^s(\mb{R}^n)}\,\|u(\tau,\cdot)\|^{1-\theta_2}_{L^2(\mb{R}^n)}, \\
	\|v(\tau,\cdot)\|_{L^{q_2 (p-1)}(\mb{R}^n)}&\lesssim \|v(\tau,\cdot)\|^{\theta_2}_{\dot{H}^s(\mb{R}^n)}\,\|v(\tau,\cdot)\|^{1-\theta_2}_{L^2(\mb{R}^n)}, \\
	\|u(\tau,\cdot)-v(\tau,\cdot)\|_{L^{q_3}(\mb{R}^n)}&\lesssim \|u(\tau,\cdot)-v(\tau,\cdot)\|^{\theta_3}_{\dot{H}^s(\mb{R}^n)}\,\|u(\tau,\cdot)-v(\tau,\cdot)\|^{1-\theta_3}_{L^2(\mb{R}^n)},
	\end{align*}
	where $\theta_1=\frac{n}{s}(\frac{1}{2}-\frac{1}{q_1}+\frac{\ell}{n}) \in [\ell/s,1],\,\,\theta_2=\frac{n}{s}(\frac{1}{2}-\frac{1}{q_2(p-1)}) \in [0,1]$ and $\theta_3=\frac{n}{s}(\frac{1}{2}-\frac{1}{q_3}) \in [0,1]$.
	Additionally, since $\omega$ is a constant parameter, we may apply again the fractional chain rule from Proposition \ref{fractionalchainrule} with $p> 1+ \lceil \ell \rceil$ and the fractional Gagliardo-Nirenberg inequality from Proposition \ref{fractionalgagliardonirenbergineq} to conclude
	\begin{align*}
	&\|G\big(u(\tau,\cdot)+(1-\omega)v(\tau,\cdot)\big)\|_{\dot{H}^\ell_{q_4}(\mb{R}^n)}\\
	&\quad \lesssim \|\omega u(\tau,\cdot)+(1-\omega) v(\tau,\cdot)\|^{p-2}_{L^{q_5}(\mb{R}^n)}\, \|\omega u(\tau,\cdot)+(1-\omega) v(\tau,\cdot)\|_{\dot{H}^\ell_{q_6}(\mb{R}^n)}\\
	&\quad \lesssim \|\omega u(\tau,\cdot)+(1-\omega) v(\tau,\cdot)\|^{(p-2)\theta_4+\theta_5}_{\dot{H}^s(\mb{R}^n)}\,\|\omega u(\tau,\cdot)+(1-\omega) v(\tau,\cdot)\|^{(p-2)(1-\theta_4)+1-\theta_5}_{L^2(\mb{R}^n)},
	\end{align*}
	where $\frac{1}{q_4}=\frac{p-2}{q_5}+\frac{1}{q_6},\,\,\theta_4=\frac{n}{s}(\frac{1}{2}-\frac{1}{q_5}) \in [0,1]$ and $\theta_5=\frac{n}{s}(\frac{1}{2}-\frac{1}{q_6}+\frac{\ell}{n}) \in [\ell/s,1]$. Hence, we derive
	\begin{align*}
	\int_0^1 \big\|G\big(u(\tau,\cdot)+(1-\omega)v(\tau,\cdot)\big)\big\|_{\dot{H}^\ell_{q_4}(\mb{R}^n)}\mathrm{d}\omega &\lesssim \left(\|u(\tau,\cdot)\|_{\dot{H}^s(\mb{R}^n)}+\|v(\tau,\cdot)\|_{\dot{H}^s(\mb{R}^n)}\right)^{(p-2)\theta_4+\theta_5}\\ &\quad\times\left(\|u(\tau,\cdot)\|_{L^2(\mb{R}^n)}+\|v(\tau,\cdot)\|_{L^2(\mb{R}^n)} \right)^{(p-2)(1-\theta_4)+1-\theta_5}.
	\end{align*}
	Now, combining all previous estimates one gets
	$$\|\,|u(\tau,\cdot)|^p-|v(\tau,\cdot)|^p\|_{\dot{H}^\ell(\mb{R}^n)}\lesssim (1+\tau)^{-\frac{n}{2\sigma}(\frac{p}{m}-\frac{1}{2})-\frac{\ell}{2\sigma}}\|u-v\|_{X(\tau)}\left( \|u\|^{p-1}_{X(\tau)}+ \|v\|^{p-1}_{X(\tau)} \right), $$
	where we notice that $\theta_1+ (p-1)\theta_2= \theta_3+ (p-2)\theta_4+ \theta_5= \frac{n}{s}(\frac{p-1}{2}+\frac{\ell}{n})$. Therefore, all the conditions for $\theta_1,\dots, \theta_5$ and $q_1,\dots,q_6$ lead to
	$$ 1< p \leqslant \frac{n-2\ell}{n-2s}\ \  \text{if}\ \  n>2s, \quad \text{or}\quad p> 1 \ \ \text{if}\ \ n \leqslant 2s. $$
	Finally, we do the straight-forward computation similar to Step 2 to finish the proof of desired inequality \eqref{Eq_LIP}. Thus, our proof is complete.
\end{proof}

\begin{remark}
	Here, we will give an example to verify all the possibility of $r_1$, $r_2$ and $q_1,\dots,q_6$ as required in the proof of Theorem \ref{theorem3.1} by choosing $s=1$, $n=1$, $\sigma=2$ and $m=1$. If we take $\ell=1/2$ and for any $p>5$, then the possible choice of these parameters is as follows:
	\begin{itemize}
		\item $r_1=4(p-1)$ and $r_2=4$,
		\item $q_1=q_2=q_3=q_4=4$, $q_5=8(p-2)$ and $q_6=8$.
	\end{itemize}
\end{remark}

\section{Blow-up of solutions}\label{Section_Blowup}
\setcounter{equation}{0}
Before stating our main result in this section, let us recall the following auxiliary lemmas whose proofs can be found in the recent papers \cite{DaoDuong,Dao-Reissig-2020}. At first, by Lemma 2.1 and Lemma 2.3 in the paper \cite{Dao-Reissig-2020} we may arrive at the following lemma.
\begin{lemma} \label{lemma2.1}
Let $m \in \mb{N}$ and $s \in [0,1)$. Then, the following estimates hold for any $q>n$ and for all $x \in \R^n$:
\begin{align*}
 \left|(-\Delta)^{m+s} \langle x\rangle^{-q}\right| \lesssim
\begin{cases}
\langle x\rangle^{-n-2m} & \text{if}\ \  s=0, \\
\langle x\rangle ^{-n-2s} & \text{if}\ \  s \in (0,1).
\end{cases}
\end{align*}
\end{lemma}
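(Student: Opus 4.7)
The plan is to treat the polyharmonic case $s=0$ and the fractional case $s\in(0,1)$ separately, bootstrapping the latter from the former. For $s=0$ I would proceed by an elementary induction. A direct calculation gives $\Delta\langle x\rangle^{-q}=\langle x\rangle^{-q-4}P_1(|x|^2)$ with $P_1$ a degree-one polynomial, and iteration yields $(-\Delta)^m\langle x\rangle^{-q}=\langle x\rangle^{-q-4m}P_m(|x|^2)$ with $\deg P_m\leqslant 2m$. This immediately gives $|(-\Delta)^m\langle x\rangle^{-q}|\lesssim\langle x\rangle^{-q-2m}$, and since $q>n$ we conclude the claimed bound $\lesssim\langle x\rangle^{-n-2m}$.

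For $s\in(0,1)$ I would exploit commutativity to write $(-\Delta)^{m+s}=(-\Delta)^s\circ(-\Delta)^m$ and set $f:=(-\Delta)^m\langle x\rangle^{-q}$. By the previous step, $f$ is smooth with $|D^\alpha f(x)|\lesssim\langle x\rangle^{-q-2m-|\alpha|}$ for every multi-index $\alpha$; in particular $f\in L^1(\mb{R}^n)$ since $q+2m>n$. The main tool is then the symmetric singular integral representation
\[(-\Delta)^s f(x) = -\frac{c_{n,s}}{2}\int_{\mb{R}^n} \frac{f(x+y)+f(x-y)-2f(x)}{|y|^{n+2s}}\,\mathrm{d}y,\]
which is well defined thanks to the smoothness and $L^1$ integrability of $f$. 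For $|x|\leqslant 2$ the target bound is trivial, since all quantities are uniformly bounded and $\langle x\rangle^{-n-2s}\asymp 1$ on this set. For $|x|>2$ I would split the $y$-integral into the three zones $|y|<1$, $1\leqslant|y|<|x|/2$, and $|y|\geqslant|x|/2$; on the first two zones a second-order Taylor expansion gives $|f(x+y)+f(x-y)-2f(x)|\lesssim|y|^2\langle x\rangle^{-q-2m-2}$ (using $|z|\gtrsim|x|$ there), and after integration in $y$ the resulting contribution is of order $\langle x\rangle^{-q-2m-2s}$, which is absorbed into $\langle x\rangle^{-n-2s}$ via $q>n$.

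The main obstacle is the far-field zone $|y|\geqslant|x|/2$, where no Taylor cancellation is available and the pointwise decay of $f$ alone is too weak to reproduce the $\langle x\rangle^{-n-2s}$ rate. Here I would estimate the three terms in the numerator separately: the self-term produces $\lesssim\langle x\rangle^{-q-2m}|x|^{-2s}\lesssim\langle x\rangle^{-n-2s}$ via $q>n$, while the translated terms $f(x\pm y)$ are bounded, after substitution and using $|y|\asymp|x|$ throughout this zone, by $|x|^{-n-2s}\|f\|_{L^1(\mb{R}^n)}$. This step pins down where the $\langle x\rangle^{-n-2s}$ rate originates: it reflects the non-local nature of $(-\Delta)^s$, with the polynomial decay of the kernel $|y|^{-n-2s}$ combined with the $L^1$ mass of $f$ dictating the far-field behavior. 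Assembling the contributions from all zones yields the claimed pointwise estimate.
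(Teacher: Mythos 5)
Your proof is correct in substance, but it takes a genuinely different route from the paper: the paper does not prove this lemma at all, it simply imports it from Lemmas 2.1 and 2.3 of \cite{Dao-Reissig-2020}. Your argument --- an elementary induction for the polyharmonic part combined with the second-difference singular-integral representation of $(-\Delta)^s$ and a three-zone splitting in $y$ --- is a self-contained reconstruction, and it buys the reader independence from the external citation. The decomposition is sound: the Taylor zones $|y|<|x|/2$ produce the stronger rate $\langle x\rangle^{-q-2m-2s}$, and you correctly identify that the genuine obstruction is the far field $|y|\geqslant |x|/2$, where the $L^1$ mass of $f=(-\Delta)^m\langle x\rangle^{-q}$ against the kernel tail is what forces saturation at $\langle x\rangle^{-n-2s}$; this also explains why the exponent in the conclusion is $-n-2s$ rather than $-q-2m-2s$.

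Two small repairs are needed before the argument is airtight. First, the degree bookkeeping in the $s=0$ induction is off as written: if $P_m$ is a polynomial evaluated at $|x|^2$, then $\deg P_m\leqslant 2m$ only yields $|(-\Delta)^m\langle x\rangle^{-q}|\lesssim\langle x\rangle^{-q-4m+4m}=\langle x\rangle^{-q}$, which is not the claimed $\langle x\rangle^{-q-2m}$, so the word ``immediately'' does not apply. The computation in fact gives the sharper statement $\deg P_m\leqslant m$ as a polynomial in $t=|x|^2$ (equivalently $|P_m(|x|^2)|\lesssim\langle x\rangle^{2m}$), from which $\langle x\rangle^{-q-4m+2m}=\langle x\rangle^{-q-2m}$ does follow; the same induction also yields the derivative bounds $|D^\alpha f(x)|\lesssim\langle x\rangle^{-q-2m-|\alpha|}$ that you invoke later. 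Second, in the far-field zone the assertion $|y|\asymp|x|$ is false, since that zone is unbounded in $y$; what you actually need and implicitly use is only the one-sided bound $|y|^{-n-2s}\leqslant(|x|/2)^{-n-2s}$, which is valid there and suffices to bound the translated terms by $|x|^{-n-2s}\|f\|_{L^1(\mb{R}^n)}$. Neither issue affects the validity of the argument once restated.
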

\begin{lemma} \label{lemma2.2} \cite[Lemma~2.2]{DaoDuong}
Let $\gamma \geqslant 1$ be a fractional number. Let $\phi:= \phi(x)= \langle x\rangle^{-q}$ for some $q>0$. For any $R>0$, let $\phi_R$ be a function defined by
$$ \phi_R(x):= \phi(x/R)\ \  \mbox{for all}\ \ x \in \R^n. $$
Then, $(-\Delta)^\gamma (\phi_R)$ satisfies the following scaling properties for all $x \in \R^n$:
\begin{equation*}
(-\Delta)^\gamma (\phi_R)(x)= R^{-2\gamma} \left((-\Delta)^\gamma \phi \right)(x/R).
\end{equation*}
\end{lemma}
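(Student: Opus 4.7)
The plan is to establish this scaling identity by reducing to the Fourier side, where the fractional Laplacian acts as multiplication by $|\xi|^{2\gamma}$. Since $\phi(x) = \langle x\rangle^{-q}$ is smooth, bounded, and polynomially decaying, it is a tempered distribution, so $\hat{\phi}$ is well-defined and $(-\Delta)^{\gamma}\phi := \mathcal{F}^{-1}(|\xi|^{2\gamma}\hat{\phi})$ makes sense either classically or distributionally. I would first compute the Fourier transform of the dilation, obtaining $\widehat{\phi_R}(\xi) = R^{n}\hat{\phi}(R\xi)$ by the standard change of variables $y = Rx$ inside the Fourier integral.

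The core computation is then essentially one line. Using $|\xi|^{2\gamma} = R^{-2\gamma}|R\xi|^{2\gamma}$, I write
\begin{equation*}
\widehat{(-\Delta)^{\gamma}\phi_R}(\xi) \;=\; |\xi|^{2\gamma}\,R^{n}\hat{\phi}(R\xi) \;=\; R^{-2\gamma}\cdot R^{n}\,|R\xi|^{2\gamma}\hat{\phi}(R\xi) \;=\; R^{-2\gamma}\,\mathcal{F}\!\left(\bigl((-\Delta)^{\gamma}\phi\bigr)(\cdot/R)\right)(\xi),
\end{equation*}
and inverting the Fourier transform gives exactly $(-\Delta)^{\gamma}\phi_R(x) = R^{-2\gamma}\bigl((-\Delta)^{\gamma}\phi\bigr)(x/R)$.

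As a cross-check and alternative approach, one can split $\gamma = m + s$ with $m = [\gamma]$ and $s \in [0,1)$, in the spirit of Lemma \ref{lemma2.1}, and argue by composition $(-\Delta)^{\gamma} = (-\Delta)^{m}(-\Delta)^{s}$. For the integer part, iterating the ordinary chain rule on $\phi(x/R)$ trivially yields $(-\Delta)^{m}\phi_R(x) = R^{-2m}((-\Delta)^{m}\phi)(x/R)$. For $s \in (0,1)$, the singular integral representation
\begin{equation*}
(-\Delta)^{s} f(x) \;=\; C_{n,s}\,\mathrm{p.v.}\int_{\R^{n}}\frac{f(x)-f(y)}{|x-y|^{n+2s}}\,\mathrm{d}y
\end{equation*}
applied to $\phi_R$, combined with the substitution $y = Rz$, produces the Jacobian factor $R^{n}$ in the numerator and $R^{-(n+2s)}$ from the denominator, netting the desired $R^{-2s}$. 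Composing these two ingredients reproduces the claim.

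The main obstacle in both routes is technical rather than conceptual: one must justify that $(-\Delta)^{\gamma}\phi$ is well-defined pointwise (or at least as a locally integrable function), since $\phi$ is not Schwartz but only polynomially decaying, and one must justify the change of variables inside the principal-value integral. These are handled by the smoothness and boundedness of $\phi$ together with the polynomial decay of its derivatives, splitting the singular integral into a neighborhood of $x$ (where Taylor expansion controls $\phi(x)-\phi(y)$) and its complement (where the decay of $\phi$ gives absolute convergence), exactly as in the proof of Lemma \ref{lemma2.1}.
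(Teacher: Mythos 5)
The paper does not actually prove this lemma: it is imported verbatim with the citation \cite[Lemma~2.2]{DaoDuong}, so there is no in-text argument to compare yours against. On its own merits, your proof is correct, and both of your routes work. The Fourier computation is the cleanest: $\widehat{\phi_R}(\xi)=R^n\hat\phi(R\xi)$ together with the homogeneity $|\xi|^{2\gamma}=R^{-2\gamma}|R\xi|^{2\gamma}$ gives the identity in one line. The only point that genuinely needs justification there is the one you flag at the end: for non-integer $\gamma$ the symbol $|\xi|^{2\gamma}$ is not smooth at the origin, and $\hat\phi$ is a distribution which, for $0<q<n$, is the Bessel kernel with a singularity of order $|\xi|^{q-n}$ at $\xi=0$; since $2\gamma+q-n>-n$ the product is locally integrable and decays rapidly, so $(-\Delta)^\gamma\phi$ is a well-defined tempered distribution (indeed a continuous function) and the dilation identity, first established in $\mathcal{S}'(\R^n)$, holds pointwise. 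Your second route, splitting $\gamma=m+s$ and combining the elementary chain rule for $(-\Delta)^m$ with the change of variables $y=Rz$ in the principal-value integral for $(-\Delta)^s$ (which contributes $R^{n}$ from the Jacobian against $R^{-(n+2s)}$ from the kernel), sidesteps the multiplier issue entirely and matches the pointwise framework in which Lemma \ref{lemma2.1} is stated and used in the blow-up proof; it is therefore the more robust of the two for the purposes of Section \ref{Section_Blowup}. Either argument is an acceptable substitute for the external citation.
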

\begin{lemma} \label{lemma2.3}\cite[Lemma~2.7]{Dao-Reissig-2020}
Let $s \in \R$. Let $\phi_1=\phi_1(x) \in H^s(\mb{R}^n)$ and $\phi_2=\phi_2(x) \in H^{-s}(\mb{R}^n)$. Then, the following relation holds:
$$ \int_{\R^n}\phi_1(x)\,\phi_2(x)\mathrm{d}x= \int_{\R^n}\hat{\phi}_1(\xi)\,\hat{\phi}_2(\xi)\mathrm{d}\xi. $$
\end{lemma}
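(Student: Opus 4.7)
The plan is to establish the identity first on the Schwartz class, where both integrals are classical, and then extend by density; this is the natural strategy since $H^s$ and $H^{-s}$ are defined through Fourier multipliers and Schwartz functions are dense in both.

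First I would treat the case $\phi_1,\phi_2\in\mathcal{S}(\mathbb{R}^n)$. Here both $\phi_1\phi_2$ and $\hat{\phi}_1\hat{\phi}_2$ are absolutely integrable, and the identity reduces to a standard Parseval--Plancherel computation: write $\phi_2(x)=\mathcal{F}^{-1}(\hat{\phi}_2)(x)$, insert this into the left-hand side, and apply Fubini to swap the $x$ and $\xi$ integrations. The only attention required is to verify that the Fourier transform convention used throughout the paper produces exactly the constant $1$ in front of the right-hand side (as opposed to a factor $(2\pi)^{\pm n}$); this should be checked against the convention implicit in \eqref{Eq_First_Order_xi}.

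Next I would show that both sides extend continuously to the product space $H^s(\mathbb{R}^n)\times H^{-s}(\mathbb{R}^n)$. For the right-hand side, factor
\begin{align*}
\hat{\phi}_1(\xi)\,\hat{\phi}_2(\xi)=\bigl(\langle\xi\rangle^{s}\hat{\phi}_1(\xi)\bigr)\bigl(\langle\xi\rangle^{-s}\hat{\phi}_2(\xi)\bigr),
\end{align*}
and apply the Cauchy--Schwarz inequality to obtain
\begin{align*}
\left|\int_{\mathbb{R}^n}\hat{\phi}_1(\xi)\,\hat{\phi}_2(\xi)\,\mathrm{d}\xi\right|\leqslant\|\phi_1\|_{H^s(\mathbb{R}^n)}\,\|\phi_2\|_{H^{-s}(\mathbb{R}^n)}.
\end{align*}
This shows the Fourier-side bilinear form is bounded on $H^s\times H^{-s}$. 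The spatial-side integral $\int\phi_1\phi_2\,\mathrm{d}x$ must then be interpreted as the associated $H^s$--$H^{-s}$ duality pairing (rather than a literal Lebesgue integral, which in general need not converge absolutely when $s\neq 0$), and this pairing is continuous by definition.

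Finally, I would choose sequences $\phi_1^{(k)}\to\phi_1$ in $H^s(\mathbb{R}^n)$ and $\phi_2^{(k)}\to\phi_2$ in $H^{-s}(\mathbb{R}^n)$ with $\phi_j^{(k)}\in\mathcal{S}(\mathbb{R}^n)$, apply the Schwartz-class identity for each $k$, and pass to the limit using the two continuity estimates to obtain the statement in full generality. The main subtlety is not analytic but interpretative: one has to be explicit that the left-hand side is defined a priori through the duality pairing when the pointwise product $\phi_1\phi_2$ fails to be in $L^1(\mathbb{R}^n)$; once this convention is fixed, the density argument is routine.
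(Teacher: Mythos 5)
The paper does not actually prove this lemma: it is imported verbatim as Lemma~2.7 of \cite{Dao-Reissig-2020}, so there is no in-paper argument to compare against. Your strategy is the standard proof and is essentially sound: verify the identity on $\mathcal{S}(\mathbb{R}^n)$, obtain the continuity of the Fourier-side bilinear form from Cauchy--Schwarz after inserting $\langle\xi\rangle^{s}\langle\xi\rangle^{-s}=1$, interpret the spatial-side integral as the $H^{s}$--$H^{-s}$ duality pairing, and pass to the limit along Schwartz approximations. Your remark that the left-hand side must be read as a duality pairing rather than a Lebesgue integral is exactly the right interpretative point.

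One issue you flag only partially: besides the normalization constant, the literal identity $\int\phi_1\phi_2\,\mathrm{d}x=\int\hat{\phi}_1(\xi)\,\hat{\phi}_2(\xi)\,\mathrm{d}\xi$ is the multiplication formula only up to a reflection. For Schwartz functions one in fact gets $\int\phi_1\phi_2\,\mathrm{d}x=\int\hat{\phi}_1(\xi)\,\hat{\phi}_2(-\xi)\,\mathrm{d}\xi$ (equivalently $\int\hat{\phi}_1\,\overline{\hat{\phi}_2}\,\mathrm{d}\xi$ when $\phi_2$ is real-valued), and the two right-hand sides coincide only when one factor has even Fourier transform. This is harmless for the paper's sole application, where $\phi_2=\varphi_R$ is radial and real so that $\hat{\varphi}_R$ is even and real, and the multiplier $|\xi|^{2\sigma}$ being even is what ultimately yields the self-adjointness relation $\int(-\Delta)^{\sigma}u\cdot\varphi_R\,\mathrm{d}x=\int u\,(-\Delta)^{\sigma}\varphi_R\,\mathrm{d}x$. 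So your argument proves what is actually needed, but to obtain the lemma exactly as stated you should either add an evenness (or realness plus conjugation) hypothesis or record the identity with $\hat{\phi}_2(-\xi)$ on the right-hand side.
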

Our blow-up result is read as follows.

\begin{theorem} \label{Blow-up.Thoerem}
Let $\sigma\geqslant1$. Let us assume that the initial data $u_1\in L^{m}(\mb{R}^n)$ with $m\in[1,2)$ and fulfills the following conditions:
\begin{equation} \label{u_1.Cond1}
\int_{\mb{R}^n}u_1(x)\mathrm{d}x>0 \quad \text{ if }\quad m=1,
\end{equation}
or
\begin{equation} \label{u_1.Cond2}
u_1(x)\gtrsim|x|^{-\frac{n}{m}}(\log(1+|x|))^{-1} \quad \text{ if }\quad m\in(1,2).
\end{equation}
Then, every local (in time) Sobolev solution $u\in\ml{C}([0,\infty),L^2(\mb{R}^n))$ to \eqref{Semi_Generalized_Plate_Memory} blows up if the exponent $p$ satisfies
\begin{align}\label{Critical exp}
1<p<p_{\mathrm{crit}}(n,m,\sigma)=1+\frac{2m\sigma}{n}
\end{align}
for all $n\geqslant1$.
\end{theorem}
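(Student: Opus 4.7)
The plan is proof by contradiction via a modified test-function method. Suppose a global Sobolev solution $u\in\mathcal{C}([0,\infty),L^2(\mb{R}^n))$ to \eqref{Semi_Generalized_Plate_Memory} exists. The first step is to exploit the differential identity \eqref{Eq_derivative_g*u}, $\partial_t(g\ast u)=u-g\ast u$, to recast the equation in the form
\[
u_{tt}+(-\Delta)^{\sigma}u+\partial_t(g\ast u)=|u|^p,
\]
so that the memory contribution appears as a clean time derivative and only one integration by parts in $t$ is required to move it onto the test function.

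Next, choose $\phi_R(x):=\langle x/R\rangle^{-q}$ with $q>n$ sufficiently large, and the temporal cut-off $\eta_T(t):=\eta(t/T)^{2p'}$, where $\eta\in\mathcal{C}^{\infty}(\mb{R})$ is non-increasing with $\eta\equiv1$ on $[0,1/2]$ and $\eta\equiv0$ on $[1,\infty)$. I would test the rewritten equation against $\phi_R(x)\eta_T(t)$ over $[0,T]\times\mb{R}^n$, integrate by parts twice in $t$ on the $u_{tt}$ term and once on $\partial_t(g\ast u)$, move $(-\Delta)^\sigma$ onto $\phi_R$ via Lemma \ref{lemma2.3}, and invoke $u(0,\cdot)=0$ and $(g\ast u)(0,\cdot)=0$ to obtain the working identity
\begin{align*}
\int_{\mb{R}^n}u_1\phi_R\,dx+\int_0^T\!\!\int_{\mb{R}^n}|u|^p\phi_R\eta_T\,dx\,dt
&=\int_0^T\!\!\int u\,\phi_R\,\eta_T''\,dx\,dt+\int_0^T\!\!\int u\,(-\Delta)^\sigma\phi_R\,\eta_T\,dx\,dt\\
&\quad-\int_0^T\!\!\int (g\ast u)\,\phi_R\,\eta_T'\,dx\,dt.
\end{align*}

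I would then lower-bound the LHS from the data assumptions: for $m=1$, monotone convergence and \eqref{u_1.Cond1} give $\int u_1\phi_R\geqslant c_0>0$ for $R$ large; for $m\in(1,2)$, \eqref{u_1.Cond2} yields $\int u_1\phi_R\gtrsim R^{n/m'}(\log R)^{-1}$. For the RHS, Hölder's inequality with weight $\phi_R\eta_T$ produces in each term a factor $I_R^{1/p}$ (with $I_R:=\int_0^T\!\!\int|u|^p\phi_R\eta_T$) times an explicit test-function factor. Combining Lemma \ref{lemma2.1} with the scaling identity of Lemma \ref{lemma2.2} yields $|(-\Delta)^\sigma\phi_R(x)|\lesssim R^{-2\sigma}\psi(x/R)$ for an integrable $\psi$, whence $\int|(-\Delta)^\sigma\phi_R|^{p'}\phi_R^{1-p'}\,dx\lesssim R^{n-2\sigma p'}$; the choice $k=2p'$ guarantees that $|\eta_T^{(j)}|^{p'}\eta_T^{1-p'}\lesssim T^{-jp'}\eta(t/T)^{p'}$ stays integrable in $t$, contributing $T^{1-jp'}$ for $j=1,2$. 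The crucial new device for the memory term is the Jensen-type bound
\[
|(g\ast u)(t,x)|^p\leqslant\int_0^t e^{-(t-\tau)}|u(\tau,x)|^p\,d\tau,
\]
which together with Fubini and the monotonicity of $\eta_T$ gives $\int_0^T|g\ast u|^p\eta_T\,dt\leqslant\int_0^T|u|^p\eta_T\,d\tau$; consequently the memory term is also dominated by $I_R^{1/p}$ times a test-function factor of the form $T^{1/p'-1}R^{n/p'}$.

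An application of Young's inequality absorbs $I_R^{1/p}$ into $I_R$ on the LHS, leaving $\int u_1\phi_R\lesssim T\cdot R^{n-2\sigma p'}$ (the Laplacian and memory contributions are dominant after raising to $p'$). With the natural scaling $R=T^{1/(2\sigma)}$, this simplifies to $\int u_1\phi_R\lesssim T^{1-p'+n/(2\sigma)}$. Comparing with the lower bounds on the LHS, the subcriticality hypothesis $p<p_{\mathrm{crit}}(n,m,\sigma)=1+2m\sigma/n$ is precisely equivalent to $p'>1+n/(2\sigma m)$, which makes the RHS exponent strictly smaller than that of the LHS; letting $T\to\infty$ yields a contradiction. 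The main difficulty I anticipate is the memory term: coupling the Jensen bound with the Hölder estimate so that $|g\ast u|^p$ is controlled by the \emph{same} functional $I_R$ appearing on the LHS, rather than by an auxiliary memory functional, is the delicate step and is presumably what the introduction refers to by ``two different functionals related to the solution itself''. A secondary technical point is selecting $q$ within the admissible range dictated by Lemma \ref{lemma2.1} so that $\int|(-\Delta)^\sigma\phi_R|^{p'}\phi_R^{1-p'}\,dx$ is finite for fractional $\sigma$.
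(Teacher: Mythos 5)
Your proposal is correct in its overall architecture and arrives at the same rates and the same critical exponent, but it handles the memory term by a genuinely different device than the paper. The paper never integrates the memory term by parts onto the test function: instead it applies the operator $\partial_t+\ml{I}$ to the equation (equivalently, passes to the third-order-in-time equation without memory, cf.\ \eqref{Fund_E_3}), so that the contributions $-(g\ast u)$ and $+(g\ast u)$ cancel identically. This is exactly what the introduction means by ``two different functionals'': the paper works with $I_R=\iint|u|^p\psi_R$ \emph{and} $\widetilde{I}_R=\iint|u|^p\partial_t\psi_R$, the latter arising from testing $\partial_t(|u|^p)$ against $\psi_R$, and the argument then needs the sign information $-\widetilde{I}_R\geqslant0$ (from $\eta$ non-increasing) to discard $\widetilde{I}_R$ after Young's inequality. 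Your route --- rewriting $u-g\ast u=\partial_t(g\ast u)$ via \eqref{Eq_derivative_g*u}, integrating by parts once, and dominating the resulting term through the Jensen-type bound $|(g\ast u)|^p\leqslant g\ast|u|^p$ (valid because $\|g\|_{L^1(0,t)}\leqslant1$) together with Fubini and the monotonicity of $\eta_T$ --- controls the memory contribution by the \emph{same} functional $I_R$, so no auxiliary functional and no sign argument are needed; it also avoids third-order time derivatives and the condition on $\eta'''$ in \eqref{Assumption_eta}. What the paper's cancellation buys is that the memory term disappears exactly rather than being estimated, at the cost of one extra order of $\partial_t$ on the test function. Your exponent bookkeeping ($T=R^{2\sigma}$, dominant contribution $T^{1-p'+n/(2\sigma)}$, comparison with $R^{n(1-1/m)}(\log R)^{-1}$) matches the paper's \eqref{blow-up.1}--\eqref{blow-up.4}.

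One small correction: $q$ should \emph{not} be taken ``sufficiently large''. For $\int_{\mb{R}^n}\bigl|(-\Delta)^{\sigma}\varphi(\tilde x)\bigr|^{p'}(\varphi(\tilde x))^{1-p'}\mathrm{d}\tilde x$ to converge one needs $(n+2s_{\sigma})p'-q(p'-1)>n$, since Lemma \ref{lemma2.1} only yields the decay $\langle x\rangle^{-n-2s_{\sigma}}$ for $(-\Delta)^{\sigma}\varphi$ no matter how large $q$ is; the paper's choice $q=n+2s_{\sigma}$ (with $s_{\sigma}\in(0,1)$, and $s_{\sigma}\leqslant\sigma-[\sigma]$ in the fractional case) makes this condition read $n+2s_{\sigma}>n$ and is essentially forced. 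You flag this as a technical point to check, which is the right instinct, but the phrase ``sufficiently large'' points in the wrong direction.
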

\begin{remark}
Considering the critical case $p=p_{\mathrm{crit}}(n,1,\sigma)$ if $u_1\in L^1(\mb{R}^n)$, one may prove blow-up of weak solutions to the Cauchy problem \eqref{Semi_Generalized_Plate_Memory} with $\sigma\in\mb{N}$ by combining the idea in the next proof and the approach of Theorem 1 in \cite{D'Ab-Ebert-2017}. However, for any fractional number $\sigma\geqslant1$ it is still open to prove a blow-up result in the critical case $p=p_{\mathrm{crit}}(n,1,\sigma)$ and to claim that whether there exists a global (in time) solution or not for $p=p_{\mathrm{crit}}(n,m,\sigma)$ with $m\in (1,2)$, as well.
\end{remark}
\begin{remark}
According to \eqref{Critical_exponent} in Theorem \ref{theorem3.1} and \eqref{Critical exp} in Theorem \ref{Blow-up.Thoerem}, we may claim that the critical exponent for the Cauchy problem \eqref{Semi_Generalized_General_Plate_Memory} with $u_1\in L^m(\mb{R}^n)$ carrying $m\in[1,2)$ and some $\sigma,n$ is given by $p_{\mathrm{crit}}(n,m,\sigma)=1+2m\sigma/n$ describing the threshold condition between global (in time) existence of small data weak solutions and blow-up of weak solutions even for small data.
\end{remark}
\begin{proof}
First of all, motivated by \cite{Dao-Reissig-2020}, we introduce a radial space-dependent test function $\varphi=\varphi(x)$ such that
\begin{align*}
\varphi(x):=\langle x\rangle^{-n-2s_{\sigma}}=(1+|x|^2)^{-n/2-s_{\sigma}},
\end{align*}
where $s_{\sigma}$ is chosen as an arbitrary constant belonging to $(0,1)$ if $\sigma$ is an integer number, and a small constant satisfying $0<s_{\sigma} \leqslant \sigma-[\sigma]$ if $\sigma$ is a fractional number. Then, Lemma \ref{lemma2.1} leads to the following estimate for any $\sigma\geqslant 1$:
\begin{equation}
\left|(-\Delta)^{\sigma} \langle x\rangle^{-n-2s_{\sigma}}\right| \lesssim \langle x\rangle^{-n-2s_{\sigma}}. \label{*}
\end{equation}
Moreover, we choose the time-dependent test function $\eta=\eta(t)$ such that $\eta\in\ml{C}_0^{\infty}([0,\infty))$ and
\begin{align}\label{Assum eta}
\eta(t):=\begin{cases}
1&\mbox{for} \ \ 0\leqslant t\leqslant\frac{1}{2},\\
\mbox{decreasing}&\mbox{for}\ \  \frac{1}{2}\leqslant t\leqslant 1,\\
0&\mbox{for} \ \ t\geqslant1,
\end{cases}
\end{align}
satisfying
\begin{align}\label{Assumption_eta}
(\eta(t))^{-\frac{p'}{p}}\left(|\eta'(t)|^{p'}+|\eta''(t)|^{p'}+|\eta'''(t)|^{p'}\right)\leqslant C\ \ \mbox{for any}\ \ t\in\left[\tfrac{1}{2},1\right],
\end{align}
where $C$ is a positive constant and $p'$ is the conjugate of $p$. Let $R$ be a large parameter in $[0,\infty)$. Then, we may introduce the test function
\begin{align*}
\psi_R(t,x):=\eta_R(t)\varphi_R(x):=\eta(t/R^{2\sigma})\varphi(x/R).
\end{align*}
To begin with the proof, we should define the functionals as follows:
\begin{align*}
I_R:=\int_0^{\infty}\int_{\mb{R}^n}|u(t,x)|^p\psi_R(t,x)\mathrm{d}x\mathrm{d}t\ \ \mbox{and}\ \ \widetilde{I}_R:=\int_0^{\infty}\int_{\mb{R}^n}|u(t,x)|^p\partial_t\psi_R(t,x)\mathrm{d}x\mathrm{d}t.
\end{align*}
Let us assume $u=u(t,x)$ being a global (in time) Sobolev solution taken from $\ml{C}\left([0,\infty),L^2(\mb{R}^n)\right)$ to \eqref{Semi_Generalized_Plate_Memory}. By performing once integration by parts, we obtain
\begin{align*}
I_R-\widetilde{I}_R&=\int_0^{\infty}\int_{\mb{R}^n}\left(u_{tt}(t,x)+(-\Delta)^{\sigma}u(t,x)+u(t,x)-(g\ast u)(t,x)\right)\psi_R(t,x)\mathrm{d}x\mathrm{d}t\\
&\quad+\int_0^{\infty}\int_{\mb{R}^n}\left(u_{ttt}(t,x)+(-\Delta)^{\sigma}u_t(t,x)+u_t(t,x)-u(t,x)+(g\ast u)(t,x)\right)\psi_R(t,x)\mathrm{d}x\mathrm{d}t\\
&\quad-\int_{\mb{R}^n}\left(u_{tt}(t,x)+(-\Delta)^{\sigma}u(t,x)+u(t,x)-(g\ast u)(t,x)\right)\psi_R(t,x)\big|_{t=0}^{t=\infty}\mathrm{d}x\\
&=\int_0^{\infty}\int_{\mb{R}^n}\left(u_{ttt}(t,x)+u_{tt}(t,x)+(-\Delta)^{\sigma}u_t(t,x)+(-\Delta)^{\sigma}u(t,x)+u_t(t,x)\right)\psi_R(t,x)\mathrm{d}x\mathrm{d}t,
\end{align*}
where we used the support condition for $\eta_R(t)$ and $u_{tt}(0,x)=0$ since $u(0,x)=0$.\\
Let us now apply several times integration by parts in the above identity to show
\begin{align*}
I_R-\widetilde{I}_R&=-\int_{\mb{R}^n}u_1(x)\varphi_R(x)\mathrm{d}x-\int_0^{\infty}\int_{\mb{R}^n}u(t,x)\left(\partial_t^3\psi_R(t,x)-\partial_t^2\psi_R(t,x)+\partial_t\psi_R(t,x)\right)\mathrm{d}x\mathrm{d}t\\
&\quad-\int_0^{\infty}\int_{\mb{R}^n}(-\Delta)^{\sigma}u(t,x)\left(\partial_t\psi_R(t,x)-\psi_R(t,x)\right)\mathrm{d}x\mathrm{d}t\\
&=:-\int_{\mb{R}^n}u_1(x)\varphi_R(x)\mathrm{d}x+J_{1,R}+J_{2,R}.
\end{align*}
To deal with the estimation of $|J_{1,R}|$, we employ H\"older's inequality 
\begin{align*}
|J_{1,R}|&\leqslant\int_{R^{2\sigma}/2}^{R^{2\sigma}}\int_{\mb{R}^n}|u(t,x)|\varphi_R(x)\left(|\eta_R'''(t)|+|\eta_R''(t)|+|\eta_R'(t)|\right)\mathrm{d}x\mathrm{d}t\\
&\leqslant I_R^{\frac{1}{p}}\left(\int_{R^{2\sigma}/2}^{R^{2\sigma}}\int_{\mb{R}^n}\varphi_R(x)(\eta_R(t))^{-\frac{p'}{p}}\left(|\eta_R'''(t)|^{p'}+|\eta_R''(t)|^{p'}+|\eta'_R(t)|^{p'}\right)\mathrm{d}x\mathrm{d}t\right)^{\frac{1}{p'}}\\
&\lesssim I_R^{\frac{1}{p}}\left(R^{-2\sigma p'+2\sigma+n}\int_{\mb{R}^n}\langle \tilde{x}\rangle^{-n-2s_{\sigma}}\mathrm{d}\tilde{x}\right)^{\frac{1}{p'}}\lesssim I_R^{\frac{1}{p}} R^{-2\sigma+\frac{2\sigma+n}{p'}},
\end{align*}
where we used the change of variables $\tilde{t}:=t/R^{2\sigma}$, $\tilde{x}:=x/R$ and our assumption \eqref{Assumption_eta}. Here, we should mention that the integral of $\langle \tilde{x}\rangle^{-n-2s_{\sigma}}$ over $\mb{R}^n$ is bounded due to $s_{\sigma} \in (0,1)$.

On the other hand, to estimate $|J_{2,R}|$, we notice that $\varphi_R\in H^{2\sigma}(\mb{R}^n)$ and $u\in\ml{C}\left([0,\infty),L^2(\mb{R}^n)\right)$. Then, the application of Lemma \ref{lemma2.3} implies
$$ \int_{\mb{R}^n}(-\Delta)^{\sigma}u(t,x) \varphi_R(x) \mathrm{d}x=\int_{\mb{R}^n}u(t,x) (-\Delta)^{\sigma}\varphi_R(x) \mathrm{d}x. $$
Thus, it follows immediately that
\begin{align*}
\int_{\mb{R}^n}(-\Delta)^{\sigma}u(t,x)\left(\partial_t\psi_R(t,x)-\psi_R(t,x)\right)\mathrm{d}x=\int_{\mb{R}^n}u(t,x)\left((-\Delta)^{\sigma}\partial_t\psi_R(t,x)-(-\Delta)^{\sigma}\psi_R(t,x)\right)\mathrm{d}x.
\end{align*}
For this reason, we arrive at
\begin{align*}
J_{2,R}&=\int_0^{\infty}\int_{\mb{R}^n}u(t,x)\left((-\Delta)^{\sigma}\psi_R(t,x)- (-\Delta)^{\sigma}\partial_t\psi_R(t,x)\right)\mathrm{d}x\mathrm{d}t\\
&=\int_0^{\infty}\int_{\mb{R}^n}u(t,x)(-\Delta)^{\sigma}\varphi_R(x)\left(\eta_R(t)-\eta'_R(t)\right)\mathrm{d}x\mathrm{d}t.
\end{align*}
The application of H\"older's inequality again leads to
\begin{align*}
|J_{2,R}|&\leqslant I_R^{\frac{1}{p}}\left(\int_0^{R^{2\sigma}}\int_{\mb{R}^n}\eta_R(t)(\varphi_R(x))^{-\frac{p'}{p}}|(-\Delta)^{\sigma}\varphi_R(x)|^{p'}\mathrm{d}x\mathrm{d}t\right.\\
&\qquad\quad\left.+\int_{R^{2\sigma}/2}^{R^{2\sigma}}\int_{\mb{R}^n}(\eta_R(t))^{-\frac{p'}{p}}|\eta'_R(t)|^{p'}(\varphi_R(x))^{-\frac{p'}{p}}|(-\Delta)^{\sigma}\varphi_R(x)|^{p'}\mathrm{d}x\mathrm{d}t\right)^{\frac{1}{p'}}\\
&\lesssim I_R^{\frac{1}{p}}\left(R^{-2\sigma p'+2\sigma+n}\int_{\mb{R}^n}(\varphi(\tilde{x}))^{-\frac{p'}{p}}|(-\Delta)^{\sigma}\varphi(\tilde{x})|^{p'}\mathrm{d}\tilde{x}\right)^{\frac{1}{p'}} \\
&\lesssim I_R^{\frac{1}{p}} \left(R^{-2\sigma p'+2\sigma+n}\int_{\mb{R}^n}\langle \tilde{x}\rangle^{-n-2s_{\sigma}}\mathrm{d}\tilde{x}\right)^{\frac{1}{p'}}\lesssim I_R^{\frac{1}{p}}R^{-2\sigma+\frac{2\sigma+n}{p'}},
\end{align*}
where we used the estimate \eqref{*} in the last line of the above chain inequality. Here, we also used the assumption \eqref{Assumption_eta} to estimate the second integral and the change of variables $\tilde{t}:=t/R^{2\sigma}$, $\tilde{x}:=x/R$. In addition, to get the second estimate in the previous chain estimation of $|J_{2,R}|$, the application of Lemma \ref{lemma2.2} gives the relation
\begin{align*}
(-\Delta)^{\sigma}\varphi_R(x)=R^{-2\sigma}(-\Delta)^{\sigma}\varphi(\tilde{x})
\end{align*}
for any $\sigma\geqslant 1$.

Collecting all derived estimates and applying Young's inequality, we may conclude
\begin{align*}
I_R-\widetilde{I}_R+\int_{\mb{R}^n}u_1(x)\varphi_R(x)\mathrm{d}x\leqslant C_0 I_R^{\frac{1}{p}}R^{-2\sigma+\frac{2\sigma+n}{p'}}\leqslant \frac{1}{p}I_R+\frac{C_0^{p'}}{p'}R^{-2\sigma p'+2\sigma+n}
\end{align*}
for some suitable constant $C_0>0$, in other words,
\begin{align}
\frac{1}{p'}I_R-\widetilde{I}_R+\int_{\mb{R}^n}u_1(x)\varphi_R(x)\mathrm{d}x\lesssim R^{-2\sigma p'+2\sigma+n}. \label{blow-up.1}
\end{align}
Due to the setting that the test function $\eta(t)$ is a non-increasing function, one has $-\eta'_R(t)\geqslant0$. In other words, it holds that $-\widetilde{I}_{R}\geqslant0$.

Next, we divide our discussion into two cases as follows: $m=1$ and $m\in(1,2)$.

\begin{description}
	\item \textbf{Case 1:} If $m=1$, the we have the condition $1<p<p_{\mathrm{crit}}(n,1,\sigma)$, which is equivalent to
	\begin{equation} \label{blow-up.2}
	-2\sigma p'+2\sigma+n<0.
	\end{equation}
	Since the assumption \eqref{u_1.Cond1} holds, there exists $R_0>0$ such that
	\begin{equation} \label{blow-up.3}
	\int_{\mb{R}^n}u_1(x)\varphi_R(x)\mathrm{d}x >0
	\end{equation}
	for all $R>R_0$. Hence, we may immediately obtain from \eqref{blow-up.1} that
	$$ \int_{\mb{R}^n}u_1(x)\varphi_R(x)\mathrm{d}x \lesssim R^{-2\sigma p'+2\sigma+n} \to 0 \ \  \text{as}\ \ R\to \infty $$
	due to \eqref{blow-up.2}. It contracts to \eqref{blow-up.3}. Therefore, every global (in time) Sobolev solution blows up.
	\item \textbf{Case 2:} If $m\in(1,2)$, then we have the condition $1<p<p_{\mathrm{crit}}(n,m,\sigma)$, which is equivalent to
	\begin{equation} \label{blow-up.4}
	n\left(1-\tfrac{1}{m}\right)>-2\sigma p'+2\sigma+n.
	\end{equation}
	The assumption \eqref{u_1.Cond2} shows that
	\begin{align*}
	\int_{\mb{R}^n}u_1(x)\varphi_R(x)\mathrm{d}x&\geqslant\int_{|x|\leqslant R}u_1(x)\langle x/R\rangle^{-(n+2s_\sigma)}\mathrm{d}x\\
	&\gtrsim\int_{|x|\leqslant R}u_1(x)\mathrm{d}x\geqslant\int_{|x|\leqslant R}|x|^{-\frac{n}{m}}(\log(|x|))^{-1}\mathrm{d}x\\
	&\gtrsim(\log(R))^{-1}R^{n(1-\frac{1}{m})}
	\end{align*}
	for sufficiently large $R$, where we used a change of variable in the last estimate. In other words, we may claim from \eqref{blow-up.1} that
	\begin{align*}
	(\log(R))^{-1}R^{n(1-\frac{1}{m})}\lesssim\int_{\mb{R}^n}u_1(x)\varphi_R(x)\mathrm{d}x\lesssim R^{-2\sigma p'+2\sigma+n},
	\end{align*}
	that is,
	\begin{align*}
	(\log(R))^{-1}R^{n(1-\frac{1}{m})-(-2\sigma p'+2\sigma+n)}\lesssim 1
	\end{align*}
	for sufficiently large $R$. Letting $R\to\infty$, one may directly obtain a contradiction due to \eqref{blow-up.4}. All in all, every global (in time) Sobolev solution blows up.
\end{description}

Summarizing, the proof of Theorem \ref{Blow-up.Thoerem} is complete.
\end{proof}

\section{Final remarks}\label{Section_Finalremark}
\setcounter{equation}{0}
In Section \ref{Section_GESDS}, we have proved the global (in time) existence of small data Sobolev solutions with lower regularity to the Cauchy problem \eqref{Semi_Generalized_Plate_Memory}. One may also prove the global (in time) existence of small data energy solutions or higher-order energy solution such that
\begin{align*}
u\in\ml{C}\left([0,\infty),H^{\sigma+s}(\mb{R}^n)\right)\cap \ml{C}^1\left([0,\infty),H^{s+\ell}(\mb{R}^n)\right)
\end{align*}
with some suitable choice of $\ell$ which represents the regularity-loss-type, where $s\geqslant0$. The main approach is based on Theorem \ref{Thm_Est_Solution_Derivative} and some tools in Harmonic Analysis. It would be interesting to analyze the suitable relation between $s,\sigma$ and $\ell$ from regularity of initial data due to the regularity-loss-type decay property.

Throughout this paper, we have investigated global (in time) existence of small data Sobolev solutions and blow-up result of the obtained global solutions even for small data to the semilinear $\sigma$-evolution equations with exponential decay memory term. Moreover, we have determined the critical exponent $p=p_{\mathrm{crit}}(n,m,\sigma)$ for \eqref{Semi_Generalized_Plate_Memory} with some parameters $n,\sigma$ and $m$. However, it is still open to find the critical exponent for the semilinear $\sigma$-evolution models with general exponential decay memory kernel, namely,
\begin{align}\label{Semi_Generalized_General_Plate_Memory}
\begin{cases}
u_{tt}+(-\Delta)^{\sigma}u+u-g\ast u=|u|^p,&x\in\mb{R}^n,\ t>0,\\
u(0,x)=u_0(x),\ u_t(0,x)=u_1(x),&x\in\mb{R}^n,
\end{cases}
\end{align}
where $\sigma\geqslant 1$, and the memory kernel $g:[0,\infty)\to[0,\infty)$ denotes a time-dependent function having small perturbation of exponential decay such that
\begin{align}\label{General_Memory}
0<g(0)\,\mathrm{e}^{-C_1t}\leqslant g(t)\leqslant g(0)\,\mathrm{e}^{-C_2t}
\end{align}
with positive constants $C_1$ and $C_2$ for any $t\geqslant0$. Actually, the corresponding linear Cauchy problem for \eqref{Semi_Generalized_General_Plate_Memory} with $\sigma=2$ and \eqref{General_Memory} has been studied in \cite{Liu-Ueda-2020} recently. Therefore, it is reasonable to consider the memory kernel function satisfying \eqref{General_Memory}. The main difficulties to treat \eqref{Semi_Generalized_General_Plate_Memory} are not only non-local operator $(-\Delta)^{\sigma}$ when $\sigma$ is a fractional number but also to understand the treatment of flexible function $g(t)$. Due to the fact that $g(t)$ still has the exponential decay property, we may conjecture the critical exponent for \eqref{Semi_Generalized_General_Plate_Memory} is still given by
\begin{align*}
p=p_{\mathrm{crit}}(n,m,\sigma)= 1+\frac{2m\sigma}{n},
\end{align*} 
where initial data is taken from $L^m$ spaces with $m\in[1,2)$.

\appendix
\section{Tools from Harmonic Analysis}\label{Harmonic_Anal}
\setcounter{equation}{0}
\begin{prop}[Fractional Gagliardo-Nirenberg Inequality]\label{fractionalgagliardonirenbergineq}
	Let $p,\,p_0,\,p_1\in(1,\infty)$ and $\kappa\in[0,s)$ with $s>0$. Then, it holds for all $f\in L^{p_0}(\mb{R}^n)\cap \dot{H}^{s}_{p_1}(\mb{R}^n)$
	\begin{equation*}
	\|f\|_{\dot{H}^{\kappa}_{p}(\mb{R}^n)}\lesssim\|f\|_{L^{p_0}(\mb{R}^n)}^{1-\beta}\,\|f\|^{\beta}_{\dot{H}^{s}_{p_1}(\mb{R}^n)},
	\end{equation*}
	where $\beta=\beta_{\kappa,s}(p,p_0,p_1,n)=\big(\frac{1}{p_0}-\frac{1}{p}+\frac{\kappa}{n}\big)\big\backslash\big(\frac{1}{p_0}-\frac{1}{p_1}+\frac{s}{n}\big)$ and $\beta\in[\kappa/s,1]$.
\end{prop}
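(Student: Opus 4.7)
The plan is to establish this fractional Gagliardo--Nirenberg inequality via a Littlewood--Paley decomposition combined with Bernstein-type frequency estimates and an optimisation in the dyadic cut-off. First, I would fix a smooth dyadic partition $\{\Delta_j\}_{j\in\mb{Z}}$ of the frequency variable and invoke the classical Littlewood--Paley characterisation
$$\|f\|_{\dot{H}^\alpha_q(\mb{R}^n)}\approx \Bigl\|\Bigl(\sum_{j\in\mb{Z}} 2^{2j\alpha}|\Delta_j f|^2\Bigr)^{1/2}\Bigr\|_{L^q(\mb{R}^n)},$$
valid for $q\in(1,\infty)$ and $\alpha\in\mb{R}$, which identifies $\dot{H}^\alpha_q$ with the homogeneous Triebel--Lizorkin space $\dot{F}^{\alpha}_{q,2}$ and turns the problem into an estimate on the $\ell^2$-valued square function.

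Second, the exponent $\beta$ is pinned down by scaling: applying the target inequality to the rescaled function $f_\lambda(x):=f(\lambda x)$ and matching the $\lambda$-powers on the two sides forces the stated formula, while the admissibility interval $\beta\in[\kappa/s,1]$ corresponds to the parameter point $(\kappa,1/p)$ lying on the segment joining $(0,1/p_0)$ and $(s,1/p_1)$ in the Sobolev embedding diagram. This is the geometric content of the inequality.

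Third, to obtain the quantitative bound I would split $f=\sum_{j<J}\Delta_j f+\sum_{j\geq J}\Delta_j f$ at a threshold $J$ to be optimised. Using Bernstein's inequality on each dyadic block (or, when $p_0>p$ or $p_1>p$, the pointwise bound $|\Delta_j f(x)|\lesssim Mf(x)$ coupled with the Fefferman--Stein vector-valued maximal inequality), I would derive the two competing block estimates
$$\|\,|D|^\kappa \Delta_j f\|_{L^p}\lesssim 2^{j(\kappa+n/p_0-n/p)}\|f\|_{L^{p_0}},\qquad \|\,|D|^\kappa\Delta_j f\|_{L^p}\lesssim 2^{j(\kappa-s+n/p_1-n/p)}\|\,|D|^s f\|_{L^{p_1}}.$$
The condition $\beta\in[\kappa/s,1]$ ensures that the first exponent is nonnegative and the second nonpositive, so after reassembling the square function and summing geometric series one arrives at
$$\|\,|D|^\kappa f\|_{L^p}\lesssim 2^{Ja}\|f\|_{L^{p_0}}+2^{-Jb}\|\,|D|^s f\|_{L^{p_1}}$$
for suitable positive constants $a,b$ fixed by the scaling balance; optimising over $J\in\mb{Z}$ produces precisely the weight $\|f\|_{L^{p_0}}^{1-\beta}\|f\|_{\dot{H}^s_{p_1}}^\beta$.

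The main obstacle is the transition from block-level estimates to the square-function level in the cases where Bernstein's inequality runs "the wrong way" in the $L^p$-scale, which is unavoidable once $p_0$ or $p_1$ exceeds $p$. This is where the Peetre pointwise bound together with the Fefferman--Stein maximal inequality on $\ell^2$-valued sequences is needed, and it is precisely this step that forces the restriction $p,p_0,p_1\in(1,\infty)$. A routine density argument on Schwartz functions then extends the estimate from smooth data to all $f\in L^{p_0}(\mb{R}^n)\cap \dot{H}^s_{p_1}(\mb{R}^n)$, completing the proof.
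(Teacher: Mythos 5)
The paper does not actually prove this proposition: it is imported verbatim from the literature, with the one-line remark ``The proof of this result may be found in \cite{Hajaiej2011}.'' So there is no in-paper argument to compare against; your sketch should instead be measured against the cited reference, whose strategy (Littlewood--Paley decomposition, reduction to homogeneous Besov/Triebel--Lizorkin norms, scaling to pin down $\beta$) is indeed the one you outline. At that level your plan is the standard and correct one, and in the regime actually used in Section \ref{Section_GESDS} of the paper --- where $p_0=p_1=2$ and $p\geqslant 2$, so that Bernstein's inequality is available in the needed direction and $\beta$ is strictly interior --- your argument goes through as written: triangle inequality over blocks, the two competing Bernstein bounds, geometric summation, and optimization in $J$.

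There is, however, a genuine gap for the full generality of the statement. Your displayed block estimate $\|\,|D|^{\kappa}\Delta_j f\|_{L^p}\lesssim 2^{j(\kappa+n/p_0-n/p)}\|f\|_{L^{p_0}}$ is false when $p<p_0$: Bernstein's inequality only raises the Lebesgue exponent of a frequency-localized function (a modulated bump spread over a ball of radius $L$ has fixed annular frequency support while $\|g\|_{L^p}/\|g\|_{L^{p_0}}\to\infty$ as $L\to\infty$ for $p<p_0$), and the Peetre/Fefferman--Stein device you invoke does not repair this particular inequality --- what it gives is $\|\sup_j|\Delta_jf|\|_{L^{p_0}}\lesssim\|f\|_{L^{p_0}}$, which feeds into a structurally different argument in which the dyadic sum is split at an $x$-dependent threshold $J(x)$ inside the square function, optimized pointwise, and only then integrated (the Oru/Brezis--Mironescu-type interpolation $\dot F^{\kappa}_{p,2}\subset (\dot F^{0}_{p_0,\infty},\dot F^{s}_{p_1,\infty})$), or alternatively a reduction to the Sobolev-embedding endpoint followed by Lebesgue interpolation. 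Two smaller points: the endpoint $\beta=1$, which the statement permits, makes your exponent $b$ vanish, so the high-frequency geometric series diverges and that case must be handled directly as the embedding $\dot H^{s}_{p_1}(\mb{R}^n)\hookrightarrow\dot H^{\kappa}_{p}(\mb{R}^n)$; and the condition $\beta\geqslant\kappa/s$ is not equivalent to nonnegativity of your first exponent (that is merely $\beta\geqslant 0$, which is weaker when $\kappa>0$), so its role in the proof is misattributed --- it is one of the independent necessary conditions isolated in \cite{Hajaiej2011} rather than a by-product of the summation.
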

The proof of this result may be found in \cite{Hajaiej2011}.
\begin{prop}[Fractional Leibniz Rule]\label{fractionleibnizrule} 
	Let $s>0$, $1\leqslant  r\leqslant \infty$ and $1<p_1,\,p_2,\,q_1,\,q_2\leqslant \infty$ satisfying
	\begin{equation*}
	\frac{1}{r}=\frac{1}{p_1}+\frac{1}{p_2}=\frac{1}{q_1}+\frac{1}{q_2}.
	\end{equation*}
	Then, it holds for $f\in\dot{H}^{s}_{p_1}(\mb{R}^n)\cap L^{q_1}(\mb{R}^n)$ and $g\in\dot{H}^{s}_{q_2}(\mb{R}^n)\cap L^{q_2}(\mb{R}^n)$
	\begin{equation*}
	\|fg\|_{\dot{H}^{s}_{r}(\mb{R}^n)}\lesssim \|f\|_{\dot{H}^{s}_{p_1}(\mb{R}^n)}\,\|g\|_{L^{p_2}(\mb{R}^n)}+\|f\|_{L^{q_1}(\mb{R}^n)}\,\|g\|_{\dot{H}^{s}_{q_2}(\mb{R}^n)}.
	\end{equation*}
\end{prop}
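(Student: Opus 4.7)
The plan is to establish the estimate via a Littlewood-Paley paraproduct decomposition of the product $fg$, which is the standard Kato-Ponce strategy. I fix a Littlewood-Paley partition $\sum_{j\in\mb{Z}}\varphi_{j}(\xi)\equiv 1$ on $\mb{R}^{n}\setminus\{0\}$ with $\varphi_{j}$ supported in $\{|\xi|\sim 2^{j}\}$, set $\Delta_{j}:=\varphi_{j}(D)$ and $S_{j}:=\sum_{k\leqslant j-3}\Delta_{k}$. Bony's decomposition then gives
\begin{equation*}
fg\;=\;\sum_{j}S_{j-3}f\cdot\Delta_{j}g\;+\;\sum_{j}\Delta_{j}f\cdot S_{j-3}g\;+\;\sum_{|j-k|\leqslant 2}\Delta_{j}f\cdot\Delta_{k}g\;=:\;\Pi_{1}(f,g)+\Pi_{2}(f,g)+\Pi_{3}(f,g).
\end{equation*}
The two paraproducts $\Pi_{1},\Pi_{2}$ will be bounded, respectively, by the second and the first terms on the right-hand side of the claimed inequality; the resonant ``diagonal'' remainder $\Pi_{3}$ must be redistributed and will contribute to both.

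For $\Pi_{1}(f,g)$, each dyadic block $S_{j-3}f\cdot\Delta_{j}g$ has Fourier support in an annulus of radius $\sim 2^{j}$, so $|D|^{s}$ acts essentially as multiplication by $2^{js}$ on that block. Using the Littlewood-Paley characterization $\|h\|_{\dot{H}^{s}_{r}}\approx\bigl\|(\sum_{j}4^{js}|\Delta_{j}h|^{2})^{1/2}\bigr\|_{L^{r}}$, the pointwise bound $|S_{j-3}f|\lesssim Mf$ with $M$ the Hardy-Littlewood maximal operator, H\"older's inequality with the split $1/r=1/q_{1}+1/q_{2}$, the Fefferman-Stein vector-valued maximal theorem, and the square-function equivalence for $\dot{H}^{s}_{q_{2}}$, I obtain
\begin{equation*}
\|\Pi_{1}(f,g)\|_{\dot{H}^{s}_{r}}\;\lesssim\;\|Mf\|_{L^{q_{1}}}\,\Bigl\|\bigl(\textstyle\sum_{j}4^{js}|\Delta_{j}g|^{2}\bigr)^{1/2}\Bigr\|_{L^{q_{2}}}\;\lesssim\;\|f\|_{L^{q_{1}}}\|g\|_{\dot{H}^{s}_{q_{2}}}.
\end{equation*}
The same argument with the roles of $f,g$ swapped and with the split $1/r=1/p_{1}+1/p_{2}$ yields $\|\Pi_{2}(f,g)\|_{\dot{H}^{s}_{r}}\lesssim\|f\|_{\dot{H}^{s}_{p_{1}}}\|g\|_{L^{p_{2}}}$.

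The main obstacle is the resonant piece $\Pi_{3}$, because the Fourier supports of $\Delta_{j}f\cdot\Delta_{k}g$ with $|j-k|\leqslant 2$ now overlap near the origin, and $|D|^{s}$ no longer localizes on a single block. I would handle this by inserting a further dyadic cut $|D|^{s}\Pi_{3}(f,g)=\sum_{\ell}\Delta_{\ell}(|D|^{s}\Pi_{3}(f,g))$ and exploiting the support information $\operatorname{supp}\widehat{\Delta_{j}f\cdot\Delta_{k}g}\subset\{|\xi|\lesssim 2^{\max(j,k)+C}\}$, which forces only $\ell\leqslant\max(j,k)+C$ to contribute. Absorbing the gain $2^{\ell s}\lesssim 2^{js}$ turns the $\ell$-summation into a geometric tail, so one is left with a quantity of the form $\bigl\|\sum_{j}2^{js}|\Delta_{j}f|\,|\Delta_{j}g|\bigr\|_{L^{r}}$, which by Cauchy-Schwarz in $j$, H\"older in $L^{r}$, and the Fefferman-Stein theorem is controlled by either $\|f\|_{\dot{H}^{s}_{p_{1}}}\|g\|_{L^{p_{2}}}$ or, symmetrically, $\|f\|_{L^{q_{1}}}\|g\|_{\dot{H}^{s}_{q_{2}}}$. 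The genuine endpoints (an $L^{\infty}$ factor, or $r\in\{1,\infty\}$) require replacing the Littlewood-Paley norm on that side with a $\mathrm{BMO}$-type or maximal-function surrogate; since the hypotheses allow $L^{\infty}$ only on the factor side, the bound $|S_{j}f|\lesssim Mf$ together with a duality argument against $\dot{H}^{-s}_{r'}$ closes the estimate. As this is a classical result, one may alternatively invoke it directly from Kato-Ponce (1988), Grafakos-Oh (2014), or Muscalu-Schlag (\emph{Classical and Multilinear Harmonic Analysis}, Vol.~II).
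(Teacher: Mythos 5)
Your proposal is correct in substance but takes a genuinely different route from the paper: the paper does not prove this proposition at all, it simply quotes it from \cite{Grafakos2014}, whereas you reconstruct the standard Kato--Ponce argument via Bony's paraproduct decomposition. Your handling of the two paraproducts $\Pi_1,\Pi_2$ (maximal-function bound on the low-frequency factor, square-function characterization of $\dot{H}^{s}_{q_2}$ on the high-frequency factor, H\"older with the two prescribed splittings of $1/r$, Fefferman--Stein) and of the resonant piece $\Pi_3$ (extra frequency localization plus the geometric summation $\sum_{\ell\leqslant j+C}2^{\ell s}\lesssim 2^{js}$, which is exactly where $s>0$ enters) is the standard and correct skeleton; it also makes visible why the product norm splits into the two stated terms, which the bare citation hides. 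The one place your sketch is genuinely incomplete is the endpoint range the statement formally allows: for $r=1$ or $r=\infty$, and for an $L^{\infty}$ factor, the Littlewood--Paley characterization of $\dot{H}^{s}_{r}$ and the vector-valued maximal inequality both fail, and the fixes you gesture at (BMO surrogates, duality against $\dot{H}^{-s}_{r'}$) are nontrivial --- these cases are precisely the content of the endpoint literature you cite. Since the paper only ever invokes the proposition with $r=2$ and all exponents in $(1,\infty)$ (in the proof of Theorem \ref{theorem3.1}), your core argument covers every actual application; for the proposition in its stated generality the honest move is the one the paper makes, namely citing \cite{Grafakos2014}. (Incidentally, the hypothesis on $g$ should read $g\in\dot{H}^{s}_{q_2}(\mb{R}^n)\cap L^{p_2}(\mb{R}^n)$ to match the right-hand side; your argument implicitly uses the corrected version.)
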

The proof of the above inequality can be found in \cite{Grafakos2014}.
\begin{prop}[Fractional Chain Rule]\label{fractionalchainrule}
	Let $s>0$, $p>\lceil s\rceil$ and $1<r,\,r_1,\,r_2<\infty$ satisfying
	\begin{equation*}
	\frac{1}{r}=\frac{p-1}{r_1}+\frac{1}{r_2}.
	\end{equation*}
	Then, it holds for $f\in\dot{H}^{s}_{r_2}(\mb{R}^n)\cap L^{r_1}(\mb{R}^n)$
	\begin{equation*}
	\|\pm f|f|^{p-1}\|_{\dot{H}^{s}_{r}(\mb{R}^n)}+\|\,|f|^p\|_{\dot{H}^{s}_{r}(\mb{R}^n)}\lesssim\|f\|_{L^{r_1}(\mb{R}^n)}^{p-1}\,\|f\|_{\dot{H}^{s}_{r_2}(\mb{R}^n)}.
	\end{equation*}
\end{prop}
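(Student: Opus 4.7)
The plan is to reduce the claim to a base case of small fractional index and then bootstrap by induction. Write $s=k+\sigma$ with $k=\lfloor s\rfloor\in\mathbb{N}_0$ and $\sigma\in[0,1)$. The hypothesis $p>\lceil s\rceil$ guarantees that $F(z)=|z|^p$ is of class $C^{\lceil s\rceil}$ on $\mathbb{R}$, with pointwise growth $|F^{(j)}(z)|\lesssim|z|^{p-j}$ for $0\leqslant j\leqslant k$ and a H\"{o}lder bound for $F^{(k)}$ of order $\sigma$ when $\sigma>0$; the same holds verbatim for the related nonlinearity $z\mapsto z|z|^{p-1}$, so it suffices to estimate $\|\,|f|^p\|_{\dot{H}^s_r}$.

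For the base case $k=0$, $\sigma\in(0,1)$, I would employ the equivalence of $\dot{H}^\sigma_r(\mb{R}^n)$ (for $1<r<\infty$) with the Slobodeckij seminorm
\begin{equation*}
\|g\|_{\dot{W}^{\sigma,r}(\mb{R}^n)}^r:=\int_{\mb{R}^n}\int_{\mb{R}^n}\frac{|g(x)-g(y)|^r}{|x-y|^{n+\sigma r}}\mathrm{d}x\mathrm{d}y,
\end{equation*}
together with the pointwise inequality $\bigl|\,|f(x)|^p-|f(y)|^p\bigr|\leqslant p\,|f(x)-f(y)|\bigl(|f(x)|^{p-1}+|f(y)|^{p-1}\bigr)$, valid for $p\geqslant1$. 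After symmetrization in $x,y$, matters reduce to controlling
$\int\!\!\int\frac{|f(x)-f(y)|^r|f(x)|^{(p-1)r}}{|x-y|^{n+\sigma r}}\mathrm{d}x\mathrm{d}y$
by $\|f\|_{L^{r_1}}^{(p-1)r}\|f\|_{\dot{W}^{\sigma,r_2}}^r$. The exponent identity $\tfrac{1}{r}=\tfrac{p-1}{r_1}+\tfrac{1}{r_2}$ becomes $\tfrac{(p-1)r}{r_1}+\tfrac{r}{r_2}=1$, and the paired kernel identity $\tfrac{n}{r}+\sigma=\tfrac{n(p-1)}{r_1}+\bigl(\tfrac{n}{r_2}+\sigma\bigr)$ is what permits the weight $|x-y|^{-(n+\sigma r)}$ to be split compatibly with a joint H\"older inequality with exponents $\bigl(\tfrac{r_1}{(p-1)r},\tfrac{r_2}{r}\bigr)$ applied to the two-variable measure $\mathrm{d}x\mathrm{d}y$.

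For the inductive step $k\geqslant 1$, apply $\partial_j$ via the classical pointwise chain rule, $\partial_j(|f|^p)=pf|f|^{p-2}\partial_j f$, and reduce to
\begin{equation*}
\bigl\|\,|f|^p\bigr\|_{\dot{H}^s_r(\mb{R}^n)}\lesssim\sum_{j=1}^n\bigl\|f|f|^{p-2}\partial_j f\bigr\|_{\dot{H}^{s-1}_r(\mb{R}^n)}.
\end{equation*}
Apply the fractional Leibniz rule from Proposition \ref{fractionleibnizrule} to the product and choose intermediate exponents so that the fractional Gagliardo-Nirenberg inequality from Proposition \ref{fractionalgagliardonirenbergineq} recombines every $L^a$ norm and every $\dot{H}^{s-1}_b$ norm of $f$ or $\partial_j f$ into the target pair $\|f\|_{L^{r_1}}$ and $\|f\|_{\dot{H}^s_{r_2}}$. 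For the factor $\|f|f|^{p-2}\|_{\dot{H}^{s-1}_c}$ invoke the inductive hypothesis at level $s-1$ with exponent $p-1$, which remains admissible since $p-1>\lceil s-1\rceil$. Scale invariance under $f\mapsto f(\lambda\cdot)$ pins down a unique consistent choice of intermediate exponents and serves as a bookkeeping check.

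The main obstacle is the base case: the double-H\"older splitting hides a nontrivial technical point, because na\"ively integrating out $y$ first with a single-variable H\"older inequality produces a divergent kernel piece $\int|x-y|^{-n}\mathrm{d}y$. The correct resolution is to treat the $(x,y)$ H\"older inequality jointly, so that the factor $|f(x)|^{(p-1)r}$ is integrated in $x$ against the marginal of the kernel absorbed with it to reconstitute $\|f\|_{L^{r_1}}^{r_1}$, while the remaining factor $|f(x)-f(y)|^{r_2}|x-y|^{-(n+\sigma r_2)}$ reconstitutes $\|f\|_{\dot{W}^{\sigma,r_2}}^{r_2}$ once the two kernel-exponent identities above are invoked. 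An arguably cleaner alternative passes through the Triebel-Lizorkin characterization $\dot{H}^\sigma_r=\dot{F}^\sigma_{r,2}$ with paraproducts, as in the presentation of \cite{Grafakos2014}; either route, the base case is the technical heart of the proof, while the Leibniz-based bootstrap for $k\geqslant 1$ is routine.
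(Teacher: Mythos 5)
The paper does not actually prove this proposition: it is quoted as a known tool, with the proof deferred to \cite{PalmieriReissig2018}, where it is established by Littlewood--Paley/paraproduct techniques. So the question is whether your self-contained argument is sound, and the base case is not, for two separate reasons. First, the claimed equivalence of $\dot{H}^{\sigma}_r(\mb{R}^n)$ with the Slobodeckij seminorm is false for $r\neq 2$: one has $\dot{H}^{\sigma}_r=\dot{F}^{\sigma}_{r,2}$, whereas the Gagliardo--Slobodeckij seminorm characterizes $\dot{F}^{\sigma}_{r,r}=\dot{B}^{\sigma}_{r,r}$, and these coincide only when $r=2$. For $r>2$ the embedding goes the wrong way, $\|g\|_{\dot{W}^{\sigma,r}}\lesssim\|g\|_{\dot{H}^{\sigma}_r}$, so bounding the Slobodeckij seminorm of $|f|^p$ does not bound its $\dot{H}^{\sigma}_r$ norm. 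This is not a corner case here: the paper applies the proposition with $r=q_4$, where $\frac{1}{2}=\frac{1}{q_3}+\frac{1}{q_4}$ forces $q_4>2$, in the proof of \eqref{Eq_LIP}.

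Second, even granting the Slobodeckij formulation, the joint H\"older inequality you propose does not remove the divergence you yourself diagnose. Requiring the second factor to reproduce $\|f\|_{\dot{W}^{\sigma,r_2}}^{r_2}$ forces the kernel paired with $|f(x)|^{r_1}$ to be exactly $|x-y|^{-n}$, and then
\begin{align*}
\int_{\mb{R}^n}\int_{\mb{R}^n}|f(x)|^{r_1}|x-y|^{-n}\,\mathrm{d}x\,\mathrm{d}y=\|f\|_{L^{r_1}(\mb{R}^n)}^{r_1}\int_{\mb{R}^n}|z|^{-n}\,\mathrm{d}z=\infty.
\end{align*}
H\"older on the product measure $\mathrm{d}x\,\mathrm{d}y$ yields precisely these two double integrals, so the divergence is intrinsic to the splitting, not an artifact of integrating out $y$ first; "treating the inequality jointly" is not a fix. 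This is exactly why the standard proofs (Christ--Weinstein, Taylor, Runst--Sickel, and \cite{PalmieriReissig2018}) pass through the $\dot{F}^{\sigma}_{r,2}$ characterization and paraproducts. Your inductive step for $k\geqslant 1$ (differentiate once, apply Proposition \ref{fractionleibnizrule} and Proposition \ref{fractionalgagliardonirenbergineq}, induct on $f|f|^{p-2}$) is a plausible outline modulo exponent bookkeeping, but it rests entirely on the base case; what you relegate to a "cleaner alternative" is in fact the only workable route and would have to be carried out in full.
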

We can find the proof of this proposition in \cite{PalmieriReissig2018}.

\section*{Acknowledgments}
The Ph.D. study of Wenhui Chen are supported by S\"achsisches Landesgraduiertenstipendium. This work was partially written while Wenhui Chen was a Ph.D. student at TU Freiberg.

\end{document}